\definecolor{dark-red}{rgb}{0.5,0.15,0.15}
\definecolor{dark-blue}{rgb}{0.15,0.15,0.6}
\definecolor{dark-green}{rgb}{0.15,0.6,0.15}
\newtheorem{theorem}{Theorem}[section]
\newtheorem{lemma}[theorem]{Lemma}
\newtheorem{proposition}[theorem]{Proposition}
\newtheorem{corollary}[theorem]{Corollary}
\theoremstyle{definition}
\newtheorem{definition}[theorem]{Definition}
\newtheorem{remark}[theorem]{Remark}
\numberwithin{equation}{section}
\newcommand{\loc}{\textup{Loc}}
\newcommand{\locm}{\textup{Loc}_{\otimes}}
\newcommand{\colocm}{\textup{Coloc}_{\textup{hom}}}
\newcommand{\lhf}{\textup{LH}\mathfrak{F}}
\newcommand{\tacstab}{\widetilde{\textup{Mod}}}
\newcommand{\tachom}{\widetilde{\textup{Hom}}}
  \title{The Balmer spectrum and telescope conjecture for infinite groups}
\author{Gregory Kendall}
\begin{document}
\begin{abstract}
    We determine the Balmer spectrum of dualisable objects in the stable module category for $\mathrm{H}_1\mathfrak{F}$ groups of type $\mathrm{FP}_{\infty}$ and show that the telescope conjecture holds for these categories. We also determine the spectrum of dualisable objects for certain infinite free products of finite groups. Using this, we give examples where the stable category is not stratified by the spectrum of dualisable objects and where the telescope conjecture does not hold. 
\end{abstract}
\maketitle
\section{Introduction}\let\thefootnote\relax\footnote{This work was supported by the Additional Funding Programme for Mathematical Sciences, delivered by EPSRC (EP/V521917/1) and the Heilbronn Institute for Mathematical Research.}
Given an essentially small tensor-triangulated category, Balmer \cite{Balmspec} introduced the spectrum, which is a topological space classifying thick tensor ideals of the category. Many such categories arise as the compact part of a larger rigidly-compactly generated tensor-triangulated category; the key example for this work is the stable module category of a finite group, although many other examples exist throughout mathematics; see \cite{balm} for a nice overview. 
\par 
On the other hand, there is less known about the tensor triangular geometry of non-rigidly-compactly generated tensor-triangulated categories, and there have only been a few explicit calculations of the Balmer spectra of non-rigid categories in the literature, for example \cite{BCSspec, XuSpec}. One reason for this, perhaps, is that non-rigidly-compactly generated categories can be quite different from each other, making a unifying theory more difficult.
\par 
Even within the modular representation theory of infinite groups we find that a tensor-triangulated category can fail to be rigidly-compactly generated in different ways: there are groups, as in \cite[Section~5.1]{kendall2024stablemodulecategorymodel}, where the tensor unit fails to be compact; there are groups, for example $\textup{H}_1\mathfrak{F}$ groups of type $\textup{FP}_{\infty}$, such that every dualisable object is compact but not vice versa; and there are groups, such as those which we consider in \Cref{freeprodsec}, such that every compact object is dualisable but there exist dualisable objects which are not compact. For this reason, we must tackle non-rigidly-compactly generated categories on a case by case basis. 
\par 
We investigate the tensor triangular geometry of the stable module category for various infinite groups. We constructed a suitable stable module category in \cite{kendall2024stablemodulecategorymodel} for all $\lhf$ groups, extending the construction of Mazza and Symonds in \cite{MSstabcat} for groups of type $\Phi$. Furthermore, in \cite{kendall2024costratificationstablecategoriesinfinite} we classified the localising tensor ideals, and colocalising hom-closed subcategories.
\par 
We will focus on the subcategory of dualisable objects; the main reason for this is that this forms an essentially small tensor-triangulated category, whereas the compact objects may not be closed under the tensor product. Furthermore, the results of \cite[Section~10]{barthel2023cosupport} provide evidence that for perfectly generated non-closed tensor-triangulated categories the spectrum of the dualisable objects is important to the tensor triangular geometry of the whole category. 
\par 
For certain groups, we determine the Balmer spectrum of dualisable objects as follows.
\begin{theorem}(See \Cref{bspec2} and \Cref{specfortrees}) 
Let $k$ be a field of characteristic $p > 0$ and $G$ be an $\textup{H}_1\mathfrak{F}$ group of type $\textup{FP}_{\infty}$. The Balmer spectrum of $\tacstab(kG)^d$ is homeomorphic to $\textup{Proj}(H^*(G,k))$.
\par 
Let $\mathcal{H}$ be a finite collection of finite groups of $p$-rank one, and for each $n \in \mathbb{N}$ let $H_n \in\mathcal{H}$. Suppose $G = \bigast\limits_{n\in\mathbb{N}}H_n$. Then $\textup{Spc}(\tacstab(kG)^d)$ is homeomorphic to the Stone-\v{C}ech compactification of $\mathbb{N}$. 
\end{theorem}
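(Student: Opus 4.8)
The plan is to obtain both spectra from Balmer's comparison map. Writing $R^{\bullet}=\homm^{*}_{\tacstab(kG)}(k,k)\cong\widehat{H}^{*}(G,k)$ for the complete cohomology ring, which \cite{kendall2024stablemodulecategorymodel} identifies with the graded endomorphisms of the tensor unit, there is a natural continuous map $\rho\colon\textup{Spc}(\tacstab(kG)^{d})\to\textup{Spec}^{h}(R^{\bullet})$; since in both cases $\widehat{H}^{*}(G,k)$ agrees with ordinary cohomology in all large degrees, $\textup{Spec}^{h}(R^{\bullet})$ is canonically $\textup{Proj}(H^{*}(G,k))$ and the theorem asserts that $\rho$ is a homeomorphism. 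The common mechanism is: a thick $\otimes$-ideal of $\tacstab(kG)^{d}$ generates a localising $\otimes$-ideal of $\tacstab(kG)$, which by the (co)stratification of $\tacstab(kG)$ over $\textup{Spec}^{h}H^{*}(G,k)$ proved in \cite{kendall2024costratificationstablecategoriesinfinite} is determined by a subset of $\textup{Proj}(H^{*}(G,k))$, while conversely the dualisable objects of a localising $\otimes$-ideal form a thick $\otimes$-ideal; showing these are mutually inverse order isomorphisms onto the Thomason subsets, and feeding this into Balmer's reconstruction theorem, identifies $\textup{Spc}(\tacstab(kG)^{d})$ with $\textup{Proj}(H^{*}(G,k))$ compatibly with $\rho$. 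A subtlety present throughout is that here dualisable objects form a \emph{proper} subclass of the compact objects, so one must know that the dualisable members of a localising $\otimes$-ideal are thick-$\otimes$-ideal-dense in it; I expect this to follow from the methods of \cite{kendall2024costratificationstablecategoriesinfinite}.

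For an $\textup{H}_{1}\mathfrak{F}$ group $G$ of type $\textup{FP}_{\infty}$ the argument is comparatively clean, because $H^{*}(G,k)$ is Noetherian. Then Thomason and specialisation-closed subsets coincide; the unit $k$ is dualisable (this is the point at which $\textup{FP}_{\infty}$ is used); and for a closed set $V_{+}(\zeta_{1},\dots,\zeta_{r})$ the Koszul object $L_{\zeta_{1}}\otimes\dots\otimes L_{\zeta_{r}}$, built from $k$ by cones, is dualisable with support exactly that set, so by the local-to-global principle every localising $\otimes$-ideal is generated by its dualisable members. The resulting continuous bijection $\rho$ is then a homeomorphism since it is a spectral, support-preserving map.

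For $G=\bigast_{n}H_{n}$ the Noetherian hypothesis disappears and the target space changes character; this case carries the weight. The first ingredient is a cohomology computation via Bass--Serre theory: $G$ acts on the tree of the star-shaped graph of groups with peripheral vertex groups $H_{n}$ and trivial central vertex and edge groups, and since this tree is one-dimensional and contractible the isotropy spectral sequence gives $H^{i}(G,k)\cong\prod_{n}H^{i}(H_{n},k)$ for $i\geq 2$, hence $\widehat{H}^{*}(G,k)\cong\prod_{n}\widehat{H}^{*}(H_{n},k)$ in all large degrees. Each $H_{n}$ has $p$-rank one, so $\textup{Proj}(H^{*}(H_{n},k))$ is a non-empty finite discrete set $F_{n}$; using finiteness of $\mathcal{H}$ to choose parameters $\zeta_{n}\in\widehat{H}^{d}(H_{n},k)$ of a single common degree $d$ with the nilpotency indices and the finite dimensions $\dim_{k}\bigl(\widehat{H}^{*}(H_{n},k)[\zeta_{n}^{-1}]\bigr)_{0}$ bounded uniformly in $n$, one finds that localising the product at $\zeta:=(\zeta_{n})_{n}$ exhibits $\textup{Proj}\bigl(\prod_{n}H^{*}(H_{n},k)\bigr)$ as $\textup{Spec}\bigl(\prod_{n}A_{n}\bigr)$, where each $A_{n}$ is a finite reduced $k$-algebra with $\textup{Spec}(A_{n})=F_{n}$. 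This ring is von Neumann regular, so its spectrum is the Stone space of its Boolean algebra of idempotents $\prod_{n}2^{F_{n}}=\mathcal{P}\bigl(\bigsqcup_{n}F_{n}\bigr)\cong\mathcal{P}(\mathbb{N})$; thus the target of $\rho$ is the Stone space of $\mathcal{P}(\mathbb{N})$, which is $\beta\mathbb{N}$.

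It remains to show $\rho\colon\textup{Spc}(\tacstab(kG)^{d})\to\beta\mathbb{N}$ is bijective, after which it is automatically a homeomorphism: $\textup{Spc}$ is always quasi-compact and $\beta\mathbb{N}$ is Hausdorff, so this in particular recovers the a priori surprising fact that the spectrum is Hausdorff. Injectivity follows from the classification of thick $\otimes$-ideals just as above. Surjectivity is the main obstacle. The points of $\mathbb{N}=\bigsqcup_{n}F_{n}$ lie in the image because restriction $\tacstab(kG)^{d}\to\textup{stmod}(kH_{n})$ is a tensor-triangulated functor, inducing $\textup{Proj}(H^{*}(H_{n},k))=F_{n}\hookrightarrow\textup{Spc}(\tacstab(kG)^{d})$; but the non-principal ultrafilters of $\beta\mathbb{N}$ must be realised by hand. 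For this I would use the ring object $\Lambda:=k[\zeta^{-1}]$ of $\tacstab(kG)$ --- the mapping telescope of $\zeta\colon k\to\Omega^{-d}k$ --- whose degree-zero endomorphism ring is exactly the von Neumann regular ring $\prod_{n}A_{n}$: its orthogonal idempotents split $\Lambda$ into summands indexed by the subsets of $\mathbb{N}$, realising every clopen subset of $\beta\mathbb{N}$ as a support in $\tacstab(kG)$, and the task is to descend this to the dualisable subcategory so as to certify that no point of $\beta\mathbb{N}$ is omitted by $\rho$. Manufacturing, for each non-principal ultrafilter on $\mathbb{N}$, the dualisable --- and necessarily non-compact --- object of $\tacstab(kG)$ witnessing that point, which is precisely where the examples of \Cref{freeprodsec} come from, is the step I expect to be the most delicate.
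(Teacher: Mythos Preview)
Your approach for the $\textup{H}_1\mathfrak{F}$ case is broadly compatible with the paper's, and the use of Carlson--Koszul objects $L_{\zeta_1}\otimes\cdots\otimes L_{\zeta_r}$ to realise closed supports is exactly what the paper does. The gap you flag yourself---that dualisable objects of a localising $\otimes$-ideal are thick-$\otimes$-ideal-dense---is genuine and does \emph{not} follow from the stratification results of \cite{kendall2024costratificationstablecategoriesinfinite} alone. The paper settles it by a direct nilpotence argument: if $\textup{Thick}_\otimes(M{\downarrow}_E^G)=\textup{Thick}_\otimes(N{\downarrow}_E^G)$ for all elementary abelian $E$, then $(\xi_N^{\otimes m}\otimes X){\downarrow}_E^G=0$ for a single $m$ (using finiteness of conjugacy classes from Henn), whence $\xi_N^{\otimes l}\otimes X^{\otimes l}=0$ globally, so $X\in\textup{Thick}_\otimes(N)$ by Balmer's criterion. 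This replaces your appeal to stratification with an explicit computation.

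For the free product case your proposal has structural problems. First, a factual slip: here every compact object is dualisable but not conversely (the unit $k$ is dualisable and not compact), the opposite of what you assert. More seriously, your mechanism---thick ideal $\mapsto$ localising ideal $\mapsto$ subset of $\textup{Proj}(H^*(G,k))$---cannot yield a bijection onto Thomason subsets of $\beta\mathbb{N}$: the stratification of \cite{kendall2024costratificationstablecategoriesinfinite} is over $\bigsqcup_n\{\ast\}\cong\mathbb{N}$ with the discrete topology, so the intermediate set of localising ideals has cardinality only $2^{\aleph_0}$, and the paper proves explicitly (\Cref{NOSTRAT}) that the stable category is \emph{not} stratified by $\textup{Spc}(\tacstab(kG)^d)\cong\beta\mathbb{N}$. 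Consequently your plan to manufacture dualisable objects witnessing each non-principal ultrafilter is both unworkable and unnecessary.

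The paper instead works entirely at the lattice level: it shows that restriction induces a bijection $\textup{Thickid}^f(G^d)\to\prod_n\textup{Thickid}^f(H_n^d)$, and Stone duality then delivers the spectral coproduct $\bigsqcup_n\{\ast\}=\beta\mathbb{N}$ without ever touching individual ultrafilters. Surjectivity is easy (glue modules over the tree). The substance is injectivity, and here the $p$-rank one hypothesis and the finiteness of $\mathcal{H}$ enter in a way your outline misses: for any finite group $H$ of $p$-rank one there is a constant $C=C(H)$ with $\xi_M^{\otimes C}=0$ for every nonzero finitely generated $kH$-module $M$ (proved via $\otimes$-faithfulness in $\tacstab(kC_p)$ and the Benson--Carlson bound), so taking $C=\max_{H\in\mathcal{H}}C(H)$ gives a \emph{uniform} exponent across all $H_n$. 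Then $\textup{Thick}_\otimes(M{\downarrow}_{H_n}^G)=\textup{Thick}_\otimes(X{\downarrow}_{H_n}^G)$ for all $n$ forces $(\xi_M^{\otimes C}\otimes X){\downarrow}_{H_n}^G=0$ for all $n$ simultaneously, hence $\xi_M^{\otimes C}\otimes X=0$ globally and $X\in\textup{Thick}_\otimes(M)$. Without this uniformity the argument collapses---indeed the paper shows (\Cref{NOSPECFORTREES}) that for $p$-rank $\geq 2$ the lattice map fails to be injective.
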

The statement for $\textup{H}_1\mathfrak{F}$ group of type $\textup{FP}_{\infty}$ extends the result of Benson, Carlson, and Rickard \cite{bcrthick} for finite groups. The situation when we have an infinite free product of cyclic groups of prime order extends the fact that the spectrum of an infinite product of fields is the Stone-\v{C}ech compactification of $\mathbb{N}$. Indeed, in view of \cite{ttrum}, we should think of the stable module category for a cyclic group of prime order as a tensor triangular field, and by work of G\'{o}mez \cite[Theorem~3.3]{gomez2024picardgroupstablemodule}, we know that the stable category splits as an infinite product in this case.
\par 
For more general free products $G = \bigast\limits_{n\in\mathbb{N}}H_n$, restriction induces a map from the spectral coproduct $\bigsqcup\limits_{n\in\mathbb{N}}\textup{Spc}(\tacstab(kH_n)^d) \to \textup{Spc}(\tacstab(kG)^d)$. We show in \Cref{NOSPECFORTREES} that if each $H_n$ has $p$-rank at least two, then this map will not be an epimorphism. In particular, the restriction maps to the finite elementary abelian subgroups form a collection of jointly conservative functors, which do not induce a jointly surjective map on the Balmer spectrum; comparing this to \cite{surjinttgeom} we see that non-rigidly-compactly generated categories can exhibit quite different behaviours to rigidly-compactly generated categories. 
\par 
In fact, as we note in \Cref{remarkprod}, this is really a general behaviour for infinite products of rigidly-compactly generated tensor-triangulated categories, under some additional conditions. 
\par 
In \cite[Section~10]{barthel2023cosupport}, it is shown that a theory of support and stratification may be defined for perfectly generated non-closed tensor-triangulated categories, extending the rigidly-compactly generated case. Using our calculation of the spectrum, and classification of localising tensor ideals from \cite{kendall2024costratificationstablecategoriesinfinite}, we give an example where the stable category is not stratified by the spectrum of dualisable objects, in the sense of \cite[Definition~10.30]{barthel2023cosupport}. 
\begin{theorem}(See \Cref{NOSTRAT})
    The stable category is not stratified by the Balmer spectrum of dualisable objects when $G$ is an infinite free product of cyclic groups. 
\end{theorem}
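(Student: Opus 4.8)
The plan is to derive a contradiction with \cite[Definition~10.30]{barthel2023cosupport} by confronting the homeomorphism $\mathrm{Spc}(\tacstab(kG)^d)\cong\beta\mathbb{N}$ from the theorem above with the classification of localising tensor ideals of $\tacstab(kG)$ obtained in \cite{kendall2024costratificationstablecategoriesinfinite}: the spectrum is enormous and badly behaved (it is very far from weakly noetherian), whereas the lattice of localising tensor ideals is only as large as $\mathcal{P}(\mathbb{N})$, and no support theory can reconcile the two. Writing $G=\bigast_{n\in\mathbb{N}}H_n$ with each $H_n$ cyclic of $p$-rank one, so that each $\tacstab(kH_n)$ is a minimal tensor-triangulated category, the first step is to pin down the relevant supports on both sides.

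On the spectrum side, inside $\beta\mathbb{N}$ the canonical copy of $\mathbb{N}$ consists of the isolated (clopen) points, which are the images of the spectra $\mathrm{Spc}(\tacstab(kH_n)^d)$ under restriction; since $\beta\mathbb{N}$ is the Stone space of $\mathcal{P}(\mathbb{N})$, its quasi-compact open subsets — hence the supports of dualisable objects — are exactly the clopen subsets $\overline{A}$ for $A\subseteq\mathbb{N}$, so that no singleton $\{\mathfrak{p}\}$ at a point of the corona $\beta\mathbb{N}\setminus\mathbb{N}$, and not even the dense, non-closed set $\mathbb{N}$ itself, is a dualisable support. The support in the sense of \cite[\S 10]{barthel2023cosupport} of a dualisable object agrees with its Balmer support, so the support of the unit $k$ is all of $\mathrm{Spc}(\tacstab(kG)^d)=\beta\mathbb{N}$. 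On the classification side, each localising tensor ideal of $\tacstab(kG)$ is of the form $\mathcal{L}_A$ for a unique $A\subseteq\mathbb{N}$, where $\mathcal{L}_A=\locm(\{\iota_n(k):n\in A\})$ is the ideal generated by the dualisable idempotents $\iota_n(k)$ cut out by restriction to the $H_n$ — here minimality of each $\tacstab(kH_n)$ is what forces the index set to be a subset of $\mathbb{N}$ rather than something larger. A short computation with the idempotent $\mathbf 1_A:=\bigoplus_{n\in A}\iota_n(k)$, whose Balmer support is $\overline{A}$, then identifies $\mathrm{supp}(\mathcal{L}_A)=\overline{A}$, so that $\mathcal{L}\mapsto\mathrm{supp}(\mathcal{L})$ is an order isomorphism from the localising tensor ideals of $\tacstab(kG)$ onto the clopen subsets of $\beta\mathbb{N}$.

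Finally, if $\tacstab(kG)$ were stratified by $\mathrm{Spc}(\tacstab(kG)^d)$ in the sense of \cite[Definition~10.30]{barthel2023cosupport}, then $\mathcal{L}\mapsto\mathrm{supp}(\mathcal{L})$ would be a bijection between localising tensor ideals and \emph{all} subsets of $\mathrm{supp}(k)=\beta\mathbb{N}$; in particular the non-clopen set $\mathbb{N}$ (the union of the supports of the $\iota_n(k)$) and each singleton at a corona point would be realised as the support of a localising tensor ideal, and, crudely, there would be at least $|\beta\mathbb{N}|=2^{2^{\aleph_0}}$ localising tensor ideals rather than the $2^{\aleph_0}$ the classification allows — a contradiction, proving the theorem. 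I expect the only real work, and the main obstacle, to be the two support computations above: precisely because $\beta\mathbb{N}$ is not weakly noetherian and its corona points are not visible, one must verify carefully inside the framework of \cite[\S 10]{barthel2023cosupport} that $\mathrm{supp}(k)$ is genuinely all of $\beta\mathbb{N}$ and not merely its visible part $\mathbb{N}$ (which would be consistent with stratification), and that $\mathrm{supp}(\mathcal{L}_A)=\overline{A}$ on the nose; it is at exactly this point that the gap between the countable lattice of ideals and the vast spectrum becomes a genuine failure of stratification.
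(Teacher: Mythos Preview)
Your core argument---the cardinality mismatch between the set of localising tensor ideals (which has cardinality $2^{\aleph_0}$, by the classification from \cite{kendall2024costratificationstablecategoriesinfinite}) and the set of subsets of $\beta\mathbb{N}$ (which has cardinality at least $2^{2^{\aleph_0}}$)---is exactly the paper's argument, and it is correct. But you have introduced a significant factual error that creates a phantom obstacle.

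You assert that $\beta\mathbb{N}$ is ``very far from weakly noetherian'' and that the corona points are ``not visible.'' This is false: $\beta\mathbb{N}$ is compact Hausdorff, hence $T_1$, and every $T_1$ spectral space is weakly noetherian (see \cite[Remark~2.4]{BHSstrat}, as the paper itself cites). In particular every point of $\beta\mathbb{N}$, corona or not, is closed and therefore visible in the sense of \cite{barthel2023cosupport}. So the Balmer--Favi support framework of \cite[\S10]{barthel2023cosupport} applies on the nose, $\mathrm{supp}(k)=\beta\mathbb{N}$ automatically, and the ``main obstacle'' you flag simply evaporates. The paper dispatches this in a preliminary lemma before the theorem.

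Once that is corrected, everything else you propose---the identification $\mathrm{supp}(\mathcal{L}_A)=\overline{A}$, the analysis of clopen sets---is unnecessary. The bare cardinality count already gives the contradiction: stratification demands a bijection between localising tensor ideals and subsets of $\mathrm{Spc}(\tacstab(kG)^d)=\beta\mathbb{N}$, and no such bijection exists.
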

More generally, we show in \Cref{NOSPEC22} that if $\mathcal{T} = \prod\limits_{n\in\mathbb{N}}\mathcal{T}_n$, where each $\mathcal{T}_n$ is a tensor-triangular field, then $\mathcal{T}$ is not stratified by the Balmer spectrum of dualisable objects. 
\par 
We also consider smashing subcategories and the telescope conjecture for the stable categories, following Balmer and Favi \cite{BalmFavi}. The fact that the stable module category is not rigidly-compactly generated in general means we have to adapt the definition as follows. 
\par Recall that we say a localisation functor $L$ is smashing if $L$ is equivalent to $L(\mathbbm{1}) \otimes -$, where $\mathbbm{1}$ is the tensor unit. We note that for rigidly-compactly generated tensor-triangulated categories, this is equivalent to the statement that $L$ preserves coproducts, see \cite[Definition~3.3.2]{HPSasht}, although it is not clear if these are equivalent in general.
\par  
Smashing subcategories arise as kernels of smashing localisations. As we show in \Cref{tel2}, dualisable objects generate smashing tensor ideals, giving a well-defined map between the poset of thick tensor ideals of dualisable objects to the poset of smashing tensor ideals of the stable category. We say that the telescope conjecture holds if this map is a bijection. This extends Balmer and Favi's tensor-triangulated telescope conjecture to the not necessarily rigidly-compactly generated case.   
\par 
For finite groups, Benson, Iyengar, and Krause have shown that the telescope conjecture holds for the stable category \cite[Theorem~11.12]{repstrat}. For infinite groups, we show the following. 
\begin{theorem}(See \Cref{NOTELHERE}, \Cref{notelex} and \Cref{TELHERE}) 
    There exist $\textup{H}_1\mathfrak{F}$ groups such that the telescope conjecture does not hold for the stable category. However, if $G$ is $\textup{H}_1\mathfrak{F}$ and of type $\textup{FP}_{\infty}$, then the telescope conjecture does hold for the stable category.
\end{theorem}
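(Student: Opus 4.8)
The plan is to establish the two assertions by rather different routes: the failure of the telescope conjecture is a cardinality obstruction arising from infinite products, while the positive statement will reduce, via the stratification of \cite{kendall2024costratificationstablecategoriesinfinite} and the identification of the Balmer spectrum in the first theorem, to an adaptation of the finite-group argument of Benson, Iyengar and Krause.

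For the negative direction I would take $G = \bigast_{n\in\mathbb{N}} C_p$, which is $\hf$ --- it acts on its Bass--Serre tree with finite stabilisers --- but is not of type $\fpinf$. By G\'{o}mez's splitting theorem, together with the identifications used to prove that $\textup{Spc}(\tacstab(kG)^d)\cong\beta\mathbb{N}$, we have an equivalence $\tacstab(kG)\simeq\prod_{n\in\mathbb{N}}\mathcal{T}_n$ of tensor-triangulated categories, with each $\mathcal{T}_n\simeq\tacstab(kC_p)$ a tensor-triangular field and so having only the two trivial localising tensor ideals. The first step is to classify the localising tensor ideals of such a product: projecting to each factor shows that a localising tensor ideal $\mathcal{L}$ is contained in the ideal $\mathcal{I}_S$ of sequences supported on some $S\subseteq\mathbb{N}$, and tensoring a witness by the idempotent unit $e_{\{n\}}$ concentrated at $n$ recovers the whole $n$-th factor; since $e_S$ is the coproduct of the $e_{\{n\}}$ with $n\in S$, it follows that $e_S\in\mathcal{L}$ and hence $\mathcal{L}=\mathcal{I}_S$. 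Each $\mathcal{I}_S$ is smashing, being the kernel of $e_{S^{c}}\otimes-$, so there are exactly $2^{\aleph_0}$ smashing tensor ideals. On the other hand $\beta\mathbb{N}$ is a Stone space, so its Thomason subsets are precisely its open subsets, of which there are $2^{2^{\aleph_0}}$ --- for instance the complement of any point is a union of basic clopen sets, hence Thomason. By Balmer's classification there are therefore $2^{2^{\aleph_0}}$ thick tensor ideals of $\tacstab(kG)^d$, so the comparison map $\mathcal{J}\mapsto\locm(\mathcal{J})$ cannot be injective and the telescope conjecture fails. One even has an explicit failure of injectivity: the thick tensor ideal of dualisable objects supported on finitely many factors and the ideal of all dualisable objects both generate the whole stable category, since $\mathbbm{1}$ is the telescope $\textup{cofib}\big(\bigoplus_n e_{\{1,\dots,n\}}\to\bigoplus_n e_{\{1,\dots,n\}}\big)$ of finitely-supported dualisable objects. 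This yields \Cref{NOTELHERE} and the example \Cref{notelex}.

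For the positive direction, let $G$ be $\hf$ of type $\fpinf$. I would use that $\tacstab(kG)$ is compactly generated, that $H^*(G,k)$ is Noetherian so that $\textup{Proj}(H^*(G,k))$ is a Noetherian space, and that by \cite{kendall2024costratificationstablecategoriesinfinite} the category is stratified, with the localising tensor ideals in bijection --- via the support for the $H^*(G,k)$-action --- with the subsets of $\textup{Proj}(H^*(G,k))$, which the first theorem identifies with $\textup{Spc}(\tacstab(kG)^d)$. Injectivity of the comparison map is then formal: a thick tensor ideal of $\tacstab(kG)^d$ is recovered from its Balmer support, a specialisation-closed subset; $\locm$ preserves supports; and stratification recovers a localising tensor ideal from its support. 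Hence the real content is surjectivity onto the smashing tensor ideals, which amounts to proving that every smashing tensor ideal $\mathcal{S}$ has specialisation-closed support $W$: granted this, Balmer's classification supplies a thick tensor ideal $\mathcal{J}$ of $\tacstab(kG)^d$ with Balmer support $W$, and then $\locm(\mathcal{J})$ and $\mathcal{S}$ share the support $W$ and so coincide by stratification.

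The main obstacle is exactly this claim --- that a smashing tensor ideal has specialisation-closed support --- which is already the crux of the telescope conjecture for finite groups. I would adapt the argument of \cite[Theorem~11.12]{repstrat}: writing $\mathcal{S}=\ker(L)$ with $L\simeq L\mathbbm{1}\otimes-$ and $L\mathbbm{1}$ idempotent, the local-to-global principle of \cite{kendall2024costratificationstablecategoriesinfinite} reduces the statement to working one prime at a time; at each prime $\mathfrak{p}$ the local category is minimal by stratification, and Noetherianity of $H^*(G,k)$ is used to propagate membership of $\mathcal{S}$ from $\mathfrak{p}$ to the generic points of $V(\mathfrak{p})$, forcing $V(\mathfrak{p})\subseteq W$. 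The extra difficulty compared with the finite-group case is that $\tacstab(kG)$ is not rigidly-compactly generated: the dualisable objects form a proper subcategory of the compact objects, so one must work throughout with the definition of smashing as $L\simeq L\mathbbm{1}\otimes-$ and check that the dualisable objects alone, rather than all compact objects, realise every specialisation-closed subset of $\textup{Proj}(H^*(G,k))$ as a support --- which is precisely what the computation of $\textup{Spc}(\tacstab(kG)^d)$ provides. It is conceivable that the argument can be packaged as an instance of a general ``stratified plus Noetherian spectrum implies telescope'' principle in the non-closed framework of \cite[Section~10]{barthel2023cosupport}; in any case, the specialisation-closedness of smashing supports is the substantive step, after which the comparison map is a bijection, giving \Cref{TELHERE}.
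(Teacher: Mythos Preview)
Your negative direction is correct and arguably cleaner than the paper's. The paper's \Cref{notele} exhibits two explicit thick tensor ideals of dualisables --- $\textup{Thick}_\otimes(\{k{\uparrow}_{H_n}^G\})$ and $\textup{Thick}_\otimes(k)$ --- that generate the same smashing ideal; your explicit example is essentially this. Your cardinality argument is also valid, and the paper in fact runs the same $2^{\aleph_0}$ versus $2^{2^{\aleph_0}}$ comparison, but for a different purpose (non-stratification, \Cref{NOSTRAT}). Note that \Cref{notelex} is a separate example with amalgamation, designed so that the compacts are not even closed under tensor; you do not address it, but the theorem as stated only requires one counterexample.

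For the positive direction your outline is right --- injectivity is formal from the spectrum computation, and surjectivity reduces to showing that every smashing ideal has specialisation-closed support --- but the proposed method for that key step has a genuine gap. The argument of \cite[Theorem~11.12]{repstrat} rests on an action of $H^*(G,k)$ on the triangulated category, Koszul objects, and a local-to-global principle for that action. None of this machinery is set up for the stable category of an infinite group in \cite{kendall2024costratificationstablecategoriesinfinite}: the support there is defined indirectly via restriction to finite subgroups, not via a ring action, and there is no local-to-global principle of the required shape available to invoke. Saying ``adapt BIK'' does not engage with this, and it is not clear the adaptation goes through.

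The paper's route is quite different and sidesteps this issue. The key technical lemma (\Cref{smpr}) is that if $\mathcal{S}$ is smashing then so is $\textup{Ind}^{-1}_{G,E}(\mathcal{S})$ for every finite elementary abelian $E\leq G$; this is proved by observing that $\mathcal{S}^\perp$ is simultaneously a localising tensor ideal and a colocalising hom-closed subcategory, whence $\textup{Ind}^{-1}_{G,E}(\mathcal{S}^\perp)=\textup{Coind}^{-1}_{G,E}(\mathcal{S}^\perp)=(\textup{Ind}^{-1}_{G,E}(\mathcal{S}))^\perp$. One then applies the telescope conjecture for the finite group $E$ to get compact generators there, and inducing them up yields generators of $\mathcal{S}$ of the form $M{\uparrow}_E^G$ with $M$ finitely generated (\Cref{smagenco}); these have closed support, so $\mathcal{S}$ has specialisation-closed support. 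Under the $\fpinf$ hypothesis, Carlson-type modules (\Cref{carlsonzeta}) realise every closed set as the support of a dualisable object, and the conclusion follows. So the specialisation-closedness you correctly identify as the crux is obtained not by running BIK's argument on $G$, but by descending to finite subgroups where it is already known and reassembling.
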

The counterexamples to the telescope conjecture are given by infinite free products of groups; see \Cref{freeprodsec}.
\section{Background on tensor triangular geometry}
We begin by recalling all the relevant background. Let $\mathscr{T}$ be a tensor-triangulated category. We say a full subcategory $\mathcal{S}$ is thick if it is triangulated and closed under summands and finite sums; it is tensor ideal if it is closed under tensoring with any object in $\mathscr{T}$; and it is radical if $\mathcal{S} = \{X \in \mathscr{T}\ |\ X^{\otimes n}\in\mathscr{S}\textup{ for some }n \geq 1\}$. Note that when every object of $\mathscr{T}$ is dualisable then each thick tensor ideal is automatically radical, see \cite[Remark~4.3]{Balmspec}.
\par 
If $\mathscr{T}$ has infinite coproducts, we say a triangulated subcategory is localising if it is closed under infinite coproducts. 
\par 
For any object $X \in \mathscr{T}$, we use $\textup{Thick}_{\otimes}(X)$ to denote the smallest thick tensor ideal containing $X$, and similarly for $\locm(X)$. We will also use $\loc(X)$ to denote the smallest localising subcategory containing $X$, which will in general not be the smallest localising tensor ideal containing $X$. 
\par 
We also have the dual notions when there exists products and an internal hom. A subcategory $\mathcal{S}$ is colocalising if it is triangulated and closed under products, and it is hom-closed if for all $X \in \mathcal{S}$ we have $\textup{hom}(Y,X) \in \mathcal{S}$ for any $Y \in \mathscr{T}$, where $\textup{hom}(-,-)$ denotes the internal hom. The notation $\colocm(X)$ denotes the smallest colocalising hom-closed subcategory containing $X$. 
\par
An object $X$ is called rigid or strongly dualisable if the natural map $\textup{hom}(X,\mathbbm{1})\otimes Y \to \textup{hom}(X,Y)$ is an isomorphism for any $Y$. Since we will not consider any of form of dualisability, we will generally just refer to such objects as dualisable. 
\par 
We let $X^*=\textup{hom}(X,\mathbbm{1})$ be the dual of $X$, and note that if $X$ is dualisable then $-\otimes X$ is left adjoint to $X^*\otimes -$. 
\par
We also need to recall some facts about the Balmer spectrum. We approach this lattice theoretically as in \cite{KPLat} rather than the original exposition in \cite{Balmspec}. We begin with the relevant background on Stone duality. 
\par 
 A poset $(P, \leq)$ is called a lattice if all non-empty finite joins and non-empty finite meets exist; we will denote the join and meet by $\vee$ and $\wedge$ respectively. We will also assume that $P$ has a greatest and least element. A lattice is distributive if it satisfies $a \wedge (b \vee c) = (a \wedge b)\vee(a\wedge c)$ for all $a,b,c \in P$. 
 \par
 Furthermore, we say that $P$ is a frame if all (small) joins exist and the following distributivity law is satisfied for any set $I$ and any $x, \{y_i\}_{i \in I}$:
\begin{equation}
    x \wedge(\bigvee\limits_{i \in I}y_i) \leq \bigvee\limits_{i \in I}(x \wedge y_i)
\end{equation}
A frame homomorphism is then a poset homomorphism which preserves finite meets and arbitrary joins. 
\par 
When $P$ is a frame, we say an element $a \in P$ is finite if whenever $a \leq \bigvee\limits_{s \in S}s$ for any subset $S \subseteq P$, there exists a finite subset $K \subseteq S$ such that $a \leq \bigvee\limits_{k\in K}k$. We say $P$ is a coherent frame if every element can be expressed as a join of finite elements and the finite elements form a distributive lattice, which we will denote $P^f$. 
\par 
Spectral spaces are topological spaces which are homeomorphic to the spectrum of a commutative ring. These were described Hochster \cite{Hochster} as those spaces which are sober, quasi-compact, and such that the quasi-compact open subsets are closed under finite intersections and form a basis for the topology. A spectral map between spectral spaces is a continuous map such that the inverse image of a quasi-compact open subset is quasi-compact. 
\par 
We let $\mathbf{Spec}$ denote the category of spectral spaces with spectral maps, $\mathbf{DLat}$ denote the category of distributive lattices with lattice morphisms, and $\mathbf{CFrm}$ denote the category of coherent frames with frame homomorphisms. 
\begin{theorem}\label{Stonedual}(See, e.g., \cite[p.7]{BConlat}, \cite[Theorem~3.8]{barthel2023descenttensortriangulargeometry}) 
    The functor $\mathbf{CFrm} \to \mathbf{DLat}$ sending a coherent frame to its distributive lattice of finite elements is an equivalence.
    \par 
    The functor $\mathbf{Spec}^{op} \to \mathbf{CFrm}$ sending a spectral space to its coherent frame of open subsets is also an equivalence. 
\end{theorem}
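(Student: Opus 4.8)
The plan is to prove each of the two equivalences by exhibiting an explicit quasi-inverse and checking that the resulting unit and counit are isomorphisms; composing them then recovers the Hochster--Stone duality $\mathbf{Spec}^{\mathrm{op}}\simeq\mathbf{DLat}$. Throughout I would use the frame distributivity law together with two facts that are part of the definition of a coherent frame $P$: that $P^f$ is a sub-distributive-lattice of $P$ (so it is genuinely an object of $\mathbf{DLat}$), and that the top of $P^f$ is the top $1$ of $P$, since $1$ is a join of finite elements and hence is itself finite.

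First, for $\mathbf{CFrm}\to\mathbf{DLat}$, $P\mapsto P^f$. I would begin with a clarification forced by the statement: for this to be defined on morphisms, a morphism of $\mathbf{CFrm}$ must be a frame homomorphism that \emph{also} preserves finite elements (a plain frame homomorphism need not -- e.g.\ a frame homomorphism from the three-element chain can send the middle element to a non-finite element of a suitable $\mathrm{Idl}(D)$). Granting this, restriction to finite elements is a lattice morphism, and the functor is faithful because every element is a join of finite ones and frame homomorphisms preserve joins. For fullness, given a lattice morphism $g\colon P^f\to Q^f$, I would set $\bar g(x)=\bigvee\{g(a):a\in P^f,\ a\leq x\}$, which restricts to $g$ and is readily a frame homomorphism preserving finite elements; preservation of binary meets is the one genuinely computational point and is handled by distributing in $P^f$ and invoking the frame law. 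For essential surjectivity I would send a distributive lattice $D$ to its frame of ideals $\mathrm{Idl}(D)$: this is a frame whose frame law is distributivity of $D$, it is coherent with finite elements exactly the principal ideals, $d\mapsto\{e\in D:e\leq d\}$ is an isomorphism $D\xrightarrow{\sim}\mathrm{Idl}(D)^f$, and $x\mapsto\{a\in P^f:a\leq x\}$ is the inverse isomorphism $P\xrightarrow{\sim}\mathrm{Idl}(P^f)$.

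Next, for $\mathbf{Spec}^{\mathrm{op}}\to\mathbf{CFrm}$, $X\mapsto\mathcal O(X)$. The open-set frame of any space is a frame, and when $X$ is spectral its finite elements are precisely the quasi-compact opens, which by definition form a basis closed under finite intersection and include $X$ itself, so $\mathcal O(X)$ is a coherent frame; a spectral map $f$ induces $f^{-1}$, a morphism of $\mathbf{CFrm}$. Faithfulness uses sobriety of the codomain: a point $x\in X$ is recovered from the frame homomorphism $\mathcal O(Y)\to\mathbf 2$ sending $V$ to $1$ exactly when $x\in f^{-1}(V)$. The quasi-inverse sends a coherent frame $L$ to its space of points $\mathrm{pt}(L)$ -- the completely prime filters of $L$, equivalently the frame homomorphisms $L\to\mathbf 2$ -- with the sets $U_a=\{F:a\in F\}$, $a\in L^f$, as a basis. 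I would then check: $\mathrm{pt}(L)$ is sober (true for the point space of any frame, or by identifying irreducible closed sets with the meet-prime elements of $L$); the canonical map $\lambda\colon L\to\mathcal O(\mathrm{pt}(L))$, $a\mapsto U_a$, is an isomorphism of frames; and consequently $\mathrm{pt}(L)$ is quasi-compact with the $U_a$, $a\in L^f$, as exactly its quasi-compact opens, hence spectral. The counit $X\to\mathrm{pt}(\mathcal O(X))$, sending a point to its neighbourhood filter, is then a homeomorphism by sobriety of $X$, the unit is $\lambda$, and fullness of $\mathcal O(-)$ falls out by the usual zig-zag, the resulting continuous map being spectral precisely because morphisms of $\mathbf{CFrm}$ preserve finite elements.

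I expect the main obstacle to be injectivity of $\lambda$ -- that a coherent frame has enough points. Given $a\not\leq b$ in $L$, coherence produces a finite $c\leq a$ with $c\not\leq b$, and $J=\{d\in L^f:d\leq b\}$ is an ideal of $L^f$ with $c\notin J$; the prime ideal theorem for distributive lattices -- a Zorn's-lemma argument in which a maximal ideal containing $J$ and avoiding $c$ is shown to be prime by distributing $(p_1\vee x)\wedge(p_2\vee y)$ -- then gives a prime ideal of $L^f$ containing $J$ but not $c$, whose complementary prime filter generates a completely prime filter of $L$ containing $a$ but not $b$. Coherence is exactly the hypothesis that makes this available, and everything else reduces to careful but routine manipulation with the frame law, the only other slightly delicate point being that the extension $\bar g$ above preserves binary meets.
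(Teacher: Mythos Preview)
The paper does not prove this theorem: it is stated as background with references to \cite{BConlat} and \cite{barthel2023descenttensortriangulargeometry}, and no proof is given. Your proposal is the standard argument one finds in those sources (ideal completion for $\mathbf{DLat}\to\mathbf{CFrm}$, the point-space functor for $\mathbf{CFrm}\to\mathbf{Spec}^{\mathrm{op}}$, and the prime ideal theorem to show coherent frames are spatial), and it is essentially correct. Your observation that morphisms in $\mathbf{CFrm}$ must be taken to preserve finite elements is exactly right and is implicit in the paper's use of the equivalence; without it the functor to $\mathbf{DLat}$ would not even be defined on morphisms.
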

In particular, there is a contravariant equivalence between the category of spectral spaces and the category of distributive lattices, given by sending a spectral space to its distributive lattice of quasi-compact open subsets \cite[Remark~3.9]{barthel2023descenttensortriangulargeometry}.
\begin{definition}
    We let $\textup{Thickid}(\mathscr{T})$ denote the poset of (radical) thick tensor ideals of $\mathscr{T}$ ordered by inclusion. 
    \par
    We let $\textup{Thickid}^f(\mathscr{T})$ be the distributive lattice of finite elements.
\end{definition}
By \cite[Theorem~3.1.9]{KPLat}, we know that the finite elements $\textup{Thickid}(\mathscr{T})$ are exactly those of the form $\textup{Thick}_{\otimes}(M)$ for some $M \in \mathscr{T}$.
\begin{definition}
    Assume $\mathscr{T}$ is an essentially small tensor-triangulated category. The Balmer spectrum of $\mathscr{T}$, denoted $\textup{Spc}(\mathscr{T})$, is the spectral space associated to $\textup{Thickid}^f(\mathscr{T})^{op}$ under Stone duality. 
\end{definition}
By \cite[Theorem~11.3]{barthel2023descenttensortriangulargeometry}, this agrees with Balmer's original definition in \cite{Balmspec}.
\par 
Given a tensor-triangulated functor $F: \mathscr{T} \to\mathscr{T}'$, we have an induced homomorphism of distributive lattices $\textup{Thickid}^f(\mathscr{T}) \to \textup{Thickid}^f(\mathscr{T}')$ defined by $\textup{Thick}_{\otimes}(M) \mapsto \textup{Thick}_{\otimes}(F(M))$, see \cite[Remark~11.11]{barthel2023descenttensortriangulargeometry}.
\par 
We record the following for later use. Recall that $f$ is an extremal epimorphism if whenever $f = g\circ h$ with $g$ a monomorphism, then $g$ is an isomorphism.
\begin{lemma}\label{reflectingthings}
Let $f:X \to Y$ be a morphism in $\mathbf{DLat}$. Then the following hold. 
\begin{enumerate}[label=(\roman*)]
    \item $f$ is a monomorphism if and only if $f$ is an injective homomorphism. 
    \item $f$ is an extremal epimorphism if and only if $f$ is a surjective homomorphism. 
    \item $f$ is an isomorphism if and only if $f$ is an injective and surjective homomorphism.
\end{enumerate}
\end{lemma}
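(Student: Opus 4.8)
The plan is to treat this as a routine instance of the general fact that $\mathbf{DLat}$ behaves like any category of algebras over a finitary algebraic theory: the forgetful functor $U\colon \mathbf{DLat}\to\mathbf{Set}$ is faithful and has a left adjoint (the free bounded distributive lattice on a set), and a bijective morphism in $\mathbf{DLat}$ is automatically an isomorphism because its set-theoretic inverse preserves $\wedge$, $\vee$, and the bounds $0,1$. Granting these two facts, I would prove (i), then (ii), and then read off (iii); alternatively one could quote that a variety of algebras is a regular category, so extremal epimorphisms coincide with surjections, but the elementary argument is short enough to give directly.

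For (i): the direction ``injective $\Rightarrow$ monomorphism'' is immediate from faithfulness of $U$ (if $f\circ g = f\circ h$ then $g$ and $h$ agree pointwise after cancelling the injection $Uf$). For ``monomorphism $\Rightarrow$ injective'' I would either invoke that $U$, being a right adjoint, preserves monomorphisms, or argue concretely: given $a\neq b$ with $f(a)=f(b)$, the two homomorphisms out of the free bounded distributive lattice on one generator — the three-element chain $0<t<1$ — that send $t$ to $a$ and to $b$ respectively are distinct but become equal after composing with $f$, contradicting monicity.

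For (ii): to see ``surjective $\Rightarrow$ extremal epimorphism'', suppose $f=g\circ h$ with $g\colon Z\to Y$ a monomorphism, hence injective by (i); then $\mathrm{im}(g)\supseteq\mathrm{im}(f)=Y$, so $g$ is a bijective homomorphism and therefore an isomorphism. For the converse, factor $f$ through its set-theoretic image: $f(X)\subseteq Y$ is closed under $\wedge$ and $\vee$ and contains $0_Y=f(0_X)$ and $1_Y=f(1_X)$, so it is a bounded (hence distributive) sublattice of $Y$, the inclusion $g\colon f(X)\hookrightarrow Y$ is a monomorphism, and $f$ factors as $g$ composed with its corestriction; extremality then forces $g$ to be an isomorphism, i.e.\ $f(X)=Y$, so $f$ is surjective.

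Statement (iii) follows formally: an isomorphism is both a monomorphism and an extremal epimorphism, hence injective and surjective by (i) and (ii), and conversely a bijective homomorphism is an isomorphism by the opening remark. I do not expect any genuine obstacle here; the only points worth double-checking are that the image appearing in (ii) is really a subobject in $\mathbf{DLat}$ (i.e.\ that it contains the bounds, which uses that lattice morphisms are bound-preserving) and that the inverse of a bijective homomorphism is again a homomorphism, both of which are entirely routine.
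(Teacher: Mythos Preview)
Your proof is correct, but the approach differs from the paper's. You argue directly in $\mathbf{DLat}$ using that it is a variety of algebras: faithfulness of the forgetful functor and the free lattice on one generator handle (i), and the image factorisation handles (ii). The paper instead passes through Stone duality, invoking \cite[Theorem~5.2.5]{Dickmann_Schwartz_Tressl_2019} and \cite[Theorem~5.4.2]{Dickmann_Schwartz_Tressl_2019} to translate the question into one about epimorphisms and extremal monomorphisms in $\mathbf{Spec}$, then dualising. Your argument is more elementary and entirely self-contained --- indeed it works verbatim in any variety of algebras --- while the paper's approach is consistent with its general strategy of routing everything through the spectral side of the equivalence, at the cost of relying on external references for the actual content.
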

\begin{proof}
    In \cite[Theorem~5.2.5]{Dickmann_Schwartz_Tressl_2019} it is shown that $f$ is injective if and only if its image in $\mathbf{Spec}$ under Stone duality is an epimorphism. The first statement follows. 
    \par 
    Similarly, \cite[Theorem~5.4.2]{Dickmann_Schwartz_Tressl_2019} shows that $f$ is surjective if and only if its image under Stone duality is an extremal monomorphism, and so $(ii)$ holds by duality. 
    \par 
    Now, any injective and surjective homomorphism is both a monomorphism and an extremal epimorphism by the previous two statements. It follows that it must be an isomorphism. On the other hand, suppose $f$ is an isomorphism. Then $f$ is a monomorphism, and so must be injective by $(i)$. It is easy to see that $f$ must be also be an extremal epimorphism and hence is surjective by $(ii)$. 
\end{proof}

We will also consider smashing subcategories, for which we need to recall some facts about localisations; see \cite{Kloc} for more details. A localisation functor is a pair $(L,\lambda)$ where $L:\mathscr{T} \to \mathscr{T}$ is an exact functor and $\lambda: \textup{Id}_{\mathscr{T}} \to L$ is a natural transformation such that $L\lambda =\lambda L$ and $L\lambda:L \to L^2$ is an isomorphism. 
\par 
A Bousfield subcategory $\mathcal{S}$ is a kernel of a localisation functor. Note that these are localising subcategories, and there are various equivalent formulations for a subcategory to be Bousfield, see \cite[Proposition~4.9.1]{Kloc}. In particular, $\mathcal{S}$ is Bousfield if and only if there exists a functorial triangle $\Delta_{\mathcal{S}}$ such that for each $t \in \mathscr{T}$ we have
\[\Gamma_{\mathcal{S}}(t) \to t \to L_{\mathcal{S}}(t) \to \Sigma\Gamma_{\mathcal{S}}(t)\]
such that $\Gamma_{\mathcal{S}}(t) \in \mathcal{S}$ and $L_{\mathcal{S}}(t) \in \mathcal{S}^{\perp}$. Note that, as usual, given a subcategory $\mathcal{S}$ we use $\mathcal{S}^{\perp}$ to denote the full subcategory of objects $X$ such that $\textup{Hom}_{\mathscr{T}}(S,X) = 0$ for all $S \in \mathcal{S}^{\perp}$.
\begin{proposition}\label{smashingdefinition}
    Suppose $\mathcal{S}$ is a Bousfield localising tensor ideal. The following are equivalent. 
    \begin{enumerate}[label=(\roman*)]
        \item $\mathcal{S}^{\perp}$ is a localising tensor ideal
        \item There is an isomorphism of functors $L_{\mathcal{S}}(-) \simeq L_{\mathcal{S}}(\mathbbm{1}) \otimes -$
        \item There is an isomorphism of functors $\Gamma_{\mathcal{S}}(-) \simeq \Gamma_{\mathcal{S}}(\mathbbm{1}) \otimes -$
        \item There is an isomorphism of functors $\Delta_{\mathcal{S}}(-) \simeq \Delta_{\mathcal{S}}(\mathbbm{1}) \otimes -$
    \end{enumerate}
\end{proposition}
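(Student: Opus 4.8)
The plan is to prove the cycle of implications $(iv) \Rightarrow (iii) \Rightarrow (ii) \Rightarrow (i) \Rightarrow (iv)$, exploiting that $\mathcal{S}$ is a \emph{tensor ideal} so that $\Gamma_{\mathcal{S}}(\mathbbm{1}) \otimes -$ lands in $\mathcal{S}$, and that the localisation triangle is functorial and uniquely characterised. First I would note the standing observation: for any $t$, applying $- \otimes t$ to the triangle $\Gamma_{\mathcal{S}}(\mathbbm{1}) \to \mathbbm{1} \to L_{\mathcal{S}}(\mathbbm{1}) \to \Sigma\Gamma_{\mathcal{S}}(\mathbbm{1})$ yields a triangle
\[
\Gamma_{\mathcal{S}}(\mathbbm{1}) \otimes t \to t \to L_{\mathcal{S}}(\mathbbm{1}) \otimes t \to \Sigma(\Gamma_{\mathcal{S}}(\mathbbm{1}) \otimes t),
\]
and since $\mathcal{S}$ is a tensor ideal, $\Gamma_{\mathcal{S}}(\mathbbm{1}) \otimes t \in \mathcal{S}$ always. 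So the map $\Delta_{\mathcal{S}}(\mathbbm{1}) \otimes -$ is "half" of a candidate localisation triangle; the content of each statement is whether the other half, $L_{\mathcal{S}}(\mathbbm{1}) \otimes t$, lies in $\mathcal{S}^{\perp}$. By uniqueness of the functorial triangle $\Delta_{\mathcal{S}}$ (a standard consequence of $\mathcal{S} \cap \mathcal{S}^{\perp} = 0$, see \cite[Proposition~4.9.1]{Kloc}), once we know $L_{\mathcal{S}}(\mathbbm{1}) \otimes t \in \mathcal{S}^{\perp}$ we get the canonical comparison map identifying the two triangles, hence all three isomorphisms $(ii)$, $(iii)$, $(iv)$ simultaneously. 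Thus the crux is really the equivalence of $(i)$ with the tensor-factorisation statements.

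For $(i) \Rightarrow (iv)$ (and hence $(ii)$, $(iii)$): assuming $\mathcal{S}^{\perp}$ is a localising tensor ideal, then $L_{\mathcal{S}}(\mathbbm{1}) \in \mathcal{S}^{\perp}$ forces $L_{\mathcal{S}}(\mathbbm{1}) \otimes t \in \mathcal{S}^{\perp}$ for every $t$. Combined with the standing observation that $\Gamma_{\mathcal{S}}(\mathbbm{1}) \otimes t \in \mathcal{S}$, the triangle displayed above \emph{is} a localisation triangle for $t$ with respect to $\mathcal{S}$, so by uniqueness it is canonically isomorphic to $\Delta_{\mathcal{S}}(t)$; naturality of this comparison is checked on morphisms of $\mathscr{T}$ using that both assignments are functorial. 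For the reverse direction $(ii) \Rightarrow (i)$: given $L_{\mathcal{S}} \simeq L_{\mathcal{S}}(\mathbbm{1}) \otimes -$, take any $X \in \mathcal{S}^{\perp}$, i.e.\ $X \simeq L_{\mathcal{S}}(X)$, and any $Y \in \mathscr{T}$; then $X \otimes Y \simeq L_{\mathcal{S}}(\mathbbm{1}) \otimes X \otimes Y \simeq L_{\mathcal{S}}(X \otimes Y) \in \mathcal{S}^{\perp}$, using that the essential image of $L_{\mathcal{S}}$ is exactly $\mathcal{S}^{\perp}$; closure of $\mathcal{S}^{\perp}$ under coproducts and triangles is automatic (it is always colocalising, and here the tensor-factorisation shows $L_{\mathcal{S}}$ preserves coproducts, so its image is localising). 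This shows $\mathcal{S}^{\perp}$ is a localising tensor ideal. The implications $(iii) \Leftrightarrow (iv)$ and $(iii) \Rightarrow (ii)$ follow formally by rotating the functorial triangle and using the five lemma / two-out-of-three for isomorphisms of exact triangles.

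The main obstacle I anticipate is bookkeeping the \textbf{naturality} of the isomorphisms rather than just their existence objectwise: producing, for each $t$, an isomorphism between $\Delta_{\mathcal{S}}(t)$ and $\Delta_{\mathcal{S}}(\mathbbm{1}) \otimes t$ is easy from uniqueness of localisation triangles, but checking these assemble into a natural transformation of functors requires invoking that the comparison map of localisation triangles is itself natural — this is where one leans on $\mathcal{S} \cap \mathcal{S}^{\perp} = 0$ to get that maps between such triangles are unique, forcing the naturality squares to commute. A secondary subtlety is making sure, in $(ii) \Rightarrow (i)$, that one correctly identifies $\mathcal{S}^{\perp}$ with the image of $L_{\mathcal{S}}$ and that this image is closed under coproducts; this is exactly the point where the hypothesis that $L_{\mathcal{S}} \simeq L_{\mathcal{S}}(\mathbbm{1}) \otimes -$ does real work, since a priori $L_{\mathcal{S}}$ need not preserve coproducts (as the introduction emphasises, this is the very distinction being drawn). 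Everything else is a routine diagram chase with triangulated-category axioms.
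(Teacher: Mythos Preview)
Your proposal is correct and follows essentially the same argument that underlies the paper's proof; the paper simply defers the equivalence of $(ii)$, $(iii)$, $(iv)$ with ``$\mathcal{S}^{\perp}$ is a tensor ideal'' to \cite[Proposition~2.5]{HallRydh} and then adds exactly your observation that $(ii)$ forces $\mathcal{S}^{\perp}$ to be closed under coproducts because tensor products preserve them. Your write-up is a self-contained unpacking of that reference, including the correct identification of the naturality bookkeeping as the one place requiring care.
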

\begin{proof}
    This is almost \cite[Proposition~2.5]{HallRydh}, which shows the equivalence of $(ii),(iii)$ and $(iv)$ with the statement that $\mathcal{S}^{\perp}$ is tensor ideal. The only part we need to check is that if, say, $(ii)$ holds then $\mathcal{S}^{\perp}$ is localising. However, this is immediate since tensor products commute with coproducts.
\end{proof}
\begin{definition}
We say a localising tensor ideal $\mathcal{S}$ is smashing if it satisfies the equivalent conditions of \Cref{smashingdefinition}.
\end{definition} 
\begin{remark}
    If $\mathscr{T}$ is rigidly compactly generated this is equivalent to the a priori weaker statement that $\mathcal{S}^{\perp}$ is closed under coproducts as shown in \cite[Proposition~2.5]{HallRydh}; this property is often used to define smashing ideals, but it is not clear if it is equivalent in general. 
\end{remark}
Finally, we briefly recall the telescope conjecture for rigidly-compactly generated tensor-triangulated categories following Balmer and Favi \cite{BalmFavi}. Let $\mathscr{T}$ be a rigidly-compactly generated tensor triangulated category. Then there is an injective map $\textup{Thickid}(\mathscr{T}^c) \to \textup{Smashid}(\mathscr{T})$ defined by $\mathcal{T} \mapsto \locm(\mathcal{T})$, see \cite[Theorem~4.1]{BalmFavi} or \cite[Corollary~2.1]{HallRydh} for example.
\begin{definition}\cite[Definition~4.2]{BalmFavi} 
    We say that the (rigidly-compactly generated) telescope conjecture holds for $\mathscr{T}$ if the natural map $\textup{Thickid}(\mathscr{T}^c) \to \textup{Smashid}(\mathscr{T})$ is bijective. 
\end{definition}
In view of the fact this map is always injective for rigidly-compactly generated categories, the hard part is showing that it is surjective, that is, in showing that smashing ideals are generated by compact objects. 
\section{The stable module category}
We also need to recall some facts about the stable module category $\tacstab(kG)$ for $\textup{LH}\mathfrak{F}$ groups from \cite{kendall2024costratificationstablecategoriesinfinite,kendall2024stablemodulecategorymodel}. Throughout, we will take $k$ to be a field of characteristic $p > 0$.
\par 
For the definition of $\textup{LH}\mathfrak{F}$ groups in full generality, see \cite{krop}. We will only really be interested in $\textup{H}_1\mathfrak{F}$ groups, which are groups that admits a cellular action on a finite dimensional contractible cell complex such that each stabiliser is finite. For example, this includes all groups with a finite dimensional model for the classifying space for proper actions, and in particular all groups of finite virtual cohomological dimension. 
\par 
We refer to \cite{kendall2024stablemodulecategorymodel} for the general construction of the stable category, and only recall here the properties we will need. We will use $\tachom_{kG}(-,-)$ to denote the morphisms in the stable category.
\par 
As shown in \cite[Section~4]{kendall2024stablemodulecategorymodel}, the stable category is a closed, tensor-triangulated category, where the structure is induced by the usual tensor product over $k$ with diagonal $G$-action, which we will usually denote with an unadorned tensor product $-\otimes -$. 
\begin{lemma}
    Restriction, induction, and coinduction all induce functors on the stable category. Restriction is right adjoint to induction, and restriction is left adjoint to coinduction.
\end{lemma}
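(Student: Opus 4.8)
The plan is to read the three assertions off the construction of $\tacstab(kG)$ from \cite{kendall2024stablemodulecategorymodel}: I would check that restriction, induction, and coinduction are each compatible with the localisation defining the stable category, and then transport the classical Frobenius reciprocity adjunctions through that localisation. Throughout write $H \leq G$, $\ind = \ind^G_H = kG \otimes_{kH} -$, $\res = \res^G_H$, and $\textup{Coind} = \textup{Coind}^G_H = \textup{Hom}_{kH}(kG,-)$; the one structural fact used repeatedly is that $kG$ is free --- hence flat and projective --- as a left and as a right $kH$-module, by choosing coset representatives.

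First I would record that all three functors are exact on the ambient module categories: restriction trivially, induction because $kG$ is flat as a right $kH$-module, and coinduction because $kG$ is projective as a left $kH$-module. Hence each preserves short exact sequences and, on the complexes or modules underlying $\tacstab(kG)$, carries distinguished triangles to distinguished triangles. I would also note that $\res$ and $\ind$ preserve arbitrary coproducts while $\res$ and $\textup{Coind}$ preserve arbitrary products; in particular $\res$ preserves all small (co)limits, as it must if it is to be at once a left adjoint (to $\textup{Coind}$) and a right adjoint (to $\ind$) at the module level.

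Next I would verify that each functor preserves the class of ``trivial'' objects that is inverted in forming $\tacstab(kG)$. Since $kG$ is free on both sides over $kH$, restriction sends projective $kG$-modules to projective $kH$-modules and induction sends projective $kH$-modules to projective $kG$-modules; dually, $\textup{Coind}$, being right adjoint to the exact functor $\res$, sends injective $kH$-modules to injective $kG$-modules. One must then match these facts against the class actually inverted in \cite{kendall2024stablemodulecategorymodel}, so as to obtain induced triangulated functors $\res\colon \tacstab(kG) \to \tacstab(kH)$ and $\ind, \textup{Coind}\colon \tacstab(kH) \to \tacstab(kG)$. This is the step I expect to be the main obstacle, for a concrete reason: when $G$ is infinite, $kG$ is not self-injective, so --- unlike for finite groups --- the ``projective'' and ``injective'' descriptions of the trivial objects are genuinely different, and coinduction in particular behaves poorly with respect to coproducts, so it need not preserve a localising subcategory generated by injectives. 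Identifying precisely which objects and morphisms are inverted, and checking that all three functors respect that class, is the real content of the first assertion.

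The adjunctions then follow formally. The units and counits of $\ind \dashv \res$ and $\res \dashv \textup{Coind}$ are given by honest $kG$-module homomorphisms, hence descend to natural transformations between the induced functors; the triangle identities, being equalities of module maps, remain valid after localisation. Therefore $\ind \dashv \res$ and $\res \dashv \textup{Coind}$ in the stable category, with hom-sets computed by $\tachom$. Alternatively one may appeal to the universal property of the localisation together with the standard principle that an adjunction whose two functors both descend, compatibly with unit and counit, induces an adjunction on the quotients; in either case there is no further difficulty on the formal side.
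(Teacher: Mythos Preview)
Your sketch is sound in outline, but you should know that the paper's own proof consists of a single sentence citing \cite[Theorem~5.6]{kendall2024costratificationstablecategoriesinfinite}; the lemma is treated as a known fact imported from a companion paper rather than argued here. So there is no substantive argument in the present paper to compare against.

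That said, what you have written is essentially the shape of the argument one expects to find behind that citation, and you have correctly isolated the genuine content: for infinite $G$ in the class $\lhf$, the stable category $\tacstab(kG)$ is \emph{not} the naive quotient by projectives (since $kG$ is not self-injective), so one must identify precisely which class of objects is inverted in the construction of \cite{kendall2024stablemodulecategorymodel} and then check that all three functors respect it. Your honest flagging of coinduction as the delicate case is apt, since it need not commute with coproducts and so will not automatically preserve a localising subcategory. The formal passage of the adjunctions through the localisation, once the functors are known to descend, is exactly as you describe and poses no further difficulty. In short: your proposal is a reasonable reconstruction of the deferred proof, with the hard step correctly located but---necessarily---left to the cited source, just as the paper itself does.
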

\begin{proof}
    This is shown in \cite[Theorem~5.6]{kendall2024costratificationstablecategoriesinfinite}. 
\end{proof}
\begin{definition}
    Suppose $F \leq G$ is a subgroup, $M$ is a $kF$-module, and $N$ is a $kG$-module. We will write $M{\uparrow}_F^G$ and $M{\Uparrow}_F^G$ to denote induction and coinduction respectively. Similarly, $N{\downarrow}_F^G$ will denote restriction.
\end{definition}
\begin{proposition}\label{comkin}
    Let $G$ be an $\lhf$ group and $k$ a field of characteristic $p > 0$. Then $\tacstab(kG)$ is compactly generated, with a set of compact generators being all modules of the form $k{\uparrow}_E^G$, with $E \leq G$ a finite elementary abelian subgroup. 
\end{proposition}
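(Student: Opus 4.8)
The plan is to verify the two defining conditions for a set of compact generators: that each $k{\uparrow}_E^G$ with $E$ finite elementary abelian is a compact object of $\tacstab(kG)$, and that any $M \in \tacstab(kG)$ with $\tachom_{kG}(k{\uparrow}_E^G,\Omega^{-n}M)=0$ for all $n\in\mathbb{Z}$ and all such $E$ must vanish; since $G$ has only a set of subgroups, the modules in question form a set, so the second condition also yields that $\tacstab(kG)$ is compactly generated. Compactness is formal: by the previous lemma $-{\uparrow}_E^G$ is left adjoint to $-{\downarrow}_E^G$, and $-{\downarrow}_E^G$ is itself left adjoint to coinduction, so $-{\downarrow}_E^G$ preserves coproducts; a left adjoint whose right adjoint preserves coproducts preserves compact objects, hence $-{\uparrow}_E^G$ sends compacts to compacts. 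As $E$ is finite, $\tacstab(kE)$ is the ordinary stable module category $\underline{\textup{Mod}}(kE)$, in which the finite-dimensional module $k$ is compact, so $k{\uparrow}_E^G$ is compact in $\tacstab(kG)$.

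For the orthogonality statement, take $M\in\tacstab(kG)$ with $\tachom_{kG}(k{\uparrow}_E^G,\Omega^{-n}M)=0$ for all $n$ and all finite elementary abelian $E\leq G$. By the induction–restriction adjunction this reads $\tachom_{kE}(k,\Omega^{-n}(M{\downarrow}_E^G))=0$, that is, the Tate cohomology $\widehat{H}^*(E,M{\downarrow}_E^G)$ vanishes, for every such $E$. For the finite group $E$, the Benson–Carlson–Rickard thick subcategory theorem \cite{bcrthick} gives $\underline{\textup{mod}}(kE)=\thick(k)$, hence $\underline{\textup{Mod}}(kE)=\loc(k)$; since $k$ is compact, the vanishing of $\widehat{H}^*(E,M{\downarrow}_E^G)$ forces $M{\downarrow}_E^G$ to be projective over $kE$. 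Now let $F\leq G$ be an arbitrary finite subgroup: every elementary abelian subgroup of $F$ is a finite elementary abelian subgroup of $G$, so $(M{\downarrow}_F^G){\downarrow}_E^F$ is projective for all elementary abelian $E\leq F$, and Chouinard's theorem then shows $M{\downarrow}_F^G$ is projective, i.e. $M{\downarrow}_F^G=0$ in $\tacstab(kF)$ for every finite subgroup $F\leq G$.

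The remaining step — deducing $M=0$ in $\tacstab(kG)$ from the vanishing of every restriction to a finite subgroup — is the one I expect to be the crux, and it is where the construction of the stable category for $\lhf$ groups enters: this Chouinard-type detection of zero objects on finite subgroups is part of the basic theory of \cite{kendall2024stablemodulecategorymodel} (equivalently, it is the statement that $\tacstab(kG)$ is compactly generated by the modules $k{\uparrow}_F^G$ with $F\leq G$ finite, from which $\tachom_{kG}(k{\uparrow}_F^G,\Omega^{-n}M)\cong\widehat{H}^n(F,M{\downarrow}_F^G)=0$ for all $F$ and $n$ immediately gives $M=0$). Granting this input, the proof is complete, and everything apart from it is formal manipulation of the two adjunctions together with \cite{bcrthick} and Chouinard's theorem. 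If one prefers to see $\loc(\{k{\uparrow}_E^G\})=\tacstab(kG)$ more directly in the case $G\in\hf$, one can use that $G$ acts cellularly on a finite-dimensional contractible complex with finite stabilisers $G_\sigma$: the augmented cellular chain complex is a finite resolution of $k$ by finite direct sums of modules $k{\uparrow}_{G_\sigma}^G$, so $k\in\thick(\{k{\uparrow}_{G_\sigma}^G\})$, and applying the finite-group case of the proposition with transitivity of induction (induction preserving localising subcategories, being a left adjoint) places each $k{\uparrow}_{G_\sigma}^G$, and hence $k$, in $\loc(\{k{\uparrow}_E^G\})$; for general $\lhf$ groups one would instead run this argument through the layers of Kropholler's hierarchy using the structural results of \cite{kendall2024stablemodulecategorymodel}.
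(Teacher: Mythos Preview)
Your proof is correct and follows essentially the same route as the paper: adjunction reduces to showing $M{\downarrow}_E^G\cong 0$ for all finite elementary abelian $E$, then Chouinard's theorem gives $M{\downarrow}_F^G\cong 0$ for all finite $F$, and finally the detection result from \cite{kendall2024stablemodulecategorymodel} (their Theorem~2.30) yields $M\cong 0$. The only difference is that where you invoke \cite{bcrthick} to see that $k$ generates $\tacstab(kE)$, the paper simply observes that $k$ is the unique simple $kE$-module (since $E$ is a $p$-group), which is a lighter justification for the same fact.
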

\begin{proof}
    First, we note that for elementary abelian groups $E$, the stable category is compactly generated by the trivial module $k$, since this is the only simple module. 
    \par 
    \sloppy If $M$ is a $kG$-module such that $\tachom_{kG}(k{\uparrow}_E^G,M) = 0$, then we see by adjunction that $\tachom_{kG}(k,M{\downarrow}_E^G) = 0$. Hence, $M{\downarrow}_E^G \cong 0$ for each finite elementary abelian subgroup $E \leq G$. By Chouinard's theorem we then conclude that $M{\downarrow}_F^G \cong 0$ for every finite subgroup $F \leq G$. It now follows from \cite[Theorem~2.30]{kendall2024stablemodulecategorymodel} that $M \cong 0$. 
\end{proof}
Note that for the above to hold, we really only need to consider maximal elementary abelian $p$-subgroups. 
\par 
We will need the following lemma later. 
\begin{lemma}\label{isol}
    Suppose $M$ and $N$ are $kG$-modules such that $M \in \locm(N)$ and $E \leq G$ is a subgroup. Then $M{\downarrow}_E^G \in \locm(N{\downarrow}_E^G)$.
    \par 
    Similarly, if $X$ and $Y$ are $kE$-modules such that $X \in \locm(Y)$, then $X{\uparrow}_E^G \in \locm(Y{\uparrow}_E^G)$.
\end{lemma}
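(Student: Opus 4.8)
The plan is to prove both statements by the usual device: pass to the largest subcategory on which the desired conclusion holds, and check that it is a localising tensor ideal containing the relevant generator.

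For the first claim, I would set $\mathcal{C} = \{M \in \tacstab(kG)\ |\ M{\downarrow}_E^G \in \locm(N{\downarrow}_E^G)\}$. Since restriction is a triangulated functor preserving coproducts (it is both a left and a right adjoint by the previous lemma) and $\locm(N{\downarrow}_E^G)$ is a localising subcategory (hence also closed under summands, as the stable category is compactly generated), $\mathcal{C}$ is triangulated and closed under coproducts and summands. To see $\mathcal{C}$ is a tensor ideal, use that restriction is strong monoidal, so $(M \otimes L){\downarrow}_E^G \cong M{\downarrow}_E^G \otimes L{\downarrow}_E^G$ for all $L \in \tacstab(kG)$; if $M \in \mathcal{C}$ then $M{\downarrow}_E^G \in \locm(N{\downarrow}_E^G)$, and since the latter is a tensor ideal in $\tacstab(kE)$, tensoring with $L{\downarrow}_E^G$ keeps us inside it. As $N \in \mathcal{C}$ trivially, $\locm(N) \subseteq \mathcal{C}$, which is exactly the assertion.

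For the second claim, set $\mathcal{D} = \{X \in \tacstab(kE)\ |\ X{\uparrow}_E^G \in \locm(Y{\uparrow}_E^G)\}$. Induction is triangulated and, being left adjoint to restriction, preserves coproducts, so $\mathcal{D}$ is localising and contains $Y$. The step requiring slightly more care is that $\mathcal{D}$ is a tensor ideal. Given $X \in \mathcal{D}$ and $Z \in \tacstab(kE)$, we have $X{\uparrow}_E^G \otimes Z{\uparrow}_E^G \in \locm(Y{\uparrow}_E^G)$ since $X{\uparrow}_E^G$ lies in this localising tensor ideal of $\tacstab(kG)$. By the projection formula, $X{\uparrow}_E^G \otimes Z{\uparrow}_E^G \cong \big(X \otimes (Z{\uparrow}_E^G{\downarrow}_E^G)\big){\uparrow}_E^G$, and by the Mackey decomposition $Z{\uparrow}_E^G{\downarrow}_E^G$ has $Z$ as a direct summand (the trivial double coset contributes $Z$, regardless of the index of $E$ in $G$). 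Hence $(X \otimes Z){\uparrow}_E^G$ is a direct summand of an object of $\locm(Y{\uparrow}_E^G)$ and so lies there, i.e. $X \otimes Z \in \mathcal{D}$. Thus $\mathcal{D}$ is a localising tensor ideal containing $Y$, so $\locm(Y) \subseteq \mathcal{D}$.

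The only non-formal ingredients are the interactions of restriction and induction with the tensor structure of the stable category: strong monoidality of restriction, the projection (Frobenius) formula, and the Mackey decomposition. These all hold for $kG$-modules and are natural in the arguments, so they descend to $\tacstab(kG)$; I expect the only real point to check — and it is routine — is that the (possibly infinite) coproduct appearing in the Mackey decomposition is computed correctly in the stable category, which follows from coproducts there agreeing with module-level coproducts.
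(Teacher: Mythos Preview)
Your proof is correct and follows the same strategy as the paper: define the full subcategory on which the conclusion holds, verify it is a localising tensor ideal containing the generator, and conclude. The only difference is that for the tensor-ideal check in the second statement the paper simply cites \cite[Proposition~3.5]{kendall2024costratificationstablecategoriesinfinite}, whereas you spell out the argument directly via the projection formula and the trivial-double-coset summand in the Mackey decomposition; this is precisely the content of that cited result, so the two proofs are essentially identical.
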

\begin{proof}
    Consider the full subcategory of $\tacstab(kG)$ of modules $X$ such that $X{\downarrow}_E^G \in \locm(X{\downarrow}_E^G)$. This is easily checked to be a localising tensor ideal and hence it contains $\locm(N)$. It follows that $M{\downarrow}_E^G\in \locm(X{\downarrow}_E^G)$.
    \par 
    The second statement follows in a similar way, although we use \cite[Proposition~3.5]{kendall2024costratificationstablecategoriesinfinite} to see that the full subcategory of $\tacstab(kE)$ of modules $Z$ such that $Z{\uparrow}_E^G\in\locm(Y{\uparrow}_E^G)$ is a localising tensor ideal. 
\end{proof}
In \cite{kendall2024costratificationstablecategoriesinfinite} we classified the localising tensor ideals and colocalising hom-closed subcategories. We explain why we can reduce to the elementary abelian subgroups, rather than all finite subgroups. We let $\mathcal{E}(G)$ denote the collection of finite elementary abelian $p$-subgroups of $G$. 
\par 
Recall the following Quillen category. 
\begin{definition}\label{aegcatdef}
    The category $\mathcal{A}_{\mathcal{E}}(G)$ is defined as follows: the objects are all elementary abelian $p$-subgroups of $G$ and the morphisms are induced by inclusions and conjugations. 
\end{definition}
We also recall some notation from \cite{kendall2024costratificationstablecategoriesinfinite}. For a group $G$, we let $\loc(G)$ be the set of localising tensor ideal subcategories of $\tacstab(kG)$. As shown in \cite[Section~3]{kendall2024costratificationstablecategoriesinfinite} this induces a functor $\mathcal{A}_{\mathcal{E}}(G)^{op} \to \textup{Set}$, see in particular the discussion after \cite[Lemma~3.17]{kendall2024costratificationstablecategoriesinfinite}. A subgroup inclusion $E \leq F$ induces a map $\loc(F) \to \loc(E)$ given by $\mathcal{L}\mapsto \locm(\mathcal{L}{\downarrow}_E^F) := \locm(\{X{\downarrow}_E^F\ |\ X \in \mathcal{L}\})$, and conjugations induce the obvious maps. 
\par 
There is a natural map $\loc(G) \to \lim\limits_{E \in \mathcal{A}_{\mathcal{E}}(G)}\loc(E)$ induced by restriction. 
\begin{theorem}\label{corlomain}
    The natural map $\loc(G) \to \lim\limits_{E \in \mathcal{A}_{\mathcal{E}}(G)}\loc(E)$ is invertible. 
\end{theorem}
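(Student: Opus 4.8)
The plan is to exhibit an explicit two‑sided inverse. Write $R\colon\loc(G)\to\lim\limits_{E\in\mathcal{A}_{\mathcal{E}}(G)}\loc(E)$ for the natural restriction map, $\mathcal{L}\mapsto(\locm(\mathcal{L}{\downarrow}_E^G))_E$, and define
\[
\Phi\big((\mathcal{L}_E)_E\big):=\{\,M\in\tacstab(kG)\ :\ M{\downarrow}_E^G\in\mathcal{L}_E\ \text{for all}\ E\in\mathcal{E}(G)\,\}.
\]
Since restriction is exact, monoidal, and commutes with coproducts, and since $\mathcal{L}_E$ is a localising tensor ideal of $\tacstab(kE)$, one checks immediately that $\Phi((\mathcal{L}_E)_E)\in\loc(G)$, so $\Phi$ is well defined; both $R$ and $\Phi$ are clearly order preserving. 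It then remains to prove $\Phi\circ R=\mathrm{id}$ and $R\circ\Phi=\mathrm{id}$.

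For $\Phi\circ R=\mathrm{id}$, fix $\mathcal{L}\in\loc(G)$. The inclusion $\mathcal{L}\subseteq\Phi(R(\mathcal{L}))$ is immediate, so the content is the reverse. Suppose $M$ satisfies $M{\downarrow}_E^G\in\locm(\mathcal{L}{\downarrow}_E^G)$ for every $E\in\mathcal{E}(G)$. Applying the induction statement of \Cref{isol} (for collections of objects) together with the projection formula $(M{\downarrow}_E^G){\uparrow}_E^G\cong M\otimes k{\uparrow}_E^G$, we obtain $M\otimes k{\uparrow}_E^G\in\locm(\{N\otimes k{\uparrow}_E^G\ :\ N\in\mathcal{L}\})\subseteq\mathcal{L}$, using that $\mathcal{L}$ is a tensor ideal. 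By \Cref{comkin} the modules $k{\uparrow}_E^G$ with $E\in\mathcal{E}(G)$ compactly generate $\tacstab(kG)$, so $k\in\loc(\{k{\uparrow}_E^G\ :\ E\in\mathcal{E}(G)\})$; tensoring with $M$ (an exact, coproduct‑preserving functor) gives $M\in\loc(\{M\otimes k{\uparrow}_E^G\ :\ E\in\mathcal{E}(G)\})\subseteq\mathcal{L}$. Hence $\Phi(R(\mathcal{L}))=\mathcal{L}$; in particular $R$ is injective.

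For $R\circ\Phi=\mathrm{id}$, fix a compatible family $(\mathcal{L}_E)_E$ and set $\mathcal{L}=\Phi((\mathcal{L}_E)_E)$; we must show $\locm(\mathcal{L}{\downarrow}_E^G)=\mathcal{L}_E$ for each $E$. The inclusion $\subseteq$ is immediate from the definition of $\mathcal{L}$. For $\supseteq$, take $X\in\mathcal{L}_E$; I claim $X{\uparrow}_E^G\in\mathcal{L}$. By the Mackey formula (which holds at the level of modules, hence in the stable category), $(X{\uparrow}_E^G){\downarrow}_{E'}^G$ is a, possibly infinite, coproduct of the modules $({}^{g}X{\downarrow}_{E'\cap{}^{g}E}){\uparrow}_{E'\cap{}^{g}E}^{E'}$ over $g\in E'\backslash G/E$. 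Conjugation‑compatibility of the family gives ${}^{g}X\in\mathcal{L}_{{}^{g}E}$; compatibility along the inclusion $E'\cap{}^{g}E\leq{}^{g}E$ gives ${}^{g}X{\downarrow}_{E'\cap{}^{g}E}\in\mathcal{L}_{E'\cap{}^{g}E}$; and then, arguing exactly as in the previous paragraph (using \Cref{isol}, the projection formula, the relation $\mathcal{L}_{E'\cap{}^{g}E}=\locm(\mathcal{L}_{E'}{\downarrow}_{E'\cap{}^{g}E})$, and the tensor‑ideal property of $\mathcal{L}_{E'}$), the induced‑up module lies in $\mathcal{L}_{E'}$. As $\mathcal{L}_{E'}$ is localising, $(X{\uparrow}_E^G){\downarrow}_{E'}^G\in\mathcal{L}_{E'}$, proving the claim. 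Finally, the $g=e$ summand in the Mackey decomposition of $(X{\uparrow}_E^G){\downarrow}_E^G$ is $X$ itself, so $X$ is a direct summand of a module lying in $\mathcal{L}{\downarrow}_E^G$, whence $X\in\locm(\mathcal{L}{\downarrow}_E^G)$. This yields $R\circ\Phi=\mathrm{id}$, and hence $R$ is invertible.

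The main obstacle is the reverse inclusion in $\Phi\circ R=\mathrm{id}$ — equivalently, the injectivity of $R$ — which is precisely where compact generation of $\tacstab(kG)$ by the modules $k{\uparrow}_E^G$ (i.e. \Cref{comkin}, and ultimately Chouinard's theorem) is essential: without it there is no reason that an object detected on all elementary abelian subgroups must already lie in $\mathcal{L}$. The surjectivity half is comparatively routine, amounting to Mackey‑formula bookkeeping combined with the stability of $\locm$ under restriction and induction from \Cref{isol}. One should take a little care that the Mackey decompositions here are coproducts over possibly infinite sets of double cosets, but this causes no difficulty since all subcategories in sight are closed under coproducts.
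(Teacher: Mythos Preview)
Your argument is correct. The paper's proof is essentially a one-line citation: it observes (via Chouinard and \Cref{comkin}) that the detection property holds for the family of elementary abelian subgroups, and then invokes \cite[Theorem~3.24]{kendall2024costratificationstablecategoriesinfinite}, a general result from the author's earlier work saying that detection implies the desired bijection. You instead unpack that black box and give a self-contained proof by exhibiting the explicit inverse $\Phi$ and checking both composites directly.

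The two approaches are not really in tension: your argument is, morally, a specialisation of the proof of the cited theorem to the elementary abelian case. What your version buys is transparency --- one sees exactly where Chouinard/compact generation enters (the step $k\in\loc(\{k{\uparrow}_E^G\})$ in the injectivity half) and exactly how the limit-compatibility conditions are consumed (the Mackey bookkeeping in the surjectivity half). What the paper's version buys is modularity: the same cited theorem is reused elsewhere (e.g.\ for \Cref{detectedelsub}), so isolating it once avoids repetition. One very minor remark: in your surjectivity half the appeal to ``arguing exactly as in the previous paragraph'' is slightly heavier than needed --- for $Y\in\locm(\mathcal{L}_{E'}{\downarrow}_F)$ with $F=E'\cap{}^gE$, the induction part of \Cref{isol} plus the projection formula already give $Y{\uparrow}_F^{E'}\in\mathcal{L}_{E'}$ directly, without needing the compact-generation step again.
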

\begin{proof}
    As shown in the proof of \Cref{comkin}, Chouinard's theorem allows us to show that for a $kG$-module $M$, we have $M\cong 0$ if and only if $M{\downarrow}_E^G \cong 0$ for every finite elementary abelian subgroup $E \leq G$. In the language of \cite{kendall2024costratificationstablecategoriesinfinite}, this means that the detection property holds for the collection of elementary abelian subgroups of $G$. Given this, the result now follows from \cite[Theorem~3.24]{kendall2024costratificationstablecategoriesinfinite}.
\end{proof}
We extract the following in the form we will require later. 
\begin{corollary}\label{hbil}
    Let $M$ and $N$ be $kG$-modules. Then $M \in \locm(N)$ if and only if $M{\downarrow}_E^G \in \locm(N{\downarrow}_E^G)$ for every finite elementary abelian subgroup $E \leq G$. 
    \par 
    Similarly, $M \in \colocm(N)$ if and only if $M{\downarrow}_E^G \in \colocm(N{\downarrow}_E^G)$ for each finite elementary abelian subgroup $E \leq G$.
\end{corollary}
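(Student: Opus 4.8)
The forward implications are exactly \Cref{isol} (for $\locm$) together with its straightforward analogue for $\colocm$, so all the content lies in the reverse implications, and these follow formally from \Cref{corlomain}. I will treat the localising case in detail and indicate the (identical) dualisation at the end.

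Suppose $M{\downarrow}_E^G \in \locm(N{\downarrow}_E^G)$ for every finite elementary abelian $p$-subgroup $E \leq G$. Set $\mathcal{L}_1 = \locm(N)$ and $\mathcal{L}_2 = \locm(\{M,N\})$, so that $\mathcal{L}_1 \subseteq \mathcal{L}_2$ and $M \in \mathcal{L}_2$; it suffices to show $\mathcal{L}_1 = \mathcal{L}_2$. I would compute the image of each under the restriction map $\loc(G) \to \loc(E)$, which sends $\mathcal{L} \mapsto \locm(\mathcal{L}{\downarrow}_E^G)$. By \Cref{isol}, every object of $\mathcal{L}_1 = \locm(N)$ restricts into $\locm(N{\downarrow}_E^G)$, whence $\locm(\mathcal{L}_1{\downarrow}_E^G) \subseteq \locm(N{\downarrow}_E^G)$; the reverse inclusion is immediate since $N{\downarrow}_E^G \in \mathcal{L}_1{\downarrow}_E^G$, so the image of $\mathcal{L}_1$ in $\loc(E)$ is precisely $\locm(N{\downarrow}_E^G)$. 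The same computation shows the image of $\mathcal{L}_2$ is $\locm(\{M{\downarrow}_E^G, N{\downarrow}_E^G\})$, which equals $\locm(N{\downarrow}_E^G)$ by hypothesis. Thus $\mathcal{L}_1$ and $\mathcal{L}_2$ have the same image in $\loc(E)$ for every $E \in \mathcal{A}_{\mathcal{E}}(G)$, hence the same image in $\lim\limits_{E \in \mathcal{A}_{\mathcal{E}}(G)}\loc(E)$. Since this natural map is invertible by \Cref{corlomain}, it is in particular injective, so $\mathcal{L}_1 = \mathcal{L}_2$ and therefore $M \in \locm(N)$.

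For the colocalising statement I would run the identical argument with $\colocm$ in place of $\locm$, using the corresponding limit decomposition for colocalising hom-closed subcategories established in \cite{kendall2024costratificationstablecategoriesinfinite} (which again rests on the detection property supplied by Chouinard's theorem, exactly as in \Cref{corlomain}). The only additional input is the $\colocm$-analogue of \Cref{isol}: the full subcategory of $\tacstab(kG)$ of modules $X$ with $X{\downarrow}_E^G \in \colocm(Y{\downarrow}_E^G)$ is colocalising, since restriction preserves products (it has induction as a left adjoint), and hom-closed, since restriction is compatible with the internal hom; hence it contains $\colocm(Y)$, which is what is needed to identify the image of a colocalising hom-closed subcategory under restriction.

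The substantive work is of course already packaged into \Cref{corlomain} and \Cref{isol}, so I do not anticipate a genuine obstacle here; the only point that requires a moment's care is verifying that the image of $\locm(N)$ under restriction to $E$ is $\locm(N{\downarrow}_E^G)$ \emph{on the nose} rather than some a priori larger ideal, and this is precisely what \Cref{isol} delivers.
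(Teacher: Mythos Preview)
Your proof is correct and follows essentially the same approach as the paper: forward direction by \Cref{isol}, reverse direction by the injectivity of the map in \Cref{corlomain}. The paper's own proof is considerably more terse---it simply writes ``we immediately conclude by \Cref{corlomain}''---whereas you have carefully spelled out the mechanism (comparing $\locm(N)$ with $\locm(\{M,N\})$ and computing their images in each $\loc(E)$), which is exactly the unpacking that statement requires.
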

\begin{proof}
    We show only the claim about localising tensor ideals, with the claim about colocalising hom-closed subcategories being dual. We have already noted in \Cref{isol} that if $M \in \locm(N)$ then $M{\downarrow}_E^G \in \locm(N{\downarrow}_E^G)$ for every finite elementary abelian subgroup $E \leq G$. 
    \par 
    On the other hand, suppose $M{\downarrow}_E^G \in \locm(N{\downarrow}_E^G)$ for every finite elementary abelian subgroup $E \leq G$. Then we immediately conclude by \Cref{corlomain} that $M \in \locm(N)$. 
\end{proof}
We also need the following result, which is essentially from \cite{kendall2024costratificationstablecategoriesinfinite}, although we again explain why we can reduce to elementary abelian subgroups.
\begin{proposition}\label{detectedelsub}
Let $M$ be a $kG$-module. Then $\locm(M) = \locm(\bigoplus\limits_{E \in \mathcal{E}(G)} M{\downarrow}_E^G{\uparrow}_E^G)$ and $\colocm(M) = \colocm(\prod\limits_{E\in\mathcal{E}(G)} M{\downarrow}_E^G{\Uparrow}_E^G)$. 
\end{proposition}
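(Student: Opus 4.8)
The plan is to prove the two identities separately but by dual arguments, relying on the classification in \Cref{corlomain} and its consequence \Cref{hbil}. I will focus on the statement for localising tensor ideals; the colocalising statement follows by the dual argument using the second half of \Cref{hbil}. By \Cref{hbil}, to show $\locm(M) = \locm(\bigoplus_{E \in \mathcal{E}(G)} M{\downarrow}_E^G{\uparrow}_E^G)$ it suffices to check that both sides restrict to the same localising tensor ideal of $\tacstab(kF)$ for every finite elementary abelian subgroup $F \leq G$; equivalently, that $M{\downarrow}_F^G \in \locm\bigl((\bigoplus_{E} M{\downarrow}_E^G{\uparrow}_E^G){\downarrow}_F^G\bigr)$ and conversely.

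First I would establish the inclusion $\supseteq$. For each $E$ we have $M{\downarrow}_E^G \in \locm(M)$ trivially, and since $\locm(M)$ is a localising tensor ideal containing $M$ it is closed under restriction-then-induction by \Cref{isol} (applied to $kE \leq kG$); more precisely, $M{\downarrow}_E^G \in \locm(M{\downarrow}_E^G)$ and then $M{\downarrow}_E^G{\uparrow}_E^G \in \locm(M)$ using that $\locm(M)$ is a tensor ideal closed under the relevant operations — here one invokes \cite[Proposition~3.5]{kendall2024costratificationstablecategoriesinfinite} as in the proof of \Cref{isol}. Taking the coproduct over the (small) set $\mathcal{E}(G)$, which lies in the localising subcategory $\locm(M)$, gives $\bigoplus_{E} M{\downarrow}_E^G{\uparrow}_E^G \in \locm(M)$, hence $\locm(\bigoplus_{E} M{\downarrow}_E^G{\uparrow}_E^G) \subseteq \locm(M)$.

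For the reverse inclusion $\subseteq$, I would use \Cref{hbil}: it is enough to show that $M{\downarrow}_F^G \in \locm\bigl(N{\downarrow}_F^G\bigr)$ for every $F \in \mathcal{E}(G)$, where $N = \bigoplus_{E \in \mathcal{E}(G)} M{\downarrow}_E^G{\uparrow}_E^G$. But $N{\downarrow}_F^G$ contains, as a summand, $(M{\downarrow}_F^G{\uparrow}_F^G){\downarrow}_F^G$, and by the Mackey formula this restriction has $M{\downarrow}_F^G$ itself as a direct summand (the identity double coset contributes a copy of $M{\downarrow}_F^G$, since $F$ is its own normaliser component in that term, or more simply because for the trivial subgroup inclusion the counit of the $({\uparrow},{\downarrow})$-adjunction is split). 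Hence $M{\downarrow}_F^G \in \locm(N{\downarrow}_F^G)$, and \Cref{hbil} gives $M \in \locm(N)$, completing the proof. The colocalising statement is obtained by replacing $\locm$ with $\colocm$, induction with coinduction, coproducts with products, and invoking the second assertions of \Cref{isol} and \Cref{hbil} throughout.

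The main obstacle I anticipate is the splitting claim in the last paragraph: one must be careful that the Mackey decomposition of $M{\downarrow}_F^G{\uparrow}_F^G{\downarrow}_F^G$ really does exhibit $M{\downarrow}_F^G$ as a stable direct summand in $\tacstab(kF)$, rather than merely up to objects supported on smaller subgroups. This is standard for finite-index situations but here $[G:F] = \infty$, so I would instead avoid Mackey entirely and argue directly: $\locm(M{\downarrow}_F^G{\uparrow}_F^G)$ restricted to $F$ contains $\locm\bigl((M{\downarrow}_F^G{\uparrow}_F^G){\downarrow}_F^G\bigr)$, and one checks using the unit/counit of adjunction that $M{\downarrow}_F^G$ lies in the localising tensor ideal this generates — this is exactly the content needed, and it is where the finiteness of $F$ (so that $\tacstab(kF)$ is well-behaved) and \Cref{comkin} get used.
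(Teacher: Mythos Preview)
Your argument is correct, and in fact more explicit than the paper's own proof, which simply observes that the detection property holds for the family $\mathcal{E}(G)$ and then cites \cite[Theorem~3.24]{kendall2024costratificationstablecategoriesinfinite} and \cite[Theorem~4.10]{kendall2024costratificationstablecategoriesinfinite}. Your route---deriving the statement from \Cref{hbil} together with the Mackey formula---is the natural internal unpacking of that citation, since \Cref{hbil} itself rests on \Cref{corlomain}, which in turn is the same \cite[Theorem~3.24]{kendall2024costratificationstablecategoriesinfinite}. So the two approaches are logically equivalent; yours just makes the mechanism visible.

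Two small points. First, your worry about the Mackey formula in the infinite-index situation is unnecessary: the decomposition $N{\uparrow}_F^G{\downarrow}_F^G \cong \bigoplus_{g \in [F\backslash G/F]} {}^gN{\downarrow}^{F^g}_{F^g\cap F}{\uparrow}^F_{F\cap{}^gF}$ holds for arbitrary subgroups, and the identity double coset contributes exactly $N$ as a summand. The paper itself uses this freely (see for instance the proofs of \Cref{inducedsupport} and \Cref{bsl}), so you may invoke it without further justification; the alternative argument you sketch via adjunction units is not needed.

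Second, the phrasing of your $\supseteq$ direction is slightly tangled (e.g.\ ``$M{\downarrow}_E^G \in \locm(M)$'' does not typecheck, since the left-hand side is a $kE$-module). The cleanest way to say it is via the projection formula $M{\downarrow}_E^G{\uparrow}_E^G \cong M \otimes k{\uparrow}_E^G$, which lies in $\locm(M)$ because the latter is a tensor ideal; then close under coproducts. This replaces the somewhat indirect appeal to \Cref{isol} and \cite[Proposition~3.5]{kendall2024costratificationstablecategoriesinfinite}.
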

\begin{proof}
    As we mentioned in the proof of \Cref{corlomain}, in the language of \cite{kendall2024costratificationstablecategoriesinfinite}, we have that the detection property holds for the collection of finite elementary abelian $p$-subgroups of $G$. The statement is now shown in \cite[Theorem~3.24]{kendall2024costratificationstablecategoriesinfinite} and \cite[Theorem~4.10]{kendall2024costratificationstablecategoriesinfinite}.
\end{proof}
We also introduce some notation for ease. We let $\tacstab(kG)^c$ and $\tacstab(kG)^d$ denote the subcategories of compact objects and dualisable objects respectively. We will also write $\textup{Thickid}(G^d)$ and $\textup{Thickid}^f(G^d)$ to denote $\textup{Thickid}(\tacstab(kG)^d)$ and $\textup{Thickid}^f(\tacstab(kG)^d)$ respectively. 
\subsection{Support}\label{subsectionofsupport}
We briefly recall the support for infinite groups as defined by Benson \cite[Section~15]{bensoninf}. As we showed in \cite[Proposition~5.11]{kendall2024costratificationstablecategoriesinfinite} this support is exactly the support classifying the localising tensor ideals of the stable category. 
\par 
For a finite elementary abelian group $F$, we let $\textup{Supp}_F$ denote the usual support as in \cite{BCRvars,BIKloc}. For each finite elementary abelian subgroup $F \leq G$, we have a natural map $\rho^*_F: \textup{Proj}(H^*(F,k)) \to \underset{{E \in \mathcal{A}_{\mathcal{E}}(G)}}{\textup{colim}}\textup{Proj}(H^*(E,k)) $.
\begin{definition}
    Let $G$ be an $\textup{LH}\mathfrak{F}$ group and $M$ be a $kG$-module. We define the support of $M$, denoted $\textup{Supp}_G(M)$, as the union over all finite elementary abelian subgroups $E \leq G$ of $\rho_E^*(\textup{Supp}_E(M{\downarrow}_E^G))$. 
\end{definition}
By the definition, we find that many of the properties of support for finite groups will carry over. 
\begin{proposition}\label{prop:supportprop}
The support has the following properties. 
\begin{enumerate}[label=(\roman*)]
\item For any $kG$-module $M$, $\textup{Supp}_G(M) = \emptyset \iff M \cong 0$ in $\tacstab(kG)$
\item For any $kG$-module $M$, $\textup{Supp}_G(\Omega^{-1}M) = \textup{Supp}_G(M)$
\item For any exact triangle $M_1 \to M_2 \to M_3 \to \Omega^{-1}(M_1)$ in $\tacstab(kG)$, we have $\textup{Supp}_G(M_2) \subseteq \textup{Supp}_G(M_1) \cup \textup{Supp}_G(M_3)$
\item For any collection $\{M_{\alpha}\}$ of $kG$-modules, $\textup{Supp}_G(\bigoplus\limits_{\alpha}M_{\alpha}) = \bigcup\limits_{\alpha}\textup{Supp}_G(M_{\alpha})$
\item For any $kG$-modules $M$ and $N$, $\textup{Supp}_G(M\otimes N) = \textup{Supp}_G(M) \cap \textup{Supp}_G(N)$
\end{enumerate}
\end{proposition}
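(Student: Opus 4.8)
The plan is to deduce each of the five properties from its classical counterpart for finite elementary abelian groups \cite{BCRvars,BIKloc}, using only that restriction $(-){\downarrow}_E^G\colon\tacstab(kG)\to\tacstab(kE)$ is a triangulated functor (so it commutes with $\Omega^{-1}$ and carries exact triangles to exact triangles), preserves arbitrary coproducts, and is monoidal, i.e.\ $(M\otimes N){\downarrow}_E^G\cong M{\downarrow}_E^G\otimes N{\downarrow}_E^G$; all of these are immediate from the construction in \cite{kendall2024stablemodulecategorymodel,kendall2024costratificationstablecategoriesinfinite}. Since each $\rho_E^*$ is just a map of sets, it preserves inclusions and commutes with unions, and forming the union over all finite elementary abelian $E\leq G$ causes no trouble. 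Thus for (ii) one has $(\Omega^{-1}M){\downarrow}_E^G\cong\Omega^{-1}(M{\downarrow}_E^G)$ and $\textup{Supp}_E(\Omega^{-1}(-))=\textup{Supp}_E(-)$; for (iii) one restricts the triangle, applies the finite-group inequality on each $E$, applies $\rho_E^*$, and takes unions; and for (iv) one uses $(\bigoplus_\alpha M_\alpha){\downarrow}_E^G\cong\bigoplus_\alpha M_\alpha{\downarrow}_E^G$ together with additivity of $\textup{Supp}_E$ and interchange of the two unions. For (i), one implication is clear; conversely, if $\textup{Supp}_G(M)=\emptyset$ then $\textup{Supp}_E(M{\downarrow}_E^G)=\emptyset$ and hence $M{\downarrow}_E^G\cong0$ for every finite elementary abelian $E\leq G$, and Chouinard's theorem together with \cite[Theorem~2.30]{kendall2024stablemodulecategorymodel} then forces $M\cong0$, exactly as in the proof of \Cref{comkin}.

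The one property needing more than a one-line reduction is the tensor-product formula (v). The inclusion $\textup{Supp}_G(M\otimes N)\subseteq\textup{Supp}_G(M)\cap\textup{Supp}_G(N)$ is immediate: on each $E$ the finite-group formula gives $\textup{Supp}_E((M\otimes N){\downarrow}_E^G)=\textup{Supp}_E(M{\downarrow}_E^G)\cap\textup{Supp}_E(N{\downarrow}_E^G)$, and $\rho_E^*$ of an intersection is contained in the intersection of the images. For the reverse inclusion, let $x\in\textup{Supp}_G(M)\cap\textup{Supp}_G(N)$ and choose finite elementary abelian $E,F\leq G$ and points $y\in\textup{Supp}_E(M{\downarrow}_E^G)$, $z\in\textup{Supp}_F(N{\downarrow}_F^G)$ with $\rho_E^*(y)=x=\rho_F^*(z)$. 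The crucial input is the Avrunin--Scott compatibility: for an inclusion $P\leq Q$ of finite elementary abelian groups and a $kQ$-module $L$ one has $\textup{Supp}_P(L{\downarrow}_P^Q)=(\textup{res}^*)^{-1}(\textup{Supp}_Q(L))$, where $\textup{res}^*\colon\textup{Proj}(H^*(P,k))\to\textup{Proj}(H^*(Q,k))$ is the map induced by restriction in cohomology, which is the transition map of the colimit diagram, together with the analogous statement for conjugations (which is precisely what makes the functor $\mathcal{A}_{\mathcal{E}}(G)^{op}\to\textup{Set}$ of \cite{kendall2024costratificationstablecategoriesinfinite} well defined). This says that ``belongs to $\textup{Supp}(M)$'' is preserved by every morphism of $\mathcal{A}_{\mathcal{E}}(G)$ in either direction, and hence along any zigzag. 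The equality $\rho_E^*(y)=\rho_F^*(z)$ in $\underset{E\in\mathcal{A}_{\mathcal{E}}(G)}{\textup{colim}}\textup{Proj}(H^*(E,k))$ is witnessed by such a zigzag connecting $y$ to $z$, so propagating the relation starting from $y\in\textup{Supp}_E(M{\downarrow}_E^G)$ yields $z\in\textup{Supp}_F(M{\downarrow}_F^G)$. Then $z\in\textup{Supp}_F(M{\downarrow}_F^G)\cap\textup{Supp}_F(N{\downarrow}_F^G)=\textup{Supp}_F((M\otimes N){\downarrow}_F^G)$, whence $x=\rho_F^*(z)\in\textup{Supp}_G(M\otimes N)$.

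The main obstacle is thus (v), and specifically the bookkeeping with this colimit: one must know that membership in the support is functorial along all of $\mathcal{A}_{\mathcal{E}}(G)$, not merely valid subgroup-by-subgroup (the colimit is not filtered, so the two chosen representatives need not be comparable). This is exactly what the Avrunin--Scott preimage formula supplies, since it exhibits each transition map as interacting with supports in the strongest possible way. As an alternative route to (v), one could instead combine \Cref{hbil} --- which reduces the identity $\locm(M\otimes N)=\locm(M)\cap\locm(N)$ to the finite elementary abelian case --- with the classification of localising tensor ideals by $\textup{Supp}_G$ from \cite{kendall2024costratificationstablecategoriesinfinite}, under which intersection of ideals corresponds to intersection of supports; this is valid provided that classification does not itself depend on the present proposition.
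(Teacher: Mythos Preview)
Your proof is correct and, for parts (i)--(iv), matches the paper's argument essentially line for line: each property is reduced to the corresponding statement for finite elementary abelian groups via the fact that restriction is a coproduct-preserving tensor-triangulated functor, with Chouinard's theorem and \cite[Theorem~2.30]{kendall2024stablemodulecategorymodel} handling the detection statement in (i).

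For (v) you are more careful than the paper. The paper dispatches (v) in a single sentence, asserting that since the tensor-product formula holds for each finite elementary abelian $E$ it holds for $G$ ``by definition''. You correctly identify that only the inclusion $\textup{Supp}_G(M\otimes N)\subseteq\textup{Supp}_G(M)\cap\textup{Supp}_G(N)$ is truly automatic, because $\rho_E^*$ need not be injective and the colimit over $\mathcal{A}_{\mathcal{E}}(G)$ is not filtered; your zigzag argument using the Avrunin--Scott preimage formula $\textup{Supp}_P(L{\downarrow}_P^Q)=(\textup{res}^*)^{-1}(\textup{Supp}_Q(L))$ is exactly what is needed to justify the reverse inclusion and is the honest content behind the paper's ``by definition''. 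Your alternative route via \Cref{hbil} and the classification in \cite{kendall2024costratificationstablecategoriesinfinite} also works and is not circular, since that classification is established independently of the present proposition.
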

\begin{proof}
\begin{enumerate}[label=(\roman*)]
\item This holds since $M \cong 0$ in $\tacstab(kG)$ if and only if $M{\downarrow}_F^G \cong 0$ in $\tacstab(kF)$ for each finite subgroup $F \leq G$. By Chouinard's theorem, this is true if and only if $M{\downarrow}_E^G \cong 0$ in $\tacstab(kE)$ for each finite elementary abelian subgroup. 
\item This follows since restriction commutes with syzygies and the corresponding statement is true over every finite group by \cite[Proposition~5.1]{BIKloc}.
\item The corresponding statement is true for finite groups by \cite[Proposition~5.1]{BIKloc} and so it holds for $G$ by definition. 
\item Restriction commutes with direct sums and so this follows by restricting to finite subgroups, where the corresponding result holds by \cite[Corollary~6.6]{BIKloc}. 
\item Restricting to any elementary abelian subgroup commutes with tensor products. The tensor product formula holds over finite groups \cite[Theorem~11.1]{repstrat} and so it holds for $G$ by definition.
\end{enumerate}
\end{proof}
We can also determine the support of induced modules. 
\begin{lemma}\label{inducedsupport}
    Let $E \leq G$ be an elementary abelian subgroup, and $M$ a $kE$-module. Then $\textup{Supp}_G(M{\uparrow}_E^G) = \rho_E^*(\textup{Supp}_E(M))$.
\end{lemma}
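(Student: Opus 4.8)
The plan is to unwind the definition $\textup{Supp}_G(M{\uparrow}_E^G) = \bigcup_{F \in \mathcal{E}(G)} \rho_F^*\big(\textup{Supp}_F((M{\uparrow}_E^G){\downarrow}_F^G)\big)$ and analyse each restriction $(M{\uparrow}_E^G){\downarrow}_F^G$ via Mackey's formula, thereby reducing the claim to the known behaviour of support for finite groups together with the compatibilities of the maps $\rho^*$.

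For the inclusion $\subseteq$: since induction and restriction on $\tacstab$ are induced by the corresponding operations on modules, Mackey's decomposition descends to the stable category and gives
\[(M{\uparrow}_E^G){\downarrow}_F^G \;\cong\; \bigoplus_{FgE \in F\backslash G/E} \big(({}^gM){\downarrow}_{F\cap {}^gE}^{{}^gE}\big){\uparrow}_{F\cap {}^gE}^F .\]
By \Cref{prop:supportprop}(iv), $\textup{Supp}_F$ of the left-hand side is the union over the double cosets of the supports of the summands. Fix $g$ and set $D = F \cap {}^gE$. Standard support theory for finite groups \cite{BCRvars,BIKloc} gives that $\textup{Supp}_F$ of an induced module is the image of the support of the original module under the map $\textup{Proj}(H^*(D,k)) \to \textup{Proj}(H^*(F,k))$ induced by restriction on cohomology, and that the image of $\textup{Supp}_D(({}^gM){\downarrow}_D^{{}^gE})$ under $\textup{Proj}(H^*(D,k)) \to \textup{Proj}(H^*({}^gE,k))$ is contained in $\textup{Supp}_{{}^gE}({}^gM)$. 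By construction of the colimit defining the codomain of the maps $\rho^*$ in \cite{kendall2024costratificationstablecategoriesinfinite}, the map $\textup{Proj}(H^*(D,k)) \to \textup{Proj}(H^*(F,k))$ followed by $\rho_F^*$ equals $\rho_D^*$, and conjugation by $g \in G$ identifies $\rho_{{}^gE}^*$ with $\rho_E^*$. Combining these, each summand contributes a subset of $\rho_E^*(\textup{Supp}_E(M))$, so taking the union over all $F \in \mathcal{E}(G)$ and all double cosets gives $\textup{Supp}_G(M{\uparrow}_E^G) \subseteq \rho_E^*(\textup{Supp}_E(M))$.

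For the reverse inclusion, the trivial double coset $EeE$ contributes the summand $kE \otimes_{kE} M \cong M$ to $(M{\uparrow}_E^G){\downarrow}_E^G$, so $M$ is a direct summand of $(M{\uparrow}_E^G){\downarrow}_E^G$ already at the level of modules, hence $\textup{Supp}_E(M) \subseteq \textup{Supp}_E((M{\uparrow}_E^G){\downarrow}_E^G)$ by \Cref{prop:supportprop}(iv); since $E$ is one of the subgroups in the defining union of $\textup{Supp}_G(M{\uparrow}_E^G)$, applying $\rho_E^*$ yields $\rho_E^*(\textup{Supp}_E(M)) \subseteq \textup{Supp}_G(M{\uparrow}_E^G)$.

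The substantive ingredients are the two finite-group facts that induction transports support varieties forward along the restriction map and that restriction does not enlarge the image of a support variety; the rest is bookkeeping. The points requiring a little care are that Mackey's formula is valid in $\tacstab(kG)$ — which holds since it is true for modules and both functors are inherited from the module category — and that the maps $\rho^*$ are compatible with the inclusion and conjugation maps occurring in the double coset formula, which is part of the construction of the functor underlying the colimit $\textup{colim}_{E \in \mathcal{A}_{\mathcal{E}}(G)}\textup{Proj}(H^*(E,k))$ in \cite{kendall2024costratificationstablecategoriesinfinite}.
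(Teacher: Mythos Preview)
Your proof is correct and follows the same route as the paper, which simply records that the result ``follows by the Mackey formula''; you have carefully unpacked that one-liner by writing out the double coset decomposition and tracking both inclusions through the colimit maps $\rho^*$.
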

\begin{proof}
    This follows by the Mackey formula. 
\end{proof}
This support classifies the localising tensor ideals as follows. Note that in \cite{kendall2024costratificationstablecategoriesinfinite} we worked over all finite subgroups, but as we have shown above we can in fact reduce to the elementary abelian subgroups, since $k$ is a field of characteristic $p > 0$.
\begin{proposition}\label{newsix}
    Let $M$ and $N$ be $kG$-modules. Then $\locm(M) = \locm(N)$ if and only if $\textup{Supp}_G(M) = \textup{Supp}_G(N)$. 
\end{proposition}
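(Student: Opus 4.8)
The plan is to reduce the claimed equivalence to the already-established classification of localising tensor ideals via support (Theorem~\ref{corlomain} and its corollary, Corollary~\ref{hbil}), combined with the detection of support on elementary abelian subgroups built into the definition of $\textup{Supp}_G$. The forward direction is the easy one: if $\locm(M) = \locm(N)$, then in particular $M \in \locm(N)$ and $N \in \locm(M)$, so by Corollary~\ref{hbil} we get $M{\downarrow}_E^G \in \locm(N{\downarrow}_E^G)$ and $N{\downarrow}_E^G \in \locm(M{\downarrow}_E^G)$ for each finite elementary abelian $E \leq G$, hence $\locm(M{\downarrow}_E^G) = \locm(N{\downarrow}_E^G)$. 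The classification over the finite group $E$ (the Benson--Iyengar--Krause stratification, as recorded implicitly through \cite{BIKloc,repstrat}) then gives $\textup{Supp}_E(M{\downarrow}_E^G) = \textup{Supp}_E(N{\downarrow}_E^G)$; applying $\rho_E^*$ and taking the union over all such $E$ yields $\textup{Supp}_G(M) = \textup{Supp}_G(N)$.

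For the converse, suppose $\textup{Supp}_G(M) = \textup{Supp}_G(N)$. The first thing I would do is observe that this does \emph{not} immediately give $\textup{Supp}_E(M{\downarrow}_E^G) = \textup{Supp}_E(N{\downarrow}_E^G)$ for each $E$, because the maps $\rho_E^*$ need not be injective — distinct points of $\textup{Proj}(H^*(E,k))$ could be identified in the colimit. So the cleaner route is to use \Cref{detectedelsub}: it suffices to show $\locm(\bigoplus_{E\in\mathcal{E}(G)} M{\downarrow}_E^G{\uparrow}_E^G) = \locm(\bigoplus_{E\in\mathcal{E}(G)} N{\downarrow}_E^G{\uparrow}_E^G)$. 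By \Cref{hbil} again, it is enough to check equality of localising tensor ideals after restricting to each finite elementary abelian $F \leq G$, and by the Mackey formula together with \Cref{inducedsupport} and \Cref{prop:supportprop}(iv), the support of $\bigoplus_{E} M{\downarrow}_E^G{\uparrow}_E^G$ restricted to $F$ is controlled by $\textup{Supp}_F$ of an explicit sum of modules induced up from subgroups $F \cap {}^gE$; one then matches these supports for $M$ and $N$ using that $\textup{Supp}_G(M) = \textup{Supp}_G(N)$ and the compatibility of $\rho^*_{-}$ with the colimit maps, and invokes the finite-group classification over $F$.

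Alternatively — and this may be the slickest formulation — one can appeal directly to \cite[Proposition~5.11]{kendall2024costratificationstablecategoriesinfinite}, cited in \Cref{subsectionofsupport}, which states precisely that this $\textup{Supp}_G$ is \emph{the} support classifying localising tensor ideals of $\tacstab(kG)$; the proposition is then a formal consequence of that classification theorem, since two modules generate the same localising tensor ideal if and only if they have the same support whenever support is a classifying support datum. I would present the proof in this streamlined way, citing the classification, and only spell out the support-comparison argument above if a self-contained verification is wanted.

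The main obstacle is the non-injectivity of the maps $\rho_E^*$ into the colimit: one cannot simply argue subgroup-by-subgroup that the restricted supports agree. This is exactly why the argument must be routed through \Cref{detectedelsub} (reducing $\locm(M)$ to an induced-module presentation on which the Mackey formula computes restricted support cleanly) or through the already-proven classification statement, rather than through a naive localisation of the finite-group result.
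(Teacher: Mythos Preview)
Your proposal is correct, and your ``alternative'' slick formulation is exactly what the paper does: it simply cites \cite[Proposition~5.11]{kendall2024costratificationstablecategoriesinfinite} (the bijection between localising tensor ideals and support sets) and notes that the proposition is an immediate formal consequence. The longer self-contained argument you sketch first is unnecessary here, though your identification of the obstacle (non-injectivity of the $\rho_E^*$) and the workaround via \Cref{detectedelsub} are sound observations.
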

\begin{proof}
    It is shown in \cite[Proposition~5.11]{kendall2024costratificationstablecategoriesinfinite} that the map sending a localising tensor ideal $\mathcal{L}$ to $\textup{Supp}_G(\mathcal{L})$ is a bijection. The result follows.
\end{proof}
Note that it is possible to define a cosupport in a completely analogous manner, which would then classify the colocalising hom-closed subcategories. 
\section{Smashing subcategories}
In this section, we consider smashing tensor ideals and the telescope conjecture for the stable category. We continue to assume that $G$ is an $\lhf$ group and $k$ is a field of characteristic $p > 0$. 
\par 
Note that it is not true in general that localising subcategories are automatically Bousfield subcategories; however we can show this is true for the stable category.
\begin{proposition}\label{genbyobj}
    Suppose $\mathcal{L}$ is a localising tensor ideal of $\tacstab(kG)$. Then $\mathcal{L}$ is generated by a set of objects.
\end{proposition}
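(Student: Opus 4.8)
The plan is to reduce to a set of modules by combining the classification of localising tensor ideals via support (\Cref{newsix}) with the observation that the support of any module can be realised by a module drawn from a fixed set. First I would recall that by \Cref{detectedelsub} we have $\locm(M) = \locm(\bigoplus_{E \in \mathcal{E}(G)} M{\downarrow}_E^G{\uparrow}_E^G)$ for every $kG$-module $M$, and by \Cref{inducedsupport} the support of each summand $M{\downarrow}_E^G{\uparrow}_E^G$ equals $\rho_E^*(\textup{Supp}_E(M{\downarrow}_E^G))$, a specialisation-closed subset of the image of $\textup{Proj}(H^*(E,k))$. So it suffices to show that for each finite elementary abelian $E \leq G$ and each specialisation-closed subset $V$ of $\textup{Proj}(H^*(E,k))$, there is a single $kE$-module $N_{E,V}$ with $\textup{Supp}_E(N_{E,V}) = V$ lying in some fixed set; this is classical for finite groups (e.g.\ take the appropriate coproduct of Rickard idempotent modules, or of Carlson's $L_\zeta$-type modules, which together with $k$ generate all specialisation-closed supports, cf.\ \cite{BCRvars, BIKloc}). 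Fixing for each $E$ such a set $\mathcal{G}_E$ of $kE$-modules realising all specialisation-closed subsets of $\textup{Proj}(H^*(E,k))$, I would let $\mathcal{G} = \{\, N{\uparrow}_E^G \mid E \in \mathcal{E}(G),\ N \in \mathcal{G}_E \,\}$, which is a set since $\mathcal{E}(G)$ is a set and each $\mathcal{G}_E$ is.

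Now given $\mathcal{L}$ a localising tensor ideal, I would argue that $\mathcal{L} = \loc\big(\{\, X \in \mathcal{G} \mid X \in \mathcal{L}\,\}\big)$ — or more precisely that $\mathcal{L}$ equals the localising tensor ideal generated by the members of $\mathcal{G}$ it contains. For the nontrivial inclusion: take $M \in \mathcal{L}$. By \Cref{inducedsupport} and the classification (\Cref{newsix}), for each $E$ there is some $N_{E} \in \mathcal{G}_E$ with $\textup{Supp}_E(N_E) = \textup{Supp}_E(M{\downarrow}_E^G)$, hence $\locm(N_E{\uparrow}_E^G) = \locm(M{\downarrow}_E^G{\uparrow}_E^G)$ by \Cref{inducedsupport} together with \Cref{newsix}. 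Since $M{\downarrow}_E^G{\uparrow}_E^G \in \mathcal{L}$ (as $\mathcal{L}$ is a tensor ideal closed under the relevant operations — restriction followed by induction keeps us inside $\mathcal{L}$ because $M{\downarrow}_E^G{\uparrow}_E^G$ is a summand-retract argument, or directly because $\mathcal{L}$ being a localising tensor ideal is closed under $-\otimes k{\uparrow}_E^G$ and $M{\downarrow}_E^G{\uparrow}_E^G$ is a retract of $M \otimes k{\uparrow}_E^G$ by the projection formula), it follows that $N_E{\uparrow}_E^G \in \mathcal{L}$, so $N_E{\uparrow}_E^G$ lies in $\mathcal{G} \cap \mathcal{L}$. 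Finally $\locm(M) = \locm(\bigoplus_E M{\downarrow}_E^G{\uparrow}_E^G) = \locm(\bigoplus_E N_E{\uparrow}_E^G)$ is contained in the localising tensor ideal generated by $\mathcal{G} \cap \mathcal{L}$, whence $M$ is too.

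The main obstacle I anticipate is the bookkeeping of the two reductions — making sure that "realises every specialisation-closed support by a module from a set" is genuinely available over each finite elementary abelian $E$ in the form needed (this is standard, but one should cite the construction of idempotent modules precisely), and checking carefully that $M{\downarrow}_E^G{\uparrow}_E^G \in \mathcal{L}$ using only that $\mathcal{L}$ is a localising \emph{tensor ideal} (the projection formula $M{\downarrow}_E^G{\uparrow}_E^G \cong M \otimes k{\uparrow}_E^G$ makes this immediate). Everything else is a formal consequence of \Cref{detectedelsub}, \Cref{inducedsupport} and \Cref{newsix}. An alternative, possibly cleaner, route avoiding any explicit realisation of supports: note the candidate generating set could instead be taken to be a set of representatives for the (set-many) localising tensor ideals of each $\tacstab(kE)$, induced up — but since those are themselves only known to be a set via the support classification, this is really the same argument.
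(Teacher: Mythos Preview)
Your approach has a genuine gap: you claim that $\textup{Supp}_E(M{\downarrow}_E^G)$ is specialisation-closed, but this is false for arbitrary $M$. Support of a non-compact module over a finite group can be any subset of $\textup{Proj}(H^*(E,k))$; for instance the Rickard idempotent $\Gamma_{\mathfrak{p}}k$ has support $\{\mathfrak{p}\}$ for a non-closed point $\mathfrak{p}$. Your $L_\zeta$-type modules only realise specialisation-closed supports, so as written the set $\mathcal{G}_E$ misses most localising ideals of $\tacstab(kE)$ and your generating set is too small.

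The fix is the one you already sketch at the end: drop the word ``specialisation-closed'' and let $\mathcal{G}_E$ contain one module for every subset of $\textup{Proj}(H^*(E,k))$ (equivalently, one representative for each localising tensor ideal of $\tacstab(kE)$). This is still a set, and stratification for finite groups guarantees such representatives exist. With that change your argument goes through.

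Once repaired, your argument and the paper's follow the same skeleton---reduce to each elementary abelian $E$, produce a set of generators there, induce up, and reassemble via \Cref{detectedelsub}---but differ in how the set over each $E$ is obtained. You invoke the full support classification (\Cref{newsix}) to parametrise the localising ideals of $\tacstab(kE)$ by subsets of a fixed set. The paper instead argues more abstractly: each $\locm(\mathcal{L}{\downarrow}_E^G)$ is a Bousfield subcategory of $\tacstab(kE)$ (citing \cite{repstrat}), and Bousfield subcategories of compactly generated categories are always set-generated (citing \cite{KSprojline}). The paper's route needs slightly less as input and never names the generators; yours is more explicit but leans on stratification, which is a heavier tool than strictly required here.
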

\begin{proof}
    Let $E \leq G$ be a finite elementary abelian subgroup. We know from \cite[Lemma~11.11]{repstrat} that $\locm(\mathcal{L}{\downarrow}_E^G)$ is a Bousfield subcategory, noting that the authors of \cite{repstrat} use the term strictly localising subcategory instead of Bousfield subcategory. Hence $\locm(\mathcal{L}{\downarrow}_E^G)$ is generated by a set of objects by \cite[Lemma~3.3.1]{KSprojline}. We let $\mathcal{C}_E$ be a set of generators for $\locm(\mathcal{L}{\downarrow}_E^G)$ for each finite elementary abelian subgroup $E \leq G$. We claim that $\mathcal{L}$ is generated by all modules of the form $M{\uparrow}_E^G$ for $M \in \mathcal{C}_E$ for some finite elementary abelian subgroup $E \leq G$, which shows that it is generated by a set. 
    \par 
    Firstly, recall from \cite[Lemma~3.6]{kendall2024costratificationstablecategoriesinfinite} that $\locm(\mathcal{L}{\downarrow}_E^G)$ coincides with the collection of all $kE$-modules $X$ such that $X{\uparrow}_E^G \in \mathcal{L}$. It follows that $M{\uparrow}_E^G \in \mathcal{L}$ and hence the localising tensor ideal generated by all such $M{\uparrow}_E^G$ is contained in $\mathcal{L}$. We are therefore left to show the other inclusion. Suppose that $X \in \mathcal{L}$. By definition, we know that $X{\downarrow}_E^G \in \locm(\mathcal{C}_E)$. Hence, from \Cref{isol} we know that $X{\downarrow}_E^G{\uparrow}_E^G \in \locm(\mathcal{C}_E{\uparrow}_E^G)$. This is true for all finite elementary abelian $E \leq G$ and so by \Cref{detectedelsub} we conclude that $X$ is in the subcategory generated by all modules of the form $M{\uparrow}_E^G$ for $M \in \mathcal{C}_E$ for some finite elementary abelian subgroup $E \leq G$. 
\end{proof}
\begin{corollary}
    Every localising tensor ideal of $\tacstab(kG)$ is a Bousfield subcategory.
\end{corollary}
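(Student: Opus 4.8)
The plan is to deduce this immediately from Proposition~\ref{genbyobj} together with the standard existence theorem for Bousfield localisations in compactly generated triangulated categories. First I would observe that, by Proposition~\ref{genbyobj}, any localising tensor ideal $\mathcal{L}$ of $\tacstab(kG)$ is of the form $\loc(\mathcal{S})$ for some \emph{set} $\mathcal{S}$ of objects. Second, since $\tacstab(kG)$ is compactly generated by Proposition~\ref{comkin}, it is in particular well-generated, and any localising subcategory of a well-generated triangulated category which is generated by a set of objects is a Bousfield subcategory; this is classical, see for instance \cite{Kloc} (compare also the use of \cite[Lemma~3.3.1]{KSprojline} in the proof of Proposition~\ref{genbyobj}, which records the reverse implication). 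Applying this to $\mathcal{L} = \loc(\mathcal{S})$ produces, for each $t \in \tacstab(kG)$, the functorial triangle $\Gamma_{\mathcal{L}}(t) \to t \to L_{\mathcal{L}}(t) \to \Sigma\Gamma_{\mathcal{L}}(t)$ with $\Gamma_{\mathcal{L}}(t) \in \mathcal{L}$ and $L_{\mathcal{L}}(t) \in \mathcal{L}^{\perp}$, which is exactly the statement that $\mathcal{L}$ is a Bousfield subcategory.

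I would emphasise that the tensor-ideal hypothesis on $\mathcal{L}$ plays no role in this final step: it is used only earlier, to guarantee via Proposition~\ref{genbyobj} that $\mathcal{L}$ is generated by a set, and it becomes relevant again only when one asks whether the resulting localisation is smashing (as in Proposition~\ref{smashingdefinition}). Consequently there is no genuine obstacle here; the substantive content has already been carried out in Proposition~\ref{genbyobj}, and the corollary is a one-line application of the Bousfield localisation theorem for set-generated localising subcategories of a compactly generated triangulated category.
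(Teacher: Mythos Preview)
Your proposal is correct and matches the paper's own proof essentially verbatim: the paper also deduces the corollary from Proposition~\ref{genbyobj} together with compact generation (Proposition~\ref{comkin}), invoking \cite[Theorem~7.2.1]{Kloc} to see that $\mathcal{L}$ is well generated and then \cite[Theorem~5.2.1]{Kloc} for the Bousfield localisation. Your additional remark that the tensor-ideal hypothesis is only used to invoke Proposition~\ref{genbyobj} is accurate and worth keeping.
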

\begin{proof}
    Given \Cref{genbyobj}, this is essentially a consequence of Brown representability. Since every localising tensor ideal $\mathcal{L}$ is generated by a set of objects, and $\tacstab(kG)$ is compactly generated, it follows from \cite[Theorem~7.2.1]{Kloc} that $\mathcal{L}$ is well generated. The result now follows from \cite[Theorem~5.2.1]{Kloc}. 
\end{proof}
This means that given any localising tensor ideal subcategory $\mathcal{S}$, we can consider the associated Bousfield localisation functor. We can therefore ask whether or not a localising tensor ideal is smashing. 
\begin{definition}
    We let $\textup{Smashid}(G)$ denote the poset of smashing ideals of $\tacstab(kG)$.  
\end{definition}
We now move on to showing that smashing ideals are generated by induced modules. We first isolate a simple argument that we will use a few times. We use the notation $\loc(M \otimes \tacstab(kG))$ as shorthand for $\loc(\{M \otimes X\ |\ X \in \tacstab(kG)\})$.
\begin{lemma}\label{lop}
    Let $M$ be a $kG$-module. Then for any $kG$-module $N$ we have $N \in \loc_{\otimes}(M)^{\perp}$ if and only if $\tachom_{kG}(M\otimes X,N) = 0$ for all $kG$-modules $X$.
    \par 
    Furthermore, suppose we have a collection $\{M_i\}_{i \in I}$ of $kG$-modules. Then $N \in \locm(\{M_i\}_{i \in I})^{\perp}$ if and only if $\tachom_{kG}(M_i \otimes X,N) = 0$ for all $kG$-modules $X$ and $i \in I$.
\end{lemma}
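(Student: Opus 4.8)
The plan is to reduce both statements to the single observation that $\locm(M)$ coincides with $\loc(M\otimes\tacstab(kG))$, after which the lemma becomes a formal manipulation of orthogonal subcategories.

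First I would verify this identification. The inclusion $\loc(M\otimes\tacstab(kG))\subseteq\locm(M)$ is immediate, since $\locm(M)$ is a localising tensor ideal containing $M$ and hence contains every $M\otimes X$. For the reverse inclusion it suffices to show that $\loc(M\otimes\tacstab(kG))$ is itself a tensor ideal: consider the full subcategory of those $C\in\tacstab(kG)$ with $C\otimes Y\in\loc(M\otimes\tacstab(kG))$ for every $Y$. As $-\otimes Y$ is exact and commutes with coproducts, this subcategory is localising, and it contains each $M\otimes X$ because $(M\otimes X)\otimes Y\cong M\otimes(X\otimes Y)$; therefore it contains $\loc(M\otimes\tacstab(kG))$, which is precisely the assertion that the latter is a tensor ideal. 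Since $M\cong M\otimes\mathbbm{1}$ lies in it, minimality of $\locm(M)$ forces equality.

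Granting this, the ``only if'' direction is immediate: if $N\in\locm(M)^{\perp}$ then, as $M\otimes X\in\locm(M)$ for every $X$, we get $\tachom_{kG}(M\otimes X,N)=0$. For the ``if'' direction, assume $\tachom_{kG}(M\otimes X,N)=0$ for all $X$ and let $\mathcal{D}$ be the full subcategory of objects $S$ with $\tachom_{kG}(S,N)=0$. This is triangulated and, because $\tachom_{kG}(\bigoplus_{\alpha}S_{\alpha},N)\cong\prod_{\alpha}\tachom_{kG}(S_{\alpha},N)$, closed under coproducts, so it is localising; by hypothesis it contains every $M\otimes X$, hence all of $\loc(M\otimes\tacstab(kG))=\locm(M)$, giving $N\in\locm(M)^{\perp}$.

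The statement for a collection $\{M_i\}_{i\in I}$ is handled identically, replacing $M\otimes\tacstab(kG)$ throughout by $\bigcup_{i\in I} M_i\otimes\tacstab(kG)$: one checks exactly as above that $\loc$ of this family is a tensor ideal and therefore equals $\locm(\{M_i\}_{i\in I})$, and the orthogonality argument is unchanged. I do not expect a genuine obstacle here; the only point needing a moment's care is the verification that $\loc(M\otimes\tacstab(kG))$ is closed under tensoring with arbitrary objects, which is the standard ``the ideal generated by a set is $\loc$ of its obvious saturation'' argument.
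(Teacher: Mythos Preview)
Your proof is correct and follows essentially the same approach as the paper: both reduce to the identity $\locm(M)=\loc(M\otimes\tacstab(kG))$ and then use that $\{S:\tachom_{kG}(S,N)=0\}$ is a localising subcategory. The only difference is that the paper cites this identity from \cite[Lemma~2.1]{BENSON2011953} while you prove it directly, making your argument slightly more self-contained.
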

\begin{proof}
    The forward implication is obvious, so assume that $\tachom_{kG}(M\otimes X,N) = 0$ for all $kG$-modules $X$. First, we note that by \cite[Lemma~2.1]{BENSON2011953} we know that $\locm(M) = \loc(M \otimes \tacstab(kG))$. 
    \par 
    Consider the full subcategory of $kG$-modules $Y$ such that $\tachom_{kG}(Y,N) = 0$. This is a localising subcategory which, by assumption, contains $M \otimes X$ for each $kG$-module $X$. Therefore, it contains $\loc(M \otimes \tacstab(kG))$. It follows that $N \in \loc(M \otimes \tacstab(kG))^{\perp} = \locm(M)^{\perp}$ as desired. 
    \par 
    The second statement follows in the same manner.
\end{proof}
\begin{lemma}\label{hop}
    Suppose $\mathcal{L}$ is a localising tensor ideal generated by modules of the form $N{\uparrow}_E^G$, where $E \leq G$ is a finite elementary abelian subgroup and $N$ is a finitely generated $kE$-module. Then $\mathcal{L}$ is smashing.
\end{lemma}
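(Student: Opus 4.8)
The plan is to show that $\mathcal{L}^{\perp}$ is a localising tensor ideal, which by \Cref{smashingdefinition} suffices. Since $\mathcal{L}^{\perp}$ is automatically triangulated, closed under products and summands, the only thing to verify is that it is closed under coproducts and under tensoring with arbitrary objects. The tensor-ideal condition is the substantive one: I must show that if $N \in \mathcal{L}^{\perp}$ then $Z \otimes N \in \mathcal{L}^{\perp}$ for every $kG$-module $Z$. Using \Cref{lop}, $N \in \mathcal{L}^{\perp}$ is equivalent to $\tachom_{kG}(N_i{\uparrow}_E^G \otimes X, N) = 0$ for all the generators $N_i{\uparrow}_E^G$ and all $X$, and similarly $Z \otimes N \in \mathcal{L}^{\perp}$ is equivalent to $\tachom_{kG}(N_i{\uparrow}_E^G \otimes X, Z \otimes N) = 0$ for all $X$.

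The key step is to reduce to the finite subgroup $E$ using adjunction. By the projection formula (Frobenius reciprocity for induction), $N{\uparrow}_E^G \otimes X \cong (N \otimes X{\downarrow}_E^G){\uparrow}_E^G$, so by the induction–restriction adjunction
\[\tachom_{kG}(N{\uparrow}_E^G \otimes X, Z \otimes N) \cong \tachom_{kE}(N \otimes X{\downarrow}_E^G, (Z \otimes N){\downarrow}_E^G) \cong \tachom_{kE}(N \otimes X{\downarrow}_E^G, Z{\downarrow}_E^G \otimes N{\downarrow}_E^G).\]
Now $N$ is a \emph{finitely generated} $kE$-module, hence dualisable in $\tacstab(kE)$ since $E$ is a finite group. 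Therefore $N \otimes -$ has a right adjoint $N^* \otimes -$ on $\tacstab(kE)$, and more importantly, tensoring with the dualisable object $N$ lets us move it across the hom: the above is isomorphic to $\tachom_{kE}(X{\downarrow}_E^G, N^* \otimes Z{\downarrow}_E^G \otimes N{\downarrow}_E^G)$. The point is that the vanishing for all $X$ of the $G$-level hom into $N$ forces, after restriction and using that $\{X{\downarrow}_E^G : X \in \tacstab(kG)\}$ together with the tensor-ideal generated is enough (via \Cref{detectedelsub} / \Cref{hbil}), the vanishing of the corresponding hom into $Z \otimes N$.

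More precisely, I would argue as follows: the hypothesis $N \in \mathcal{L}^{\perp}$ combined with \Cref{lop} and the adjunction gives $\tachom_{kE}(N \otimes Y, N{\downarrow}_E^G) = 0$ for every $Y$ of the form $X{\downarrow}_E^G$; but since $N$ is dualisable over $kE$ this already says $N{\downarrow}_E^G \in \loc(N \otimes \{X{\downarrow}_E^G\})^{\perp}$, and one checks $\locm(N) \subseteq \loc(N \otimes \{X{\downarrow}_E^G : X\})$ holds after restriction because restriction is essentially surjective up to summands of induced modules — alternatively, one works directly at the $G$-level: $\locm(N{\uparrow}_E^G)^{\perp}$ is a \emph{tensor ideal} because $N{\uparrow}_E^G$ is dualisable in $\tacstab(kG)$. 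This last observation is the cleanest route: a finitely generated $kE$-module $N$ with $E$ finite is dualisable over $kE$, and induction of a dualisable object along the finite-index-free but finite-stabiliser situation — here one uses that $k{\uparrow}_E^G$ is compact and the dual of $N{\uparrow}_E^G$ is computable — yields a dualisable object of $\tacstab(kG)$; hence $\locm(N{\uparrow}_E^G) = \thick_{\otimes}(N{\uparrow}_E^G)$-generated smashing ideal by the standard Balmer–Favi argument, i.e. $\locm(N{\uparrow}_E^G)^{\perp}$ is a tensor ideal, and a join (localising subcategory generated by a union) of smashing tensor ideals whose right orthogonals are tensor ideals again has tensor-ideal right orthogonal. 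Closure of $\mathcal{L}^{\perp}$ under coproducts is then immediate, so $\mathcal{L}$ is smashing.

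The main obstacle is verifying that $N{\uparrow}_E^G$ is dualisable in $\tacstab(kG)$ when $N$ is a finitely generated $kE$-module and $E \leq G$ is finite — equivalently, pinning down $(N{\uparrow}_E^G)^*$ and checking the rigidity isomorphism. I expect this follows from the interplay of the Mackey formula, the fact that $k{\uparrow}_E^G$ is compact (\Cref{comkin}) and dualisable, and that coinduction and induction agree up to the relevant finiteness; if $N{\uparrow}_E^G$ turns out not to be dualisable in general, the fallback is to work entirely with the orthogonality computation above, checking tensor-ideal closure of $\mathcal{L}^{\perp}$ hom-class by hom-class via the $E$-level adjunction and the dualisability of $N$ over $kE$, which only ever requires finite-group facts after restriction.
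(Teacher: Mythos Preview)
Your ``cleanest route'' does not work: $N{\uparrow}_E^G$ need \emph{not} be dualisable in $\tacstab(kG)$ for general $\lhf$ groups. Indeed, the paper explicitly notes (in the introduction) that for $\hf$ groups of type $\fpinf$ there exist compact objects which are not dualisable, and the modules $k{\uparrow}_E^G$ are natural candidates for such. If induced modules were always dualisable, this lemma would be an immediate consequence of \Cref{tel2} and would not need a separate argument.

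Your fallback is on the right track but does not reach the key simplification. After the projection formula and adjunction you correctly arrive at
\[
\tachom_{kE}(Z{\downarrow}_E^G,\, N^* \otimes X{\downarrow}_E^G) = 0 \quad\text{for all } kG\text{-modules } Z,
\]
but you then try to argue about which $kE$-modules arise as restrictions. The point you are missing is that you may simply take $Z = k$: since $E$ is an elementary abelian $p$-group, the trivial module $k$ is a compact generator of $\tacstab(kE)$, so this single vanishing already forces $N^* \otimes X{\downarrow}_E^G \cong 0$. Thus
\[
\locm(N{\uparrow}_E^G)^{\perp} = \{\, X \in \tacstab(kG) \mid N^* \otimes X{\downarrow}_E^G \cong 0 \,\},
\]
and this description is manifestly closed under tensoring with arbitrary $kG$-modules (and under coproducts), giving the smashing property directly. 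This is exactly how the paper proceeds. (Incidentally, your write-up uses $N$ both for the finitely generated $kE$-module generating $\mathcal{L}$ and for a test object in $\mathcal{L}^{\perp}$, which makes the displayed computation hard to parse.)
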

\begin{proof}
    We begin by assuming we have a localising tensor ideal of the form $\locm(N{\uparrow}_E^G)$, where $E \leq G$ is a finite elementary abelian subgroup and $N$ is a finitely generated $kE$-module. Suppose that we have $X \in \locm(N{\uparrow}_E^G)^{\perp}$ and $Y$ is any $kG$-module. 
    \par 
    We claim that $X \in \locm(N{\uparrow}_E^G)^{\perp}$ if and only if $N^* \otimes X{\downarrow}_E^G \cong 0$. From this it follows immediately that $\locm(N{\uparrow}_E^G)^{\perp}$ is tensor ideal, and hence $\locm(N{\uparrow}_E^G)$ is smashing. 
    \par 
    For the claim, we have the following implications. 
    \begin{align}
        X \in \locm(N{\uparrow}_E^G)^{\perp} &\iff \tachom_{kG}(Z \otimes N{\uparrow}_E^G,X) = 0\textup{ for all }kG\textup{-modules }Z\\ &\iff\tachom_{kE}(Z{\downarrow}_E^G\otimes N, X{\downarrow}_E^G) = 0 \textup{ for all }kG\textup{-modules }Z\\ &\iff \tachom_{kE}(Z{\downarrow}_E^G,N^*\otimes X{\downarrow}_E^G) = 0 \textup{ for all }kG\textup{-modules }Z\\ &\iff N^*\otimes X{\downarrow}_E^G \cong 0
    \end{align}
    The first equivalence follows from \Cref{lop}. The second equivalence uses the isomorphism $Z\otimes N{\uparrow}_E^G \cong (Z{\downarrow}_E^G\otimes N){\uparrow}_E^G$ along with restriction-induction adjunction. The third equivalence uses that $N$ is dualisable in $\tacstab(kE)$ and the final equivalence follows since we may choose $Z = k$, and $k$ is a compact generator in the stable category of an elementary abelian $p$-group. 
    \par 
    The case where $\mathcal{L}$ is generated by a set of modules of the form $N{\uparrow}_E^G$ follows by a similar argument.
\end{proof}
Before we move on, we introduce the following notation. 
\begin{definition}
    Let $F \leq G$ be a subgroup and suppose $\mathcal{T}$ is a full subcategory of $\tacstab(kG)$. We let $\textup{Ind}^{-1}_{G,F}(\mathcal{T})$ denote the full subcategory of $\tacstab(kF)$ consisting of all $kF$-modules $M$ such that $M{\uparrow}_F^G \in \mathcal{T}$.
    \par 
    We define $\textup{Coind}^{-1}_{G,F}(\mathcal{T})$ in a similar way.
\end{definition}
Note that if $\mathcal{T}$ is a localising tensor ideal, then $\textup{Ind}^{-1}_{G,F}(\mathcal{T})$ is also a localising tensor ideal, as shown in \cite[Proposition~3.5]{kendall2024costratificationstablecategoriesinfinite}. Similarly, if $\mathcal{T}$ is a colocalising hom closed subcategory, then $\textup{Coind}^{-1}_{G,F}(\mathcal{T})$ is also a colocalising hom closed subcategory, which is shown in \cite[Proposition~4.2]{kendall2024costratificationstablecategoriesinfinite}.
\begin{lemma}\label{bsl}
    Suppose $\mathcal{T}$ is a localising tensor ideal and a colocalising hom-closed subcategory. Then $\textup{Ind}^{-1}_{G,E}(\mathcal{T}) = \textup{Coind}^{-1}_{G,E}(\mathcal{T})$ for each finite elementary abelian subgroup $E \leq G$.
\end{lemma}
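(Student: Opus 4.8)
The plan is to prove the two inclusions $\textup{Ind}^{-1}_{G,E}(\mathcal{T})\subseteq\textup{Coind}^{-1}_{G,E}(\mathcal{T})$ and $\textup{Coind}^{-1}_{G,E}(\mathcal{T})\subseteq\textup{Ind}^{-1}_{G,E}(\mathcal{T})$ separately, using exactly one of the two structural hypotheses on $\mathcal{T}$ in each direction. The common mechanism: if $M$ is a $kE$-module, then after inducing (or coinducing) $M$ to $G$ and restricting back to $E$, the module $M$ reappears as a direct summand; since $\mathcal{T}$ is a localising subcategory it is closed under direct summands, and this lets us move membership in $\mathcal{T}$ back and forth between $M{\uparrow}_E^G$ and $M{\Uparrow}_E^G$.

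First I would record the ingredients. Because induction is left adjoint to restriction and restriction is left adjoint to coinduction, the unit $M\to M{\uparrow}_E^G{\downarrow}_E^G$ is a split monomorphism and the counit $M{\Uparrow}_E^G{\downarrow}_E^G\to M$ is a split epimorphism (equivalently, read off the identity double coset in the Mackey formula); hence $M{\uparrow}_E^G{\downarrow}_E^G\cong M\oplus R$ and $M{\Uparrow}_E^G{\downarrow}_E^G\cong M\oplus R'$ for suitable $kE$-modules $R,R'$. Second, the projection formula gives $Y\otimes k{\uparrow}_E^G\cong (Y{\downarrow}_E^G){\uparrow}_E^G$, and a short Yoneda computation (using the projection formula together with the two adjunctions) gives the internal-hom identity $\tachom_{kG}(k{\uparrow}_E^G,B)\cong (B{\downarrow}_E^G){\Uparrow}_E^G$. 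Finally, induction and coinduction are additive, so they carry the decompositions $M\oplus R$ and $M\oplus R'$ to $M{\uparrow}_E^G\oplus R{\uparrow}_E^G$ and $M{\Uparrow}_E^G\oplus R'{\Uparrow}_E^G$.

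For $\textup{Ind}^{-1}_{G,E}(\mathcal{T})\subseteq\textup{Coind}^{-1}_{G,E}(\mathcal{T})$: suppose $M{\uparrow}_E^G\in\mathcal{T}$. Since $\mathcal{T}$ is hom-closed, $\tachom_{kG}(k{\uparrow}_E^G,M{\uparrow}_E^G)\in\mathcal{T}$, and by the internal-hom identity this object is $(M{\uparrow}_E^G{\downarrow}_E^G){\Uparrow}_E^G\cong M{\Uparrow}_E^G\oplus R{\Uparrow}_E^G$; as $\mathcal{T}$ is closed under summands, $M{\Uparrow}_E^G\in\mathcal{T}$. For the reverse inclusion: suppose $M{\Uparrow}_E^G\in\mathcal{T}$. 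Since $\mathcal{T}$ is a tensor ideal, $M{\Uparrow}_E^G\otimes k{\uparrow}_E^G\in\mathcal{T}$, and by the projection formula this is $(M{\Uparrow}_E^G{\downarrow}_E^G){\uparrow}_E^G\cong M{\uparrow}_E^G\oplus R'{\uparrow}_E^G$, so $M{\uparrow}_E^G\in\mathcal{T}$.

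I do not expect a serious obstacle; the argument is formal and short. The one point to handle with care is the bookkeeping in the Mackey decompositions: $R$ and $R'$ are the contributions of the non-trivial double cosets and are genuinely infinite (co)products, but they are nonetheless single objects, so applying the additive functors $-{\uparrow}_E^G$ and $-{\Uparrow}_E^G$ splits $M$ off cleanly. It is also worth noting that this argument uses neither that $E$ is elementary abelian (only that $E$ is finite) nor the classification of (co)localising subcategories by support.
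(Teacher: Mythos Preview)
Your proof is correct and takes a genuinely different, more elementary route than the paper. The paper argues via the classification results from \cite{kendall2024costratificationstablecategoriesinfinite}: it invokes \Cref{hbil} to reduce membership in $\colocm(M{\uparrow}_E^G)$ to a check on restrictions to all finite elementary abelian subgroups $F\leq G$, then uses the Mackey formula at each $F$ to conclude that $M{\Uparrow}_E^G\in\colocm(M{\uparrow}_E^G)\subseteq\mathcal{T}$ (and dually with $\locm$). In contrast, you bypass the classification entirely: you manufacture $M{\Uparrow}_E^G$ directly as a summand of $\textup{hom}(k{\uparrow}_E^G,M{\uparrow}_E^G)\in\mathcal{T}$ via the internal-hom identity, and $M{\uparrow}_E^G$ as a summand of $k{\uparrow}_E^G\otimes M{\Uparrow}_E^G\in\mathcal{T}$ via the projection formula. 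This is shorter, uses only the ambient tensor-triangulated structure plus the single-coset split in Mackey, and (as you note) needs neither that $E$ is elementary abelian nor any stratification input. The paper's route does yield the slightly stronger intermediate statements $M{\Uparrow}_E^G\in\colocm(M{\uparrow}_E^G)$ and $M{\uparrow}_E^G\in\locm(M{\Uparrow}_E^G)$, but for the lemma as stated your argument is the cleaner one.
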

\begin{proof}
    We showed in \Cref{hbil} that for any $kG$-modules $X$ and $Y$ we have $X \in \colocm(Y)$ if and only if $X{\downarrow}_E^G \in \colocm(Y{\downarrow}_E^G)$ for each finite elementary abelian subgroup $E \leq G$. 
    \par 
    Let $M$ be a $kE$-module such that $M{\uparrow}_E^G \in \mathcal{T}$. By the Mackey formula, we know that $M{\Uparrow}_E^G{\downarrow}_F^G \in \colocm(M{\uparrow}_E^G{\downarrow}_F^G)$ for every finite elementary abelian subgroup $F \leq G$. Therefore, we know that $M{\Uparrow}_E^G \in \colocm(M{\uparrow}_E^G)$. Since $\mathcal{T}$ is a colocalising hom-closed subcategory containing $M{\uparrow}_E^G$ by assumption, we know that $\colocm(M{\uparrow}_E^G) \subseteq \mathcal{T}$. It follows that $M \in \textup{Coind}^{-1}_{G,E}(\mathcal{T})$ and so $\textup{Ind}^{-1}_{G,E}(\mathcal{T}) \subseteq \textup{Coind}^{-1}_{G,E}(\mathcal{T})$.
    \par 
    A similar argument shows the inclusion $\textup{Coind}^{-1}_{G,E}(\mathcal{T}) \subseteq \textup{Ind}^{-1}_{G,E}(\mathcal{T})$ and hence we find the claimed equality. 
\end{proof}
\begin{lemma}\label{lemmafromoth}
    Suppose $\mathcal{S}$ is a localising tensor ideal and $E \leq G$ a finite elementary abelian subgroup. Then $\textup{Coind}_{G,E}^{-1}(\mathcal{S}^{\perp}) = (\textup{Ind}^{-1}_{G,E}(\mathcal{S}))^{\perp}$.
\end{lemma}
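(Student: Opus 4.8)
The plan is to prove the two inclusions separately, using the induction–restriction adjunction on the stable category (Lemma~5.1-style fact) together with the characterisation of $\locm(-)^\perp$ from \Cref{lop}. First I would unwind both sides: an object $N \in \tacstab(kE)$ lies in $\textup{Coind}_{G,E}^{-1}(\mathcal{S}^\perp)$ precisely when $N{\Uparrow}_E^G \in \mathcal{S}^\perp$, i.e. $\tachom_{kG}(S, N{\Uparrow}_E^G) = 0$ for all $S \in \mathcal{S}$; by the restriction–coinduction adjunction this is equivalent to $\tachom_{kE}(S{\downarrow}_E^G, N) = 0$ for all $S \in \mathcal{S}$. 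On the other side, $N \in (\textup{Ind}^{-1}_{G,E}(\mathcal{S}))^\perp$ means $\tachom_{kE}(M, N) = 0$ for every $kE$-module $M$ with $M{\uparrow}_E^G \in \mathcal{S}$.

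For the inclusion $(\textup{Ind}^{-1}_{G,E}(\mathcal{S}))^\perp \subseteq \textup{Coind}_{G,E}^{-1}(\mathcal{S}^\perp)$: given $N$ in the left-hand side, I must check $\tachom_{kE}(S{\downarrow}_E^G, N) = 0$ for all $S \in \mathcal{S}$. Since $\mathcal{S}$ is a localising tensor ideal, for $S \in \mathcal{S}$ we have $S{\downarrow}_E^G{\uparrow}_E^G \cong S \otimes k{\uparrow}_E^G \in \mathcal{S}$ by the Mackey/projection formula, so $S{\downarrow}_E^G \in \textup{Ind}^{-1}_{G,E}(\mathcal{S})$, whence $\tachom_{kE}(S{\downarrow}_E^G, N) = 0$ as required. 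For the reverse inclusion $\textup{Coind}_{G,E}^{-1}(\mathcal{S}^\perp) \subseteq (\textup{Ind}^{-1}_{G,E}(\mathcal{S}))^\perp$: given $N$ with $N{\Uparrow}_E^G \in \mathcal{S}^\perp$, and any $kE$-module $M$ with $M{\uparrow}_E^G \in \mathcal{S}$, I want $\tachom_{kE}(M,N)=0$. Here I would use the adjunction the other way: $\tachom_{kE}(M, N) = \tachom_{kE}(M, (N{\Uparrow}_E^G){\downarrow}_E^G \text{ summand})$ — more carefully, since $M{\uparrow}_E^G \in \mathcal{S}$ and $N{\Uparrow}_E^G \in \mathcal{S}^\perp$ we get $\tachom_{kG}(M{\uparrow}_E^G, N{\Uparrow}_E^G) = 0$, and by double adjunction $\tachom_{kG}(M{\uparrow}_E^G, N{\Uparrow}_E^G) \cong \tachom_{kE}(M{\uparrow}_E^G{\downarrow}_E^G, N)$, which contains $\tachom_{kE}(M,N)$ as a summand via the unit $M \to M{\uparrow}_E^G{\downarrow}_E^G$ of the induction–restriction adjunction (this splits because restriction of an induced module contains the original as a summand, $E$ being its own "diagonal" double coset representative).

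The main obstacle I anticipate is keeping the adjunctions straight: one needs both that restriction is right adjoint to induction and left adjoint to coinduction, and one needs that $M{\uparrow}_E^G{\downarrow}_E^G$ splits off a copy of $M$ in the stable category, so that killing maps out of the bigger module kills maps out of $M$. This retract fact follows from the Mackey formula applied to the trivial double coset, but care is needed to ensure it holds at the level of the stable category and not just the module category; alternatively one can avoid it by noting that $\textup{Ind}^{-1}_{G,E}(\mathcal{S})$ is itself a localising tensor ideal (cited after the definition) and applying \Cref{lop} to reduce perp-conditions to generators of a convenient form, mirroring the structure of the proof of \Cref{hop}.
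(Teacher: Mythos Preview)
Your proposal is correct and follows essentially the same approach as the paper's proof: both inclusions are handled exactly as the paper does, using the restriction--coinduction adjunction together with (i) the observation that $S{\downarrow}_E^G \in \textup{Ind}^{-1}_{G,E}(\mathcal{S})$ because $\mathcal{S}$ is a tensor ideal, and (ii) the fact that $M$ is a summand of $M{\uparrow}_E^G{\downarrow}_E^G$. Your worry about the retract holding stably is unfounded---a module-level splitting descends to the stable category---so the alternative route via \Cref{lop} is unnecessary, and the paper does not take it.
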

\begin{proof}
    The dual result is shown during the proof of \cite[Proposition~4.11]{kendall2024costratificationstablecategoriesinfinite}. We include the details here for completeness. Suppose first that we have some $kE$-module $X$ such that $X{\Uparrow}_E^G \in \mathcal{S}^{\perp}$ and let $Y$ be such that $Y{\uparrow}_E^G \in \mathcal{S}$. 
    \par 
    \sloppy We need to show that $\tachom_{kE}(Y,X) = 0$. First, note that by assumption we have $\tachom_{kG}(Y{\uparrow}_E^G,X{\Uparrow}_E^G) = 0$. It follows by adjunction that $\tachom_{kE}(Y{\uparrow}_E^G{\downarrow}_E^G,X) = 0$ and since $Y$ is a summand of $Y{\uparrow}_E^G{\downarrow}_E^G$, we conclude that $\tachom_{kE}(Y,X) = 0$ as desired. 
    \par 
    On the other hand, suppose $M \in (\textup{Ind}^{-1}_{G,E}(\mathcal{S}))^{\perp}$ and that $N \in \mathcal{S}$. We need to show that $\tachom_{kG}(N,M{\Uparrow}_E^G) = 0$. Note that $N{\downarrow}_E^G{\uparrow}_E^G \in \mathcal{S}$, since $\mathcal{S}$ is tensor ideal, and so by assumption $\tachom_{kG}(N{\downarrow}_E^G,M) = 0$. By adjunction, we now conclude immediately that $\tachom_{kG}(N,M{\Uparrow}_E^G) = 0$ as desired.
\end{proof}
\begin{lemma}\label{smpr}
    Suppose that $\mathcal{S}$ is a smashing subcategory. Then $\textup{Ind}^{-1}_{G,E}(\mathcal{S})$ is also smashing.
\end{lemma}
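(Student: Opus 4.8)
The plan is to verify condition (i) of \Cref{smashingdefinition} for $\textup{Ind}^{-1}_{G,E}(\mathcal{S})$, namely that its right orthogonal is a localising tensor ideal of $\tacstab(kE)$. First I would record that $\textup{Ind}^{-1}_{G,E}(\mathcal{S})$ is itself a localising tensor ideal (by \cite[Proposition~3.5]{kendall2024costratificationstablecategoriesinfinite}), hence a Bousfield subcategory by the corollary following \Cref{genbyobj}, so that \Cref{smashingdefinition} does apply and it genuinely suffices to control the orthogonal.

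The key observation is that $\mathcal{S}^{\perp}$ is simultaneously a localising tensor ideal and a colocalising hom-closed subcategory. It is a localising tensor ideal precisely because $\mathcal{S}$ is assumed smashing, i.e.\ condition (i) of \Cref{smashingdefinition} for $\mathcal{S}$. It is colocalising (triangulated and product-closed) because it is a right orthogonal, and it is hom-closed by the following short adjunction argument: for $X \in \mathcal{S}^{\perp}$, any $Y \in \tacstab(kG)$ and any $S \in \mathcal{S}$ one has $\tachom_{kG}(S,\textup{hom}(Y,X)) \cong \tachom_{kG}(S\otimes Y, X) = 0$, since $S\otimes Y \in \mathcal{S}$ as $\mathcal{S}$ is tensor ideal; hence $\textup{hom}(Y,X) \in \mathcal{S}^{\perp}$.

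With this in hand the rest is a chain of the lemmas already established. Since $\mathcal{S}^{\perp}$ is both a localising tensor ideal and colocalising hom-closed, \Cref{bsl} gives $\textup{Ind}^{-1}_{G,E}(\mathcal{S}^{\perp}) = \textup{Coind}^{-1}_{G,E}(\mathcal{S}^{\perp})$. On the other hand, \Cref{lemmafromoth} gives $\textup{Coind}^{-1}_{G,E}(\mathcal{S}^{\perp}) = (\textup{Ind}^{-1}_{G,E}(\mathcal{S}))^{\perp}$. Combining, $(\textup{Ind}^{-1}_{G,E}(\mathcal{S}))^{\perp} = \textup{Ind}^{-1}_{G,E}(\mathcal{S}^{\perp})$, and the right-hand side is a localising tensor ideal by \cite[Proposition~3.5]{kendall2024costratificationstablecategoriesinfinite} applied to the localising tensor ideal $\mathcal{S}^{\perp}$. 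Thus $\textup{Ind}^{-1}_{G,E}(\mathcal{S})$ satisfies \Cref{smashingdefinition}(i) and is smashing.

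The only genuinely new input is the remark that $\mathcal{S}^{\perp}$ is hom-closed (so that \Cref{bsl} is applicable); this is where I expect the minor subtlety to lie, though it is just the one-line adjunction computation above. Everything else is bookkeeping with \Cref{bsl} and \Cref{lemmafromoth}.
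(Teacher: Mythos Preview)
Your proposal is correct and follows essentially the same route as the paper: observe that $\mathcal{S}^{\perp}$ is both a localising tensor ideal and a colocalising hom-closed subcategory, then chain \Cref{bsl} and \Cref{lemmafromoth} to identify $(\textup{Ind}^{-1}_{G,E}(\mathcal{S}))^{\perp}$ with $\textup{Ind}^{-1}_{G,E}(\mathcal{S}^{\perp})$, which is a localising tensor ideal. The only difference is that you spell out the hom-closedness of $\mathcal{S}^{\perp}$ and the Bousfield-ness of $\textup{Ind}^{-1}_{G,E}(\mathcal{S})$ explicitly, whereas the paper leaves these implicit.
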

\begin{proof}
    The key point to observe here is that $\mathcal{S}^{\perp}$ is a localising tensor ideal and a colocalising hom-closed subcategory. We know that $\textup{Ind}^{-1}_{G,E}(\mathcal{S}^{\perp}) = \textup{Coind}^{-1}_{G,E}(\mathcal{S}^{\perp}) = (\textup{Ind}^{-1}_{G,E}(\mathcal{S}))^{\perp} $; the first equality is from \Cref{bsl} and the second is \Cref{lemmafromoth}. It follows that $(\textup{Ind}^{-1}_{G,E}(\mathcal{S}))^{\perp}$ is a localising tensor ideal and hence $\textup{Ind}^{-1}_{G,E}(\mathcal{S})$ is smashing. 
\end{proof}
\begin{proposition}\label{smagenco}
    Every smashing ideal is generated by compact objects of the form $M{\uparrow}_E^G$ where $E \leq G$ is a finite elementary abelian subgroup and $M$ is a finitely generated $kE$-module. 
\end{proposition}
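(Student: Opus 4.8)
The plan is to reduce everything to finite elementary abelian subgroups, where the telescope conjecture is known, and then reassemble using the local‑to‑global results already in place. So I would fix a smashing ideal $\mathcal{S}$ of $\tacstab(kG)$ and, for each finite elementary abelian subgroup $E \leq G$, set $\mathcal{S}_E := \textup{Ind}^{-1}_{G,E}(\mathcal{S})$. This is a localising tensor ideal of $\tacstab(kE)$, and it is smashing by \Cref{smpr}. Since $\tacstab(kE)$ is rigidly‑compactly generated — the unit $k$ is a compact generator by \Cref{comkin}, and its compact objects are the finitely generated $kE$-modules — our notion of smashing agrees with the usual one (this is the content of the remark following \Cref{smashingdefinition}, given that $\mathcal{S}_E^\perp$ is a localising tensor ideal), so the telescope conjecture for finite groups \cite[Theorem~11.12]{repstrat} applies and shows that $\mathcal{S}_E$ is generated by compact objects. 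I would then fix, for each $E$, a set $\mathcal{C}_E$ of finitely generated $kE$-modules with $\mathcal{S}_E = \locm(\mathcal{C}_E)$.

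The claim to verify is that $\mathcal{S} = \locm(\{\,M{\uparrow}_E^G : E \in \mathcal{E}(G),\ M \in \mathcal{C}_E\,\})$. For the inclusion $\supseteq$, each $M \in \mathcal{C}_E$ lies in $\mathcal{S}_E = \textup{Ind}^{-1}_{G,E}(\mathcal{S})$, so $M{\uparrow}_E^G \in \mathcal{S}$ by definition, and $\mathcal{S}$ is a localising tensor ideal. For $\subseteq$, take $X \in \mathcal{S}$; the projection formula gives $X{\downarrow}_E^G{\uparrow}_E^G \cong X \otimes k{\uparrow}_E^G \in \mathcal{S}$, hence $X{\downarrow}_E^G \in \textup{Ind}^{-1}_{G,E}(\mathcal{S}) = \locm(\mathcal{C}_E)$, and so $X{\downarrow}_E^G{\uparrow}_E^G \in \locm(\mathcal{C}_E{\uparrow}_E^G)$ by \Cref{isol}. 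As this holds for every finite elementary abelian $E \leq G$, \Cref{detectedelsub} yields $X \in \locm(\bigoplus_E X{\downarrow}_E^G{\uparrow}_E^G) \subseteq \locm(\{\,M{\uparrow}_E^G\,\})$, as needed. Finally I would note that each generator $M{\uparrow}_E^G$ is indeed compact: $M$ is compact in $\tacstab(kE)$ and induction, being left adjoint to restriction (which preserves coproducts), sends compact objects to compact objects. (One could alternatively invoke \Cref{hop} to see directly that these ideals are smashing, but that is not needed for this statement.)

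The only genuinely external input is the telescope conjecture for $\tacstab(kE)$ with $E$ finite elementary abelian, which is where the real content enters; the remaining work is bookkeeping combining \Cref{smpr}, \Cref{isol} and \Cref{detectedelsub}. The one point to take a little care over is confirming that $\mathcal{S}_E$ is smashing in the rigidly‑compactly generated sense so that \cite[Theorem~11.12]{repstrat} is applicable, but this is immediate from \Cref{smpr} together with the fact that for rigidly‑compactly generated categories the two notions of smashing coincide.
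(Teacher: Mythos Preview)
Your proof is correct and follows essentially the same route as the paper's: pull back $\mathcal{S}$ along induction to each finite elementary abelian $E$, invoke \Cref{smpr} and the telescope conjecture for finite groups to get compact generators $\mathcal{C}_E$, and then use \Cref{isol} and \Cref{detectedelsub} to show the induced modules $M{\uparrow}_E^G$ with $M\in\mathcal{C}_E$ generate $\mathcal{S}$. You supply slightly more detail than the paper (the projection formula to justify $X{\downarrow}_E^G\in\mathcal{S}_E$, and the remark on compatibility of smashing notions), but the argument is the same.
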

\begin{proof}
    Let $\mathcal{S}$ be smashing. Then \Cref{smpr} shows that $\textup{Ind}^{-1}_{G,E}(\mathcal{S})$ is smashing. The telescope conjecture for finite groups \cite[Theorem~11.12]{repstrat} shows that $\textup{Ind}^{-1}_{G,E}(\mathcal{S})$ is generated by a set of compact objects, say $\mathcal{C}_E$.
    \par 
    We claim that $\mathcal{S}$ is generated as a localising tensor ideal by all objects of the form $X{\uparrow}_E^G$ where $E \leq G$ is a finite elementary abelian subgroup and $X \in \mathcal{C}_E$; we let $\mathcal{J}$ be the localising ideal generated by all such $X{\uparrow}_E^G$. Clearly, $X{\uparrow}_E^G \in \mathcal{S}$ and so $\mathcal{J}\subseteq \mathcal{S}$.
    \par 
    On the other hand, suppose that $Y \in \mathcal{S}$. Then for any finite elementary abelian subgroup $E \leq G$ we have $Y{\downarrow}_E^G\in \locm(\mathcal{C}_E)$. We conclude from \Cref{isol} that $Y{\downarrow}_E^G{\uparrow}_E^G \in \locm(\bigcup\limits_{X \in \mathcal{C}_E}X{\uparrow}_E^G)$. This is true for each finite elementary abelian subgroup $E \leq G$ and so by \Cref{detectedelsub} we see that $Y \in \mathcal{J}$. 
\end{proof}
\begin{theorem}
    A localising tensor ideal is smashing if and only if it is generated by a collection of modules of the form $N{\uparrow}_E^G$ where $E \leq G$ is a finite elementary abelian subgroup and $N$ is a finitely generated $kE$-module.
\end{theorem}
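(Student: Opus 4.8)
The plan is to assemble the statement directly from the two main results already proved in this section, \Cref{hop} and \Cref{smagenco}.

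For the reverse implication, suppose $\mathcal{L}$ is a localising tensor ideal generated by modules of the form $N{\uparrow}_E^G$ with $E \leq G$ finite elementary abelian and $N$ a finitely generated $kE$-module. This is precisely the hypothesis of \Cref{hop}, so $\mathcal{L}$ is smashing and nothing further is needed.

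For the forward implication, suppose $\mathcal{S}$ is smashing. By \Cref{smagenco}, $\mathcal{S}$ is generated by objects of the form $M{\uparrow}_E^G$ where $E \leq G$ is finite elementary abelian and $M$ is a finitely generated $kE$-module (in fact these generators are compact, since induction from a finite subgroup of a finitely generated module is compact, but this extra information is not needed here). This exhibits $\mathcal{S}$ as a localising tensor ideal generated by modules of exactly the claimed form.

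There is essentially no obstacle: the theorem is a clean repackaging of \Cref{hop} and \Cref{smagenco}, and the only point to observe is that the generators produced by \Cref{smagenco} match on the nose the hypothesis required to apply \Cref{hop} in the converse, so the two conditions are genuinely equivalent and not merely implications in sequence. I would therefore present the proof as a two-line argument citing these two results.
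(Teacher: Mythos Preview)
Your proposal is correct and takes essentially the same approach as the paper: the paper's proof is the single line ``This follows from \Cref{hop} and \Cref{smagenco},'' which is exactly the two-line argument you describe.
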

\begin{proof}
    This follows from \Cref{hop} and \Cref{smagenco}. 
\end{proof}
Note that we are not claiming that a localising tensor ideal is smashing if and only if it is generated by compact objects. Indeed, it is not clear if a localising tensor ideal generated by an arbitrary compact objects will be smashing in general.
\subsection{The telescope conjecture}
We now consider the appropriate version of the telescope conjecture in our situation. The compact objects will have no reason to be closed under the tensor product in the stable category. Hence, it does not even make sense to consider thick tensor ideals of compact objects. Instead, we must focus on the dualisable objects, which do form a tensor-triangulated category. We begin by showing that dualisable objects generate smashing subcategories. For this, we need the following well known observation. 
\begin{lemma}\label{du}
    Let $\mathcal{T}$ be a thick ideal of dualisable objects. Then $X \in \mathcal{T}$ if and only if $X^* \in \mathcal{T}$.
\end{lemma}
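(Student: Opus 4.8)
The plan is to use the standard "zigzag" (triangle) identity satisfied by dualisable objects, namely that any dualisable $X$ is a retract of $X \otimes X^* \otimes X$. Concretely, the composite
\[
X \cong X \otimes \mathbbm{1} \xrightarrow{\ 1_X \otimes \mathrm{coev}\ } X \otimes X^* \otimes X \xrightarrow{\ \mathrm{ev} \otimes 1_X\ } \mathbbm{1} \otimes X \cong X
\]
is the identity on $X$, where $\mathrm{coev}: \mathbbm{1} \to X \otimes X^*$ and $\mathrm{ev}: X^* \otimes X \to \mathbbm{1}$ are the coevaluation and evaluation maps; this exhibits $X$ as a direct summand of $X \otimes (X^* \otimes X)$.

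With this in hand, the forward implication is quick. Suppose $X \in \mathcal{T}$. Since $\mathcal{T}$ is a tensor ideal, $X \otimes X^* \in \mathcal{T}$, and hence $X^* \otimes (X \otimes X^*) \in \mathcal{T}$. Recalling that dualisable objects satisfy $X^{**} \cong X$, the zigzag identity applied to $X^*$ exhibits $X^*$ as a retract of $X^* \otimes (X^{**} \otimes X^*) \cong X^* \otimes (X \otimes X^*)$. Since $\mathcal{T}$ is thick, hence closed under summands, we conclude $X^* \in \mathcal{T}$.

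For the converse, one can either repeat the same argument with the roles of $X$ and $X^*$ interchanged — if $X^* \in \mathcal{T}$ then $X^* \otimes X \in \mathcal{T}$, so $X \otimes (X^* \otimes X) \in \mathcal{T}$, and $X$ is a retract of this, hence $X \in \mathcal{T}$ — or simply observe that the two directions are interchanged by the natural isomorphism $X^{**}\cong X$, so proving one direction suffices. There is no real obstacle here; the only point requiring a moment's care is making sure the retract is taken inside a tensor ideal (so that the "extra" tensor factors are harmless) and that $X^{**}\cong X$ is available for dualisable objects, which is standard.
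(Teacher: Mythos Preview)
Your proof is correct and follows essentially the same approach as the paper: both arguments rest on the fact that a dualisable $X$ is a summand of $X \otimes X^* \otimes X$ (the zigzag identity) together with $X^{**} \cong X$, and then use that $\mathcal{T}$ is a thick tensor ideal. The paper simply cites these two facts from the literature and leaves the rest implicit, whereas you have spelled out the details.
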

\begin{proof}
    This follows immediately from the fact that if $X$ is dualisable then $X$ is a summand of $X \otimes X^*\otimes X$ \cite[Lemma~A.2.6]{HPSasht}, and the natural map $X \to X^{**}$ is an isomorphism \cite[Theorem~A.2.5]{HPSasht}.
\end{proof}
\begin{proposition}\label{tel2}
    Suppose $\mathcal{T}$ is a thick tensor ideal of $\tacstab(kG)^d$. Then $\locm(\mathcal{T})$ is a smashing tensor ideal. 
\end{proposition}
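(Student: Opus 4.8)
The plan is to reduce the statement to the characterisation of smashing ideals already established, namely that a localising tensor ideal is smashing precisely when it is generated by modules of the form $N{\uparrow}_E^G$ with $E \leq G$ finite elementary abelian and $N$ a finitely generated $kE$-module (this is \Cref{hop} together with \Cref{smagenco}). So the key is to show that for a thick tensor ideal $\mathcal{T}$ of $\tacstab(kG)^d$, the localising tensor ideal $\locm(\mathcal{T})$ admits such a generating set.

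First I would analyse what $\mathcal{T}$ consists of. Each $X \in \mathcal{T}$ is dualisable, and by \Cref{du} we have $X^* \in \mathcal{T}$ as well. The crucial input should be a description of the dualisable objects: a dualisable object $X$, when restricted to a finite elementary abelian subgroup $E$, gives a dualisable — hence compact, hence finitely generated — object $X{\downarrow}_E^G$ of $\tacstab(kE)$. Then by \Cref{detectedelsub} we have $\locm(X) = \locm(\bigoplus_{E \in \mathcal{E}(G)} X{\downarrow}_E^G{\uparrow}_E^G)$, and each summand $X{\downarrow}_E^G{\uparrow}_E^G$ is the induction of a finitely generated $kE$-module. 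Taking the join over all $X \in \mathcal{T}$, this exhibits $\locm(\mathcal{T})$ as generated by modules of exactly the required form, so \Cref{hop} applies directly and gives that $\locm(\mathcal{T})$ is smashing. It is automatically a tensor ideal since it is of the form $\locm(-)$.

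The one point that needs care — and which I expect to be the main obstacle — is justifying that $X{\downarrow}_E^G$ is a finitely generated (equivalently compact) $kE$-module whenever $X \in \tacstab(kG)^d$. This should follow because restriction is a tensor-triangulated functor, so it preserves dualisable objects, and over the finite group $E$ the dualisable objects coincide with the compact objects, which are (up to equivalence in the stable category) the finitely generated modules. One must make sure the construction of the stable category from \cite{kendall2024stablemodulecategorymodel} really does identify compacts over a finite group with finitely generated modules in the expected way, and that restriction of a dualisable object lands among compacts; given the background recalled in the excerpt (in particular \Cref{comkin} and the adjunctions), this is routine but should be spelled out. Everything else is formal manipulation with \Cref{detectedelsub} and \Cref{hop}.
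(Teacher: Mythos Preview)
Your argument is correct, but it takes a different route from the paper's. The paper proves \Cref{tel2} directly and abstractly: it shows that
\[
\locm(\mathcal{T})^{\perp} = \bigcap_{T \in \mathcal{T}}\ker(T\otimes -)
\]
via the chain $X \in \locm(\mathcal{T})^{\perp} \iff \tachom_{kG}(T\otimes Y,X)=0 \iff \tachom_{kG}(Y,T^*\otimes X)=0 \iff T^*\otimes X\cong 0 \iff T\otimes X\cong 0$, using only \Cref{lop}, dualisability of $T$, and \Cref{du}. Since the right-hand side is visibly tensor ideal, $\locm(\mathcal{T})$ is smashing by \Cref{smashingdefinition}. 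This argument is entirely general: it works in any tensor-triangulated category (with the Bousfield property for localising tensor ideals already in hand), and uses nothing about the specific structure of $\tacstab(kG)$.

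Your approach instead pushes the problem down to finite subgroups: restrict each dualisable $X$ to the elementary abelian $E$'s, observe that $X{\downarrow}_E^G$ is dualisable and hence (stably) finitely generated over $kE$, invoke \Cref{detectedelsub} to write $\locm(X)$ in terms of these inductions, and then apply \Cref{hop}. This is valid, and the step you flag as the main obstacle --- that restriction preserves dualisability --- is indeed routine (restriction is symmetric monoidal, hence preserves duals; compare also \cite[Proposition~5.8]{kendall2024costratificationstablecategoriesinfinite}). What your approach buys is a concrete description of the generators of $\locm(\mathcal{T})$ as induced modules; what it costs is dependence on the representation-theoretic machinery (\Cref{detectedelsub}, \Cref{hop}) rather than a two-line formal argument. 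The paper's proof is the cleaner one and makes clear that the result has nothing to do with group representations specifically.
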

\begin{proof}
\sloppy We claim that $\locm(\mathcal{T})^{\perp} = \bigcap\limits_{T \in \mathcal{T}}\textup{ker}(T\otimes -)$. From this it is clear that $\locm(\mathcal{T})^{\perp}$ is tensor ideal and so $\locm(\mathcal{T})$ must be smashing. 
\par 
We now have the following chain of implications. 
\begin{align}
    X \in \locm(\mathcal{T})^{\perp} &\iff \tachom_{kG}(T \otimes Y,X) = 0 \textup{ for all }kG\textup{-modules } Y \textup{ and }T\in\mathcal{T}\\
    &\iff \tachom_{kG}(Y,T^*\otimes X) = 0\textup{ for all }kG\textup{-modules } Y \textup{ and }T\in\mathcal{T}\\
    &\iff T^*\otimes X \cong 0 \textup{ for all }T \in \mathcal{T}\\
    &\iff T\otimes X \cong 0\textup{ for all }T \in \mathcal{T}
\end{align}
The first equivalence follows from \Cref{lop}. The second equivalence is adjunction, using that $T$ is dualisable. The third step follows because we may choose $Y = T^*\otimes X$. The final step follows by \Cref{du} and the fact that for a dualisable $T$ we have $T \cong T^{**}$. This proves the claim, and hence the statement. 
\end{proof}
In particular, this means the map $\textup{Thickid}(G^d) \to \textup{Smashid}(G)$ defined by $\mathcal{T} \mapsto \locm(\mathcal{T})$ is well defined. 
\begin{definition}
    We say that the telescope conjecture holds for $G$ if the map $\textup{Thickid}(G^d) \to \textup{Smashid}(G)$ is a bijection.
\end{definition} 
We note that this is essentially what Wolcott \cite{WolcottTel} refers to as the strongly dualisable generalised smashing conjecture, although Wolcott assumes that the tensor unit generates the category as a localising subcategory, which is not true in our situation. However, it seems to be the correct generalisation of the telescope conjecture in this context, since the compact objects will not be closed under the tensor product. 
\par
As previously mentioned, in the rigidly-compactly generated case the above map is always injective. In \Cref{notele} and \Cref{notelex} we give examples to show that this map is not injective in general, and hence this provides examples where the telescope conjecture does not hold. 
\section{Infinite free products of finite groups}\label{freeprodsec}
In this section we consider the case where $G = \bigast\limits_{n \in \mathbb{N}} H_n$, and each $H_n$ is a finite group. As before, we let $k$ be a field of characteristic $p > 0$, and to avoid triviality we assume that $p$ divides the order of each $H_n$. 
\par 
We begin with identifying the compact and dualisable objects. 
\begin{lemma}\label{riginpro}
    A module $M$ is dualisable if and only if $M{\downarrow}_{H_n}^G$ is dualisable for each $n$. 
\end{lemma}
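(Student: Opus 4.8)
The plan is to treat the two implications separately: the forward one is formal, while the converse rests on decomposing an arbitrary $kG$-module into pieces induced from the free factors.

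The forward implication is immediate. Restriction $(-){\downarrow}_{H_n}^{G}\colon\tacstab(kG)\to\tacstab(kH_n)$ is a tensor-triangulated functor, in particular strong symmetric monoidal — it carries the tensor product over $k$ with diagonal action, and the tensor unit, to themselves — and any strong monoidal functor preserves dualisable objects, as the evaluation and coevaluation maps transport along the monoidal structure. Hence if $M$ is dualisable, so is each $M{\downarrow}_{H_n}^{G}$.

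For the converse I would first establish that $k\simeq\bigoplus_{n}k{\uparrow}_{H_n}^{G}$ in $\tacstab(kG)$. Letting $G$ act on its Bass--Serre tree $T$ — the tree of the ``star'' graph of groups, with a central vertex carrying the trivial group and, for each $n$, an edge carrying the trivial group joining it to a vertex carrying $H_n$ — contractibility of $T$ yields a short exact sequence of $kG$-modules
\[0\longrightarrow\bigoplus_{n}kG\longrightarrow kG\oplus\bigoplus_{n}k{\uparrow}_{H_n}^{G}\longrightarrow k\longrightarrow 0.\]
Since free modules are zero in $\tacstab(kG)$, the associated triangle degenerates to the desired isomorphism. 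Writing $e_n:=k{\uparrow}_{H_n}^{G}$ and tensoring with an arbitrary module $M$, the projection formula $M\otimes N{\uparrow}_{H_n}^{G}\cong(M{\downarrow}_{H_n}^{G}\otimes N){\uparrow}_{H_n}^{G}$ gives $M\simeq\bigoplus_{n}M_n$ with $M_n:=(M{\downarrow}_{H_n}^{G}){\uparrow}_{H_n}^{G}=e_n\otimes M$. Using the Mackey formula and the malnormality of the free factors (distinct factors and their conjugates intersect trivially), I would then check that the $e_n$ form a system of orthogonal idempotents: $e_n\otimes e_n\simeq e_n$, while $e_n\otimes e_m\simeq0$ and $\tachom_{kG}(e_n,e_m)\simeq0$ for $n\neq m$ — the last after restriction to finite elementary abelian subgroups, each of which lies in a conjugate of a single $H_j$ by Kurosh's theorem. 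It then remains to show $\bigoplus_{n}M_n$ is dualisable. First, each $M_n$ is dualisable with dual $(M{\downarrow}_{H_n}^{G})^{*}{\uparrow}_{H_n}^{G}$: indeed $M{\downarrow}_{H_n}^{G}$ is dualisable, hence compact, in the rigidly-compactly generated category $\tacstab(kH_n)$, and the identity $\tachom_{kG}(N{\uparrow}_{H_n}^{G},Y)\simeq(N^{*}\otimes Y{\downarrow}_{H_n}^{G}){\uparrow}_{H_n}^{G}$ for dualisable $N$ over $kH_n$ — coming from the restriction--coinduction adjunction together with the agreement of induction and coinduction on the $e_n$-local modules involved, cf.\ \Cref{bsl} — makes the evaluation map invertible. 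Second, with $\bigoplus_{n}M_n^{*}$ as candidate dual of $M$, the orthogonality of the supporting idempotents (so that $e_m\otimes M_n^{*}\simeq0$ for $m\neq n$) forces the canonical map $\bigoplus_{n}M_n^{*}\to\prod_{n}M_n^{*}=\tachom_{kG}(M,\mathbbm{1})$ to be an isomorphism and, crucially, lets $-\otimes Y$ be pushed past the product; checking component by component, using dualisability of each $M_n$, that $\tachom_{kG}(M,\mathbbm{1})\otimes Y\to\tachom_{kG}(M,Y)$ is invertible for every $Y$ then finishes the argument.

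The step I expect to be the main obstacle is precisely this last assembly: an infinite coproduct of dualisable objects is \emph{not} dualisable in general — the tensor product does not commute with infinite products — so the orthogonality of the $e_n$ has to be used in an essential way, alongside the somewhat delicate identification of internal homs out of induced modules. A clean way to package all of this is to prove first that
\[\Phi\colon\ \prod_{n}\tacstab(kH_n)\ \longrightarrow\ \tacstab(kG),\qquad (N_n)_n\longmapsto\bigoplus_{n}N_n{\uparrow}_{H_n}^{G},\]
is an equivalence of tensor-triangulated categories — essential surjectivity being the decomposition above, and full faithfulness following from the Mackey vanishing together with the fact that a morphism out of an induced module into a coproduct of induced modules is detected one summand at a time — after which the lemma reduces to the observation that an object of a product of tensor-triangulated categories is dualisable if and only if each of its components is.
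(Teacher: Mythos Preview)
Your argument is essentially correct but takes a very different and far more laborious route than the paper. The paper's proof is a two-line appeal to a general result from \cite[Proposition~5.8]{kendall2024costratificationstablecategoriesinfinite}, which says that in $\tacstab(kG)$ a module is dualisable if and only if its restriction to every finite subgroup is dualisable; the lemma then follows because every finite subgroup of a free product is conjugate into one of the free factors (Kurosh). No Bass--Serre decomposition, no orthogonal idempotents, no assembly argument is needed.

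Your hands-on approach --- decompose $M\simeq\bigoplus_n(M{\downarrow}_{H_n}^{G}){\uparrow}_{H_n}^{G}$ and then argue dualisability componentwise using orthogonality --- does work, and the clean packaging you propose at the end (the equivalence $\tacstab(kG)\simeq\prod_n\tacstab(kH_n)$) is exactly G\'omez's \cite[Theorem~3.3]{gomez2024picardgroupstablemodule}, which the paper cites later. Two cautions, though. First, your appeal to \Cref{bsl} is misplaced: that lemma compares $\textup{Ind}^{-1}$ and $\textup{Coind}^{-1}$ on subcategories satisfying strong closure hypotheses; it does not say that induction and coinduction agree on objects. What you actually need --- that $(N^{*}\otimes Y{\downarrow}_{H_n}^{G}){\Uparrow}_{H_n}^{G}\simeq (N^{*}{\uparrow}_{H_n}^{G})\otimes Y$ --- is most cleanly obtained \emph{after} you have the product decomposition, since then the inclusion of the $n$-th factor is simultaneously left and right adjoint to restriction. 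Trying to verify this identity beforehand is the genuine work in your approach. Second, note that the paper itself uses \Cref{riginpro} to prove that $k{\uparrow}_{H_n}^{G}$ is dualisable (\Cref{compinri}), so you must be careful not to smuggle that in. Your outline avoids this, but the point deserves flagging. In short: your route is valid and illuminating (it essentially reproves G\'omez's splitting), while the paper's route is a one-line reduction to a result already on the shelf.
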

\begin{proof}
    This follows from \cite[Proposition~5.8]{kendall2024costratificationstablecategoriesinfinite}, which shows that a module $M$ is dualisable if and only if $M{\downarrow}_F^G$ is dualisable for every finite subgroup $F \leq G$. This shows the statement since every finite subgroup is conjugate to a subgroup of one of the $H_n$.
\end{proof}
\begin{lemma}\label{compinri}
    Every compact object is dualisable. 
\end{lemma}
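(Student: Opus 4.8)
The plan is to reduce to the compact generators $k{\uparrow}_E^G$ and then restrict to the finite factors $H_n$. Since $\tacstab(kG)^c$ is the thick subcategory generated by any set of compact generators, \Cref{comkin} gives $\tacstab(kG)^c = \thick(\{k{\uparrow}_E^G \mid E \in \mathcal{E}(G)\})$; as the dualisable objects of a closed tensor-triangulated category form a thick subcategory, it therefore suffices to show that each $k{\uparrow}_E^G$ is dualisable. By \Cref{riginpro} this is in turn equivalent to showing that $(k{\uparrow}_E^G){\downarrow}_{H_n}^G$ is dualisable in $\tacstab(kH_n)$ for every $n$.

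Fix a finite elementary abelian $E \leq G$. By the Kurosh subgroup theorem every finite subgroup of $G = \bigast_{n} H_n$ is conjugate into a factor, and since $k{\uparrow}_E^G \cong k{\uparrow}_{{}^{g}E}^G$ for any $g$ we may assume $E \leq H_m$ for some $m$. The key point I would establish first is the structural fact that the factors of a free product form a malnormal family: for $x \in G$, if $H_n \cap {}^{x}H_m \neq 1$ then $n = m$ and $x \in H_m$. This follows from the action of $G$ on its Bass--Serre tree, whose edge stabilisers are trivial and whose vertex stabilisers are exactly the conjugates of the factors, with vertices of different types lying in distinct orbits: a nontrivial element of $H_n \cap {}^{x}H_m$ fixes the vertices stabilised by $H_n$ and by ${}^{x}H_m$, hence the geodesic between them, and since edges have trivial stabiliser these vertices coincide, forcing $n = m$ and $x \in H_m$. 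Decomposing the permutation module $k{\uparrow}_E^G \cong k[G/E]$ into $H_n$-orbits — equivalently, via the Mackey formula — then gives $(k{\uparrow}_E^G){\downarrow}_{H_n}^G \cong \bigoplus_{x} k{\uparrow}_{H_n \cap {}^{x}E}^{H_n}$, the sum running over representatives $x$ of the double cosets $H_n\backslash G/E$, and by the structural fact $H_n \cap {}^{x}E = 1$ for all such $x$ unless $n = m$ and $x \in H_m$.

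It follows that for $n \neq m$ every summand is a free $kH_n$-module, and since an arbitrary direct sum of free modules is free — hence zero in $\tacstab(kH_n)$ — we get $(k{\uparrow}_E^G){\downarrow}_{H_n}^G \cong 0$, which is dualisable. For $n = m$ the only non-free summand comes from the double coset $H_m$, so $(k{\uparrow}_E^G){\downarrow}_{H_m}^G \cong k{\uparrow}_E^{H_m}$ in $\tacstab(kH_m)$, a finitely generated module over the finite group $H_m$ and hence dualisable. This shows $(k{\uparrow}_E^G){\downarrow}_{H_n}^G$ is dualisable for all $n$, so $k{\uparrow}_E^G$ is dualisable, and the lemma follows. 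I expect the only real obstacle to be the group theory — pinning down the malnormality statement precisely so that the Mackey decomposition collapses — after which everything is formal; in particular the a priori worry about infinitely many double cosets disappears, since all but (at most) one of the resulting summands are projective.
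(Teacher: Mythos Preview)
Your proof is correct and follows essentially the same strategy as the paper: reduce to compact generators, check dualisability via \Cref{riginpro} by restricting to each factor $H_n$, and compute these restrictions using Mackey. The only difference is in the choice of generators. The paper works directly with the modules $k{\uparrow}_{H_n}^G$ induced from the full factors; for these the Mackey computation is immediate, giving $k{\uparrow}_{H_n}^G{\downarrow}_{H_n}^G \cong k$ and $k{\uparrow}_{H_n}^G{\downarrow}_{H_m}^G \cong 0$ for $m \neq n$, so the malnormality discussion and the Kurosh step are absorbed into a single line. You instead take the elementary abelian generators literally supplied by \Cref{comkin}, which forces you to conjugate $E$ into some $H_m$ and then unpack the malnormality of the factor family to kill the remaining double-coset summands. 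Your route is slightly longer but is more faithful to the actual statement of \Cref{comkin}; the paper's shortcut implicitly uses that the $k{\uparrow}_{H_n}^G$ also form a generating set, which follows by the same adjunction argument but is not quite what \Cref{comkin} says verbatim.
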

\begin{proof}
    It suffices to show that each compact generator is dualisable. The category is generated by modules of the form $k{\uparrow}_{H_n}^G$ as shown in \Cref{comkin}.  By the Mackey formula, we have that $k{\uparrow}_{H_n}^G{\downarrow}_{H_n}^G \cong k$ and $k{\uparrow}_{H_n}^G{\downarrow}_{H_m}^G \cong 0$ if $m \neq n$. Therefore, from \Cref{riginpro} we see that $k{\uparrow}_{H_n}^G$ is dualisable.
\end{proof}
\begin{proposition}\label{produccomp}
    An object $X$ is compact if and only if $X{\downarrow}_{H_n}^G \neq 0$ for only finitely many $n \in \mathbb{N}$ and $X$ is dualisable.
\end{proposition}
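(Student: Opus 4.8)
The plan is to prove both implications of the biconditional, using the structural results about induction, restriction, and the Mackey formula for free products already established.

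\textbf{Forward direction.} Suppose $X$ is compact. By \Cref{comkin}, the stable category $\tacstab(kG)$ is compactly generated by the modules $k{\uparrow}_{H_n}^G$, and by \Cref{compinri} every compact object is dualisable, so in particular $X$ is dualisable. It remains to show $X{\downarrow}_{H_n}^G \neq 0$ for only finitely many $n$. Since $X$ is compact, it lies in the thick subcategory generated by the compact generators $k{\uparrow}_{H_m}^G$; more precisely, by a standard argument (e.g. the fact that in a compactly generated category every compact object is built from finitely many of a generating set using the triangulated structure and retracts), $X$ is a retract of an object in the thick subcategory generated by finitely many of the $k{\uparrow}_{H_m}^G$, say for $m$ in a finite set $S$. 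By the Mackey formula, $k{\uparrow}_{H_m}^G{\downarrow}_{H_n}^G \cong 0$ whenever $m \neq n$. Since restriction is exact and preserves retracts, it follows that $X{\downarrow}_{H_n}^G \cong 0$ for all $n \notin S$, which is the desired finiteness.

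\textbf{Reverse direction.} Suppose $X$ is dualisable and $X{\downarrow}_{H_n}^G \neq 0$ for only $n$ in a finite set $S$. The idea is to reconstruct $X$ from the pieces $X{\downarrow}_{H_n}^G{\uparrow}_{H_n}^G$ for $n \in S$ and deduce compactness. First, by \Cref{riginpro}, dualisability of $X$ means $X{\downarrow}_{H_n}^G$ is dualisable in $\tacstab(kH_n)$ for each $n$; since $H_n$ is finite, dualisable objects coincide with compact objects, so each $X{\downarrow}_{H_n}^G$ is compact, hence $X{\downarrow}_{H_n}^G{\uparrow}_{H_n}^G$ is compact in $\tacstab(kG)$ (induction from a finite subgroup preserves compactness). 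Therefore $\bigoplus_{n\in S} X{\downarrow}_{H_n}^G{\uparrow}_{H_n}^G$ is a finite direct sum of compact objects, hence compact. Now I would argue that $X$ itself is built from these: by \Cref{detectedelsub} (or rather its evident analogue for restriction to the $H_n$, which holds here since every finite subgroup is conjugate into some $H_n$, cf. the reasoning in \Cref{riginpro} and \Cref{comkin}), $\locm(X)$ is determined by the modules $X{\downarrow}_{H_n}^G{\uparrow}_{H_n}^G$, and because $X{\downarrow}_{H_n}^G \cong 0$ for $n \notin S$, only the finitely many terms with $n \in S$ contribute. The cleanest route is to exhibit an explicit finite filtration: using the fact that the $k{\uparrow}_{H_n}^G$ for distinct $n$ are "orthogonal" under restriction, one shows that $X$ is a retract of (a suitable finite extension of) $\bigoplus_{n\in S} X{\downarrow}_{H_n}^G{\uparrow}_{H_n}^G$. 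Concretely, the counit map $\bigoplus_{n\in S} X{\downarrow}_{H_n}^G{\uparrow}_{H_n}^G \to X$ (sum of restriction–induction counits) should have cone whose restriction to every $H_m$ vanishes — for $m \in S$ because the counit is split after restricting to $H_m$ by the Mackey formula, and for $m \notin S$ trivially — hence the cone is zero by \Cref{comkin}, so the counit is an isomorphism and $X$ is compact.

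\textbf{Main obstacle.} The delicate point is the reverse direction: verifying that the counit $\bigoplus_{n\in S} X{\downarrow}_{H_n}^G{\uparrow}_{H_n}^G \to X$ is an isomorphism, or more precisely that its cone has zero restriction to each $H_m$. For $m \in S$ one must check, via the Mackey decomposition of $X{\downarrow}_{H_n}^G{\uparrow}_{H_n}^G{\downarrow}_{H_m}^G$, that restricting the counit to $H_m$ recovers (a split surjection onto) $X{\downarrow}_{H_m}^G$; the terms with $n \neq m$ restrict to $0$ by the Mackey formula for the free product, so only the $n = m$ summand survives and there the counit restricts to the identity-like map $X{\downarrow}_{H_m}^G{\uparrow}_{H_m}^G{\downarrow}_{H_m}^G \to X{\downarrow}_{H_m}^G$ which is a split epimorphism. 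Once the restrictions of the cone to all $H_m$ vanish, \Cref{comkin} forces the cone to be zero in $\tacstab(kG)$. This bookkeeping with the Mackey formula for infinite free products — ensuring that the relevant double cosets behave as claimed and that no infinite contributions sneak in — is where the care is needed, though conceptually it is routine.
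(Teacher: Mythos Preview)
Your forward direction matches the paper's. For the reverse direction you take a genuinely different route. The paper uses the short exact sequence $0 \to P \to \bigoplus_n k{\uparrow}_{H_n}^G \to k \to 0$ arising from the action of $G$ on its Bass--Serre tree (the edge stabilisers are trivial, so $P$ is projective); tensoring with $X$ and applying the projection formula gives the stable isomorphism $X \cong \bigoplus_n X{\downarrow}_{H_n}^G{\uparrow}_{H_n}^G$ in one stroke, after which the argument concludes exactly as yours does. Your approach via the counit and detection on restrictions is also valid, but note a small imprecision: for the cone to vanish you need the restricted counit $X{\downarrow}_{H_m}^G{\uparrow}_{H_m}^G{\downarrow}_{H_m}^G \to X{\downarrow}_{H_m}^G$ to be an \emph{isomorphism} in the stable category, not merely a split epimorphism. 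This does hold, because in a free product $H_m \cap {}^gH_m = 1$ for $g \notin H_m$, so every nonidentity Mackey summand is induced from the trivial group and hence is projective, i.e.\ zero stably. The paper's tree-based argument sidesteps exactly the Mackey bookkeeping you flag as the obstacle, at the price of invoking the geometric input; your argument stays within the adjunction formalism and would transport to any setting where the Mackey decomposition degenerates in the same way.
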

\begin{proof}
    First suppose that $X$ is compact. In view of \Cref{compinri} we only need to show that $X{\downarrow}_{H_n}^G \neq 0$ for only finitely many $n \in \mathbb{N}$. It follows from \cite[Proposition~1.1.2]{KPLat} that there exist a finite set of compact generators $\mathcal{C}$ such that $X \in \textup{Thick}(\mathcal{C})$. This means that there exists a finite subset $\mathcal{N}\subseteq \mathbb{N}$ such that $X \in \textup{Thick}(\{k{\uparrow}_{H_n}^G\ |\ n \in \mathcal{N}\})$. 
    \par 
    Let $\mathcal{T} = \textup{Thick}_{\otimes}(\{k{\uparrow}_{H_n}^G\ |\ n \in \mathcal{N}\})$ and take $Y \in \mathcal{T}$. We claim that for any $m \not\in\mathcal{N}$ we have $Y{\downarrow}_{H_m}^G \cong 0$. The Mackey formula shows that this is true for all modules of the form $k{\uparrow}_{H_n}^G$ for $n \in\mathbb{N}$. It is easy to see that the claim is then true for any module in the thick tensor ideal generated by all such modules, and so it is true for any $Y \in \mathcal{T}$.
    \par 
    In particular, we have $X{\downarrow}_{H_m}^G \cong 0$ for every $m \not\in\mathcal{N}$, which shows that $X{\downarrow}_{H_n}^G \neq 0$ for only finitely many $n \in \mathbb{N}$. 
    \par 
    On the other hand, suppose that $X$ is dualisable and $X{\downarrow}_{H_n}^G \neq 0$ for only finitely many $n \in \mathbb{N}$. From the action of $G$ on a tree, we have a short exact sequence $0 \to P \to \bigoplus k{\uparrow}_{H_n}^G \to k \to 0$, with $P$ projective, because all the edge stabilisers are trivial. Therefore, we have a stable isomorphism $X \cong \bigoplus\limits_{n\in\mathbb{N}}X{\downarrow}_{H_n}^G{\uparrow}_{H_n}^G$. By assumption, we see that this is stably isomorphic to a finite direct sum. Furthermore, $X$ is dualisable and so from \Cref{riginpro} we know that $X{\downarrow}_{H_n}^G$ is dualisable, and therefore compact, from which it follows that $X{\downarrow}_{H_n}^G{\uparrow}_{H_n}^G$ is compact. Therefore, $X$ is stably isomorphic to a finite direct sum of compact objects and so is itself compact. 
\end{proof}
In particular, we have the following. 
\begin{corollary}
    There does not exist a single compact generator for $\tacstab(kG)$. Furthermore, the tensor unit is not compact.
\end{corollary}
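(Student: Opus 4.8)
The plan is to read both assertions off the description of the compact objects in \Cref{produccomp}: an object $X$ is compact precisely when $X$ is dualisable and $X{\downarrow}_{H_n}^G \cong 0$ for all but finitely many $n \in \mathbb{N}$.

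For the tensor unit, recall that $\mathbbm{1} = k$ is always dualisable, while $\mathbbm{1}{\downarrow}_{H_n}^G \cong k$ is nonzero in $\tacstab(kH_n)$ for \emph{every} $n$, since $k$ is stably nonzero over a finite group exactly when $p$ divides the group order and by hypothesis $p \mid |H_n|$. Thus $\mathbbm{1}{\downarrow}_{H_n}^G \neq 0$ for infinitely many $n$, so $\mathbbm{1}$ fails the second condition of \Cref{produccomp} and is not compact. For the non-existence of a single compact generator I argue by contradiction. If $X$ were a compact generator, then by \cite[Proposition~1.1.2]{KPLat} (used in the proof of \Cref{produccomp}) every compact object lies in $\thick(X)$, so the subcategory of compact objects equals $\thick(X)$. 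By \Cref{produccomp} there is a finite set $\mathcal{N} \subseteq \mathbb{N}$ with $X{\downarrow}_{H_m}^G \cong 0$ for all $m \notin \mathcal{N}$; fix such an $m$. Since $(-){\downarrow}_{H_m}^G \colon \tacstab(kG) \to \tacstab(kH_m)$ is an exact (triangulated) functor, it carries $\thick(X)$ into $\thick(X{\downarrow}_{H_m}^G) = \thick(0) = 0$; in particular $k{\uparrow}_{H_m}^G{\downarrow}_{H_m}^G \cong 0$, as $k{\uparrow}_{H_m}^G$ is compact. But the Mackey formula gives $k{\uparrow}_{H_m}^G{\downarrow}_{H_m}^G \cong k$ (as recorded in the proof of \Cref{compinri}), which is nonzero since $p \mid |H_m|$. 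This contradiction proves the claim, and the corollary follows.

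This argument is essentially routine once \Cref{produccomp} is in hand; there is no substantial obstacle, only two bookkeeping points. The first is the observation that $k$ is stably nonzero over $H_n$ precisely because $p \mid |H_n|$, which is what makes the test modules $k{\uparrow}_{H_m}^G$ genuinely nonzero. The second is to be precise about the meaning of "compact generator": one should either use that a single compact generator generates the compacts as a thick subcategory (the route above), or, equivalently, that the graded groups $\bigoplus_i \tachom_{kG}(X, \Omega^{-i}(-))$ detect the zero object — in which case one instead computes, using the finite decomposition $X \cong \bigoplus_{n \in \mathcal{N}} X{\downarrow}_{H_n}^G{\uparrow}_{H_n}^G$ from the proof of \Cref{produccomp} together with the restriction--induction adjunction, that $\tachom_{kG}(X{\downarrow}_{H_n}^G{\uparrow}_{H_n}^G,\, k{\uparrow}_{H_m}^G) \cong \tachom_{kH_n}(X{\downarrow}_{H_n}^G,\, k{\uparrow}_{H_m}^G{\downarrow}_{H_n}^G) = 0$ for every $n \in \mathcal{N}$ (each $n \neq m$) and for every syzygy, again contradicting $k{\uparrow}_{H_m}^G \neq 0$.
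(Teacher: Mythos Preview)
Your proof is correct and follows the same approach as the paper: both arguments read the claims directly off \Cref{produccomp}. The paper's proof is terser---it simply asserts that a compact generator would have to restrict to a nonzero object for infinitely many $n$, contradicting \Cref{produccomp}---while you spell out why that implication holds (via the thick-subcategory or Hom-detection characterisation of a generator together with the Mackey computation), but the substance is the same.
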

\begin{proof}
    A compact generator would have to restrict to a nonzero object for infinitely many $n \in \mathbb{N}$, contradicting \Cref{produccomp}. Similarly, the tensor unit is simply $k$ which we can see is not compact. 
\end{proof}
In this special case, the tensor product is well defined on the subcategory of compact objects, as well as on the subcategory of dualisable objects.
\begin{proposition}\label{ttcompcatex}
    The subcategory of dualisable objects forms a tensor-triangulated category, while the subcategory of compact objects forms a non-unital tensor-triangulated category. 
\end{proposition}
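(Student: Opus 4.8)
The triangulated structure and the closed symmetric monoidal structure of $\tacstab(kG)$ restrict to any full subcategory which is thick and closed under the tensor product, so the only points needing verification are closure statements. Concretely, for the dualisable objects we must check that $\tacstab(kG)^d$ is closed under $-\otimes-$, is thick, and contains the unit $\mathbbm{1} = k$; for the compact objects we must check that $\tacstab(kG)^c$ is closed under $-\otimes-$ and is thick, the failure of unitality being exactly the statement that $k$ is not compact, which was already recorded in the Corollary above.

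For the dualisable part I would argue via \Cref{riginpro}. Restriction along $H_n \leq G$ is a monoidal exact functor, so for $kG$-modules $X,Y$ one has $(X\otimes Y){\downarrow}_{H_n}^G \cong X{\downarrow}_{H_n}^G \otimes Y{\downarrow}_{H_n}^G$, and a triangle $X \to Y \to Z \to \Omega^{-1}X$ restricts to a triangle over $H_n$. Over the finite group $H_n$ the stable module category is rigidly-compactly generated, so its dualisable objects coincide with its compact objects, and in particular they form a thick subcategory closed under $\otimes$ and containing $k$. If $X$ and $Y$ are dualisable then each $X{\downarrow}_{H_n}^G$, $Y{\downarrow}_{H_n}^G$ is dualisable, hence so is $X{\downarrow}_{H_n}^G \otimes Y{\downarrow}_{H_n}^G \cong (X\otimes Y){\downarrow}_{H_n}^G$, and \Cref{riginpro} gives that $X\otimes Y$ is dualisable; the same restriction argument shows $\tacstab(kG)^d$ is closed under cones, shifts and summands, and $k{\downarrow}_{H_n}^G = k$ is dualisable. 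Thus $\tacstab(kG)^d$ is a tensor-triangulated category.

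For the compact part, compact objects form a thick subcategory of any compactly generated triangulated category, so it remains only to show closure under $-\otimes-$. Given compact $X$ and $Y$, \Cref{produccomp} says they are dualisable and that $X{\downarrow}_{H_n}^G$ and $Y{\downarrow}_{H_n}^G$ are nonzero for only finitely many $n$. By the previous paragraph $X\otimes Y$ is again dualisable, and $(X\otimes Y){\downarrow}_{H_n}^G \cong X{\downarrow}_{H_n}^G \otimes Y{\downarrow}_{H_n}^G$ vanishes whenever either factor does, so it is nonzero for only finitely many $n$; applying \Cref{produccomp} once more shows that $X\otimes Y$ is compact. Hence $\tacstab(kG)^c$ is a thick subcategory closed under $\otimes$, i.e. a non-unital tensor-triangulated category, non-unital precisely because $k = \mathbbm{1}$ is not compact.

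The argument is essentially formal: the only place the specific group enters is through \Cref{riginpro} and \Cref{produccomp}, that is, through the tree action underlying the free product decomposition. Accordingly I do not expect a serious obstacle; the points requiring care are the monoidality of restriction and the (standard) identification of dualisable with compact objects in the stable module category of a finite group, both of which are already implicitly used in the proof of \Cref{produccomp}.
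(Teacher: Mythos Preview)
Your argument is correct. For the dualisable part it is essentially the paper's own proof: both use \Cref{riginpro} together with the monoidality of restriction to reduce to the finite-group case. For the compact part you take a different route. The paper observes the general categorical fact that the tensor product of a compact object with a dualisable object is compact (citing \cite[Lemma~2.3]{HallRydh}); since compacts are dualisable here by \Cref{compinri}, closure under $\otimes$ follows immediately and independently of the free-product structure. You instead invoke the explicit characterisation of compacts in \Cref{produccomp} and check directly that the two conditions (dualisability and vanishing on cofinitely many $H_n$) are preserved by $\otimes$. Your approach is more hands-on and specific to this $G$, while the paper's is a one-line abstract argument that would work in any category where compacts happen to be dualisable; both are entirely valid.
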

\begin{proof}
    We first show that the tensor product of two dualisable objects is again dualisable. Take $M$ and $N$ both dualisable. We need to show that $(M \otimes N){\downarrow}_{H_n}^G$ is dualisable, but this is isomorphic to $M{\downarrow}_{H_n}^G \otimes N{\downarrow}_{H_n}^G$ which is clearly dualisable as desired. 
    \par 
   For the second claim it suffices to show that the tensor product of a compact object with a dualisable object is itself compact. This is not hard to see using the definitions, for example see \cite[Lemma~2.3]{HallRydh}. 
\end{proof}
We show that the telescope conjecture will not hold for many infinite free products. 
\begin{theorem}\label{notele}
    Let $G = \bigast\limits_{n \in \mathbb{N}} H_n$ where each $H_n$ is a finite $p$-group. Then the natural map $\textup{Thickid}(G^d) \to \textup{Smashid}(G)$ is not injective. 
\end{theorem}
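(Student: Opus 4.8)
The plan is to produce two distinct elements of $\textup{Thickid}(G^d)$ with the same image under the map $\mathcal{T}\mapsto\locm(\mathcal{T})$, namely $\tacstab(kG)^c$ and $\tacstab(kG)^d$ themselves.

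First I would verify that $\tacstab(kG)^c$ is a genuine (radical) thick tensor ideal of the tensor-triangulated category $\tacstab(kG)^d$. By \Cref{compinri} the compact objects are all dualisable, so $\tacstab(kG)^c$ is a thick triangulated subcategory of $\tacstab(kG)^d$; it is automatically radical since every object of $\tacstab(kG)^d$ is dualisable. For the tensor ideal condition, let $C$ be compact and $D$ dualisable. Then $C\otimes D$ is dualisable by \Cref{ttcompcatex}, and since $C{\downarrow}_{H_n}^G\cong 0$ for all but finitely many $n$ by \Cref{produccomp}, the isomorphism $(C\otimes D){\downarrow}_{H_n}^G\cong C{\downarrow}_{H_n}^G\otimes D{\downarrow}_{H_n}^G$ shows the same is true of $C\otimes D$, so $C\otimes D$ is compact again by \Cref{produccomp}. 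Hence $\tacstab(kG)^c,\tacstab(kG)^d\in\textup{Thickid}(G^d)$, and they are distinct because the tensor unit $k$ is dualisable but not compact: it restricts to the nonzero object $k$ in each $\tacstab(kH_n)$ (as $H_n$ is a nontrivial $p$-group, $k$ is not projective over $kH_n$), so it has infinite support and \Cref{produccomp} applies.

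It then remains to observe that $\locm(\tacstab(kG)^c)=\locm(\tacstab(kG)^d)$: both subcategories contain every module $k{\uparrow}_{H_n}^G$, and these compactly generate $\tacstab(kG)$ (\Cref{comkin}), so in both cases the localising tensor ideal they generate is all of $\tacstab(kG)$. Thus the distinct thick tensor ideals $\tacstab(kG)^c\subsetneq\tacstab(kG)^d$ have equal image, and $\textup{Thickid}(G^d)\to\textup{Smashid}(G)$ is not injective. The argument is essentially formal given the earlier identification of the compact and dualisable objects, so I do not expect a real obstacle; the only point needing a little care is the tensor ideal condition for $\tacstab(kG)^c$ inside $\tacstab(kG)^d$ — one must tensor with dualisable, not merely compact, objects — which is exactly where \Cref{ttcompcatex} and \Cref{produccomp} combine, and the $p$-group hypothesis enters only through the standing assumption $p\mid|H_n|$, which is what makes $k$ non-compact.
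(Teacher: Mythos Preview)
Your proposal is correct and essentially the same as the paper's argument. The paper exhibits the strict inclusion $\textup{Thick}_{\otimes}(\{k{\uparrow}_{H_n}^G\}_{n\in\mathbb{N}})\subsetneq\textup{Thick}_{\otimes}(k)$, which under the $p$-group hypothesis is exactly your inclusion $\tacstab(kG)^c\subsetneq\tacstab(kG)^d$; the reason for strictness (that $k$ has nonzero restriction to every $H_n$, whereas compact objects have finite support by \Cref{produccomp}) and the reason both sides generate all of $\tacstab(kG)$ (the $k{\uparrow}_{H_n}^G$ are compact generators) are identical in both proofs. Your only redundancy is re-deriving that compact~$\otimes$~dualisable is compact, which the paper already records in the proof of \Cref{ttcompcatex}.
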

\begin{proof}
    We claim that $\textup{Thick}_{\otimes}(\{k{\uparrow}_{H_n}^G\}) \subset \textup{Thick}_{\otimes}(k)$ is a strict inclusion. From this the result follows, since $\{k{\uparrow}_{H_n}^G\}_{n \in \mathbb{N}}$ form a set of compact generators, which follows just as \Cref{comkin}, and so generate the whole category as a localising tensor ideal.
    \par 
    We know, as in the proof of \Cref{ttcompcatex}, that the tensor product of a compact object with a dualisable object is compact. We therefore see that any object $X\in \textup{Thick}_{\otimes}(\{k{\uparrow}_{H_n}^G\})$ is compact. From \Cref{produccomp}, we then know that $X{\downarrow}_{H_n}^G \neq 0$ for only finitely many $n \in \mathbb{N}$. However, $k$ restricts to a non-zero object on every $H_n$, and so $k \not\in\textup{Thick}_{\otimes}(\{k{\uparrow}_{H_n}^G\})$ as claimed.
\end{proof}
\begin{corollary}\label{NOTELHERE}
    The telescope conjecture does not hold for the groups $G$ as in \Cref{notele}. 
\end{corollary}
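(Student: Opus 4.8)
The plan is to deduce this immediately from \Cref{notele}. By the definition adopted above, the telescope conjecture holds for $G$ precisely when the natural map $\textup{Thickid}(G^d) \to \textup{Smashid}(G)$, $\mathcal{T} \mapsto \locm(\mathcal{T})$, is a bijection; in particular, it must be injective. But \Cref{notele} shows that for any $G = \bigast\limits_{n \in \mathbb{N}} H_n$ with each $H_n$ a finite $p$-group this map fails to be injective: the distinct thick tensor ideals $\textup{Thick}_{\otimes}(\{k{\uparrow}_{H_n}^G\}_{n\in\mathbb{N}}) \subset \textup{Thick}_{\otimes}(k)$ have the same image, since the modules $k{\uparrow}_{H_n}^G$ form a set of compact generators and hence each of the two ideals generates the whole of $\tacstab(kG)$ as a localising tensor ideal. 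Therefore the map is not a bijection, and the telescope conjecture does not hold for these $G$.

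There is no genuine obstacle at this stage: all of the content has already been established in \Cref{notele}, which in turn relies on the description of the compact objects from \Cref{produccomp} (a compact object restricts nontrivially to only finitely many of the $H_n$) together with the fact, recalled in the proof of \Cref{ttcompcatex}, that the tensor product of a compact object with a dualisable object is compact. Thus the only thing to record is that failure of injectivity of $\textup{Thickid}(G^d) \to \textup{Smashid}(G)$ formally precludes it from being bijective.
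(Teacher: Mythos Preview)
Your proposal is correct and matches the paper's approach exactly: the corollary is stated without proof in the paper, as it follows immediately from \Cref{notele} and the definition of the telescope conjecture as bijectivity of the map $\textup{Thickid}(G^d) \to \textup{Smashid}(G)$.
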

This case is special, however, since there is a map $\textup{Thickid}(G^c) \to \textup{Smashid}(G)$, which is bijective and so a form of the telescope conjecture does hold. 
\par 
We now give an example of a group where the smashing ideals are determined by neither compact nor dualisable objects. We essentially add in the minimal amount of amalgamation in order to show that compact objects aren't closed under tensoring, and so it does not even make sense to talk about thick tensor ideals of compact objects. 
\begin{proposition}
    Let $G = \bigast\limits_{n \in \mathbb{N}}H_n$, where $H_1 = H_2 = C_4\ast_{C_2}C_4$ and $H_n = C_2$ for $n > 2$. Then $k{\uparrow}_{H}^G \otimes k{\uparrow}_{H}^G$ is not compact, where $H$ is the amalgamated copy of $C_2$ in $H_1$. In particular, the compact objects of $\tacstab(kG)$ are not closed under the tensor product.
\end{proposition}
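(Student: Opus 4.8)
The plan is to check that $k{\uparrow}_H^G$ is compact whereas $k{\uparrow}_H^G\otimes k{\uparrow}_H^G$ is an infinite coproduct of copies of the nonzero object $k{\uparrow}_H^G$, and so is not compact; the assertion that the compact objects are not closed under $\otimes$ then follows at once. That $k{\uparrow}_H^G$ is compact is immediate from \Cref{comkin}: our $G$ is an $\textup{H}_1\mathfrak{F}$ group (it acts on a tree with finite stabilisers, obtained from the free-product tree by splicing in the Bass--Serre trees of the amalgams $H_1$ and $H_2$), and $H\cong C_2$ is a finite elementary abelian subgroup, so $k{\uparrow}_H^G$ belongs to the generating set of compact objects given there. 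It is also nonzero, since we will see below that its restriction to $H$ has $k$ as a stable summand.

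For the tensor square, I would first apply the projection formula to obtain a $kG$-module isomorphism $k{\uparrow}_H^G\otimes k{\uparrow}_H^G\cong\big(k{\uparrow}_H^G{\downarrow}_H^G\big){\uparrow}_H^G$, which then holds in $\tacstab(kG)$ as well. By the Mackey formula, $k{\uparrow}_H^G{\downarrow}_H^G\cong\bigoplus_{[g]\in H\backslash G/H}k{\uparrow}_{H\cap{}^gH}^H$, and since $|H|=2$ each summand is either $k{\uparrow}_H^H\cong k$ (when ${}^gH=H$) or the free module $k{\uparrow}_1^H\cong kH$, which is zero in $\tacstab(kH)$. As $H$ is the amalgamated $C_2$ it is central in $H_1$, hence $H\trianglelefteq H_1$, so every $g\in H_1$ has ${}^gH=H$ and contributes a copy of $k$; moreover distinct cosets in $H_1/H=H\backslash H_1/H$ give distinct double cosets in $H\backslash G/H$. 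Since $H_1=C_4\ast_{C_2}C_4$ is infinite (indeed $H_1/H\cong D_\infty$), the index $[H_1:H]$ is (countably) infinite, so $k{\uparrow}_H^G{\downarrow}_H^G\cong\bigoplus_{\aleph_0}k$ in $\tacstab(kH)$. Applying induction, which preserves coproducts since it is a triangulated left adjoint (to restriction), we obtain $k{\uparrow}_H^G\otimes k{\uparrow}_H^G\cong\bigoplus_{\aleph_0}k{\uparrow}_H^G$ in $\tacstab(kG)$. Finally, an infinite coproduct of a nonzero object is never compact: if it were, the identity map would factor through a finite subcoproduct, and then the split surjection onto any summand whose index lies outside that finite set would have to be zero, contradicting that the summand is nonzero. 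Hence $k{\uparrow}_H^G\otimes k{\uparrow}_H^G$ is not compact.

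The one thing requiring care is that the Mackey and projection-formula identities really do descend to the stable category (in particular that the free modules $kH$ and $kG$ become zero there and that induction commutes with the coproducts involved); the remaining input is the elementary observation that $H$ is normal of infinite index in $H_1$. Given these, the argument is purely formal.
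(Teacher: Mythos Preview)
Your proof is correct and follows essentially the same approach as the paper: both apply the projection formula $k{\uparrow}_H^G\otimes k{\uparrow}_H^G\cong (k{\uparrow}_H^G{\downarrow}_H^G){\uparrow}_H^G$ and then use Mackey together with the fact that $H$ is central (hence normal of infinite index) in $H_1$ to exhibit an infinite coproduct of copies of $k{\uparrow}_H^G$. Your argument is in fact slightly cleaner, since you apply Mackey directly for $H\leq G$ and use $|H|=2$ to see that all summands are either $k$ or free, whereas the paper passes through the intermediate group $H_1$ to obtain only a summand inclusion; you also make explicit why $k{\uparrow}_H^G$ is compact and why an infinite coproduct of a nonzero object cannot be compact, points the paper leaves implicit.
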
 
\begin{proof}
    We have an isomorphism $k{\uparrow}_{H}^G \otimes k{\uparrow}_{H}^G \cong k{\uparrow}_{H}^G{\downarrow}_H^G{\uparrow}_H^G$. We claim that $k{\uparrow}_H^G{\downarrow}_H^G$ contains $\bigoplus\limits_{g\in [H\backslash H_1 /H]}k$ as a summand. By the Mackey formula, we know that $k{\uparrow}_H^{H_1}{\downarrow}_H^{H_1} \cong \bigoplus\limits_{g\in [H\backslash H_1 /H]}k$, using that $H_1$ is central in $H$. Again by the Mackey formula, we know that $k{\uparrow}_H^{H_1}$ is a summand of $(k{\uparrow}_H^{H_1}){\uparrow}_{H_1}^G{\downarrow}_{H_1}^G$ from which we infer the claim. 
    \par 
    It follows from this that $k{\uparrow}_{H}^G{\downarrow}_H^G{\uparrow}_H^G$ contains the infinite coproduct $\bigoplus\limits_{g\in [H\backslash H_1 /H]}k{\uparrow}_{H}^G$ as a summand, which is clearly not compact. 
\end{proof}
\begin{theorem}\label{notelex}
Let $G = \bigast\limits_{n \in \mathbb{N}}H_n$, where $H_1 = H_2 = C_4\ast_{C_2}C_4$ and $H_n = C_2$ for $n > 2$. Then the map $\textup{Thickid}(G^d) \to \textup{Smashid}(G)$ is not injective and so the telescope conjecture does not hold. 
\end{theorem}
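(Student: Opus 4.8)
The strategy is to follow the template of \Cref{notele}: produce two distinct thick tensor ideals of $\tacstab(kG)^d$ whose localising tensor ideal closures coincide. I would take $\mathcal{T} := \textup{Thick}_{\otimes}(\{k{\uparrow}_{H_n}^G\ |\ n\in\mathbb{N}\})$, where for $n=1,2$ we induce up from the whole (infinite) amalgamated factor $H_n = C_4\ast_{C_2}C_4$, and compare it with the full category $\tacstab(kG)^d = \textup{Thick}_{\otimes}(\mathbbm{1})$. The goal is to show that $\mathcal{T}\subsetneq\tacstab(kG)^d$ is a strict inclusion while $\locm(\mathcal{T}) = \tacstab(kG) = \locm(\tacstab(kG)^d)$; by \Cref{tel2} these localising closures are smashing ideals, so this exhibits two distinct elements of $\textup{Thickid}(G^d)$ with the same image in $\textup{Smashid}(G)$, i.e. the comparison map is not injective, and hence the telescope conjecture fails.

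The first thing to verify is that each $k{\uparrow}_{H_n}^G$ is dualisable, so that $\mathcal{T}$ really is a thick tensor ideal inside $\tacstab(kG)^d$. For $n>2$ this is (the proof of) \Cref{riginpro} since $H_n$ is finite. For $n=1,2$ I would use the criterion of \cite[Proposition~5.8]{kendall2024costratificationstablecategoriesinfinite} — a module is dualisable iff its restriction to every finite subgroup is — together with the Mackey formula. Every finite subgroup $F\leq G$ is subconjugate to one of the $H_n$, distinct free factors (and distinct conjugates of a given free factor) intersect trivially, and each free factor is self-normalising; so the double-coset sum computing $k{\uparrow}_{H_n}^G{\downarrow}_F^G$ has a single nonprojective term, giving $k{\uparrow}_{H_n}^G{\downarrow}_F^G\cong k$ if $F$ is subconjugate to $H_n$ and $\cong 0$ otherwise — dualisable over $kF$ in either case.

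Next I would compute the two localising closures. That $\locm(\tacstab(kG)^d) = \tacstab(kG)$ is immediate, since $\mathbbm{1}=k$ is dualisable and $\locm(\mathbbm{1}) = \tacstab(kG)$. For $\locm(\mathcal{T})$, the point is that the $G$-tree from the free product decomposition has trivial edge stabilisers, so exactly as in the proof of \Cref{produccomp} its augmented cellular chain complex yields a stable isomorphism $\bigoplus_{n\in\mathbb{N}}k{\uparrow}_{H_n}^G\cong k$ in $\tacstab(kG)$; hence $k\in\locm(\{k{\uparrow}_{H_n}^G\}) = \locm(\mathcal{T})$ and so $\locm(\mathcal{T}) = \tacstab(kG)$. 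Strictness of $\mathcal{T}\subsetneq\tacstab(kG)^d$ then comes from the same Mackey computation: $k{\uparrow}_{H_n}^G{\downarrow}_{H_m}^G\cong k$ for $m=n$ and $\cong 0$ for $m\neq n$, so each generator of $\mathcal{T}$ is nonzero on only one factor. Since the full subcategory of dualisable $X$ with $X{\downarrow}_{H_m}^G\cong 0$ for all but finitely many $m$ is a thick tensor ideal of $\tacstab(kG)^d$ — restriction is exact and commutes with $\Omega^{-1}$, coproducts and $\otimes$, and $0\otimes -\cong 0$ — it contains $\mathcal{T}$; but $k=\mathbbm{1}$ is nonzero on every $H_m$, so $k\notin\mathcal{T}$.

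The step I expect to be the real obstacle is the dualisability of $k{\uparrow}_{H_n}^G$ for the infinite factors $n=1,2$. One has to be careful here because, in sharp contrast, $k{\uparrow}_{H}^G$ for the finite amalgamated subgroup $H\leq H_1$ is \emph{not} dualisable: by Mackey its restriction to $H$ is an infinite coproduct of copies of $k$ (the phenomenon driving the proposition immediately preceding this theorem, which is exactly why compact objects here fail to be closed under $\otimes$). So the whole argument rests on pinning down precisely which double cosets contribute nonprojective summands to $k{\uparrow}_{H_n}^G{\downarrow}_F^G$, which is where malnormality of the free factors of $G$ is used.
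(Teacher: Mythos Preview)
Your approach is correct but genuinely different from the paper's. The paper never induces from the infinite factors $H_1,H_2$ at all: it sets $K=\bigoplus_{n>2}k{\uparrow}_{H_n}^G$, observes that $K$ is dualisable (only finite factors are involved, so \Cref{riginpro} applies directly), and then shows that $\textup{Thick}_{\otimes}(\{k{\uparrow}_{H_n}^G\}_{n>2})\subsetneq\textup{Thick}_{\otimes}(K)$ while both generate the same localising tensor ideal $\locm(K)$. So the paper exhibits non-injectivity at a \emph{proper} smashing ideal, whereas you exhibit it at the whole category. The trade-off is clear: the paper's route sidesteps entirely the question of whether $k{\uparrow}_{H_1}^G$ and $k{\uparrow}_{H_2}^G$ are dualisable, which is exactly the step you flag as the real obstacle; your route is more faithfully parallel to \Cref{notele} and shows the stronger fact that the map already fails to be injective on the fibre over $\tacstab(kG)$, but at the cost of the malnormality/Mackey verification you outline. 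Your Mackey argument for that verification is correct --- for finite $F\leq H_n$ the double coset of the identity contributes $k$ and every other contributes a free summand since $H_n\cap{}^gH_n=1$ for $g\notin H_n$ --- so both proofs go through.
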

\begin{proof}
Let $K = \bigoplus\limits_{n > 2}k{\uparrow}_{H_n}^G$. We see immediately that $\locm(K) = \locm(\{k{\uparrow}_{H_n}^G\}_{n > 2})$ and it follows from \Cref{riginpro} that $K$ is dualisable. We claim that $\textup{Thick}_{\otimes}(\{k{\uparrow}_{H_n}^G\}_{n > 2}) \subset \textup{Thick}_{\otimes}(K)$ is a strict inclusion. This would then show two distinct thick tensor ideals of dualisable objects which generate the same localising tensor ideal, showing the statement. 
\par 
    Indeed, we may argue as in \Cref{compinri} to see that anything in $\textup{Thick}_{\otimes}(\{k{\uparrow}_{H_n}^G\}_{n > 2})$ must restrict to a non-zero module on only finitely many $n > 2$. This is clearly not the case for $K$ and hence $K \not\in\textup{Thick}_{\otimes}(\{k{\uparrow}_{H_n}^G\}_{n > 2})$.
\end{proof}
Obviously, many more examples may be constructed in a similar way. 
\par
We now consider the Balmer spectrum. As suggested by the lack of telescope conjecture, this turns out to be a delicate question. 
\begin{proposition}\label{oneusrj}
    The natural map $\textup{Thickid}^f(G^d) \to \prod\limits_{n \in \mathbb{N}}\textup{Thickid}^f(H_n^d)$ is an epimorphism.
\end{proposition}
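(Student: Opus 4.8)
The plan is to prove the stronger assertion that this map, which I will denote $\Phi$, is \emph{surjective}; a surjective homomorphism of distributive lattices is an epimorphism (in fact an extremal epimorphism, by \Cref{reflectingthings}(ii)), so this suffices. First I would unwind the definition of $\Phi$. Each restriction functor $(-){\downarrow}_{H_n}^G \colon \tacstab(kG) \to \tacstab(kH_n)$ is tensor-triangulated, hence preserves dualisable objects and restricts to a tensor-triangulated functor $\tacstab(kG)^d \to \tacstab(kH_n)^d$; the associated lattice homomorphism $\textup{Thickid}^f(G^d) \to \textup{Thickid}^f(H_n^d)$ sends $\textup{Thick}_{\otimes}(M)$ to $\textup{Thick}_{\otimes}(M{\downarrow}_{H_n}^G)$, and $\Phi$ is the map assembled from these, so that $\Phi\bigl(\textup{Thick}_{\otimes}(M)\bigr) = \bigl(\textup{Thick}_{\otimes}(M{\downarrow}_{H_n}^G)\bigr)_{n \in \mathbb{N}}$. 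I would also record that, each $H_n$ being a finite group, its stable module category is rigidly-compactly generated, so that dualisable and compact objects coincide in $\tacstab(kH_n)$ and every finite element of $\textup{Thickid}(H_n^d)$ has the form $\textup{Thick}_{\otimes}(M_n)$ for a finitely generated $kH_n$-module $M_n$.

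To prove surjectivity I would take an arbitrary element $(\mathcal{I}_n)_{n \in \mathbb{N}}$ of $\prod_{n \in \mathbb{N}} \textup{Thickid}^f(H_n^d)$, choose for each $n$ a finitely generated $kH_n$-module $M_n$ with $\mathcal{I}_n = \textup{Thick}_{\otimes}(M_n)$, and form the single module $M := \bigoplus_{n \in \mathbb{N}} M_n{\uparrow}_{H_n}^G$ in $\tacstab(kG)$. The computation that makes this work is exactly the one already used in the proof of \Cref{produccomp}: since $G$ acts on a tree with trivial edge stabilisers --- equivalently, since distinct factors of the free product and their conjugates intersect trivially --- the Mackey formula shows that $M_n{\uparrow}_{H_n}^G{\downarrow}_{H_m}^G$ is projective, hence zero in $\tacstab(kH_m)$, when $m \neq n$, and is stably isomorphic to $M_n$ when $m = n$. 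Summing over $n$ yields $M{\downarrow}_{H_m}^G \cong M_m$ in $\tacstab(kH_m)$ for every $m$. In particular every restriction $M{\downarrow}_{H_m}^G$ is dualisable, so $M$ itself is dualisable by \Cref{riginpro}, and therefore $\Phi\bigl(\textup{Thick}_{\otimes}(M)\bigr) = \bigl(\textup{Thick}_{\otimes}(M_n)\bigr)_{n} = (\mathcal{I}_n)_{n}$. This proves $\Phi$ is surjective.

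I do not expect any genuinely hard step: the only content is the observation that an arbitrarily prescribed tuple of restrictions is realised by the single ``diagonal'' dualisable module $\bigoplus_n M_n{\uparrow}_{H_n}^G$, and this is just the structure of dualisable modules over an infinite free product already isolated in \Cref{produccomp}. It is worth emphasising that surjectivity is the easy half of the story: the delicate question is whether $\Phi$ is also injective, and it is not in general --- it fails as soon as one of the $H_n$ has $p$-rank at least two --- so an epimorphism is the most one can hope for at this level of generality.
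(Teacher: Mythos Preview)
Your proposal is correct and follows essentially the same approach as the paper: prove surjectivity by producing, for a given tuple $(\textup{Thick}_{\otimes}(M_n))_n$, a dualisable $kG$-module $M$ with $M{\downarrow}_{H_n}^G \cong M_n$ for all $n$, then invoke \Cref{reflectingthings} and \Cref{riginpro}. The only difference is cosmetic: the paper cites the construction in \cite[Section~7]{MSstabcat} to obtain such an $M$, whereas you write it down explicitly as $\bigoplus_n M_n{\uparrow}_{H_n}^G$ and verify the restrictions via Mackey, which is entirely in the spirit of the surrounding arguments (cf.\ \Cref{compinri} and \Cref{produccomp}).
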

\begin{proof}
    Let $(\textup{Thick}_{\otimes}(M_n))_{n \in \mathbb{N}}$ be an element of the product. By the construction in \cite[Section~7]{MSstabcat}, we may find a $kG$-module $M$ such that $M{\downarrow}_{H_n}^G \cong M_n$ for each $n \in \mathbb{N}$. We see from \Cref{riginpro} that $M$ is dualisable, and altogether this shows the surjectivity of the map. We conclude that it is an epimorphism by \Cref{reflectingthings}.
\end{proof}
We turn now to injectivity of the map; in general we will show that this map is not always injective. However, we begin with the groups where we can show that it is indeed injective.
\par
Before we do this, we recall some notation and an important result from \cite{BalmSurj}. Let $M$ be a dualisable object and complete the coevaluation morphism $k \to M \otimes M^*$, i.e. the unit of the adjunction, to a triangle of the following form. 
\begin{equation}\label{ximeq}\begin{tikzcd}
     C_M \arrow{r}{\xi_M} & k \arrow{r} & M \otimes M^* \arrow{r} & \Omega^{-1}(C_M)\end{tikzcd}
\end{equation}
Since $\tacstab(kG)^d$ is a tensor-triangulated category where every object is dualisable, we may apply \cite[Corollary~2.11]{BalmSurj} to see that \begin{equation}\label{52}\textup{Thick}_{\otimes}(M) = \{X \in \tacstab(kG)^d\ |\ \xi_M^{\otimes n}\otimes X = 0 \textup{ for some } n \geq 0\}\end{equation}
\begin{lemma}\label{ximiszero}
    Let $H = C_p$ and $M$ be any non-projective finitely generated $kH$-module. Then $\xi_M = 0$. 
\end{lemma}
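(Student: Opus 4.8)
The plan is to prove that the morphism $k \to M\otimes M^{*}$ in the triangle \eqref{ximeq} -- the coevaluation of $M$, i.e.\ the unit of the adjunction $-\otimes M\dashv M^{*}\otimes -$ -- is a \emph{split monomorphism} in $\tacstab(kC_p)$. Write $\eta\colon k\to M\otimes M^{*}$ for this map, so that \eqref{ximeq} reads $C_M\xrightarrow{\xi_M}k\xrightarrow{\eta}M\otimes M^{*}\to\Omega^{-1}(C_M)$. Once we know $\eta$ admits a left inverse $r$, we are done: consecutive maps in a triangle compose to zero, so $\eta\circ\xi_M=0$, and composing on the left with $r$ gives $\xi_M=0$.

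To construct $r$ I would first reduce to the case that $M$ is indecomposable. Decompose $M=\bigoplus_\alpha M_\alpha$ into indecomposables and discard the projective summands (these are zero in $\tacstab(kC_p)$); since $M$ is non-projective, at least one summand remains. A direct check (using a basis adapted to the decomposition, or the identification $\tachom_{kC_p}(k,M\otimes M^{*})\cong\tachom_{kC_p}(M,M)$ under which $\eta$ corresponds to $\mathrm{id}_M=\sum_\alpha\iota_\alpha\pi_\alpha$) shows that $\eta$ is ``block diagonal'': its component in the summand $M_\alpha\otimes M_\alpha^{*}$ of $M\otimes M^{*}$ is the coevaluation of $M_\alpha$, and its components in the off-diagonal summands $M_\alpha\otimes M_\beta^{*}$ ($\alpha\neq\beta$) vanish. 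Hence it is enough to produce a left inverse for the coevaluation of a single non-projective indecomposable summand $M_\alpha$ and then precompose with the projection $M\otimes M^{*}\twoheadrightarrow M_\alpha\otimes M_\alpha^{*}$.

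So assume $M$ is a non-projective indecomposable $kC_p$-module. Since $kC_p\cong k[t]/(t^{p})$ is uniserial, $M$ has $k$-dimension $d$ for some $1\le d\le p-1$, so in particular $p\nmid\dim_k M$. Now recall the elementary fact that the composite $k\xrightarrow{\ \eta\ }M\otimes M^{*}\xrightarrow{\ \sim\ }M^{*}\otimes M\xrightarrow{\ \mathrm{ev}\ }k$ of coevaluation, the symmetry, and evaluation is the categorical dimension of $M$, which in $\mathrm{Mod}(kC_p)$ is multiplication by $\dim_k M=d$. As $p\nmid d$, the scalar $d\cdot\mathrm{id}_k\in\mathrm{End}_{kC_p}(k)=k$ is a unit, and hence so is its image in the ring $\tachom_{kC_p}(k,k)$, which is nonzero because $k$ is not projective. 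Thus the displayed composite is an isomorphism in $\tacstab(kC_p)$, and rescaling $\mathrm{ev}\circ(\text{symmetry})$ by its inverse yields a left inverse of $\eta$. This completes the argument, giving $\xi_M=0$.

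I expect the only real work to be the two ``standard'' inputs just invoked -- the block-diagonal behaviour of the coevaluation on a direct sum, and the identification of the coevaluation--symmetry--evaluation composite with multiplication by $\dim_k M$ -- which are routine for strongly dualisable objects in a symmetric monoidal category (see e.g.\ \cite{HPSasht}) but need a little care about which structure map is which. I also note an alternative, less self-contained route: since $\textup{Spc}(\tacstab(kC_p)^{d})$ is a single point, $\textup{Thick}_{\otimes}(M)$ is all of $\tacstab(kC_p)^{d}$ and in particular contains $k$, so \eqref{52} forces $\xi_M^{\otimes n}=0$ for some $n\ge 0$; but upgrading this $\otimes$-nilpotence to the on-the-nose vanishing $\xi_M=0$ still seems to require essentially the dimension computation above, so I would give the direct proof.
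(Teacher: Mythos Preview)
Your proof is correct but follows a different route from the paper's. The paper argues in two lines: first, $\xi_M\otimes M=0$ by the unit--counit relation (this is \cite[Lemma~7.5]{BENSONCARLSONGEN}); second, in $\tacstab(kC_p)$ every nonzero compact object is $\otimes$-faithful by \cite[Proposition~5.1]{ttrum}, so tensoring with $M$ reflects zero morphisms and hence $\xi_M=0$. Your argument instead exhibits an explicit retraction of the coevaluation by computing the categorical dimension of a non-projective indecomposable $kC_p$-module and observing it is prime to $p$. Your approach is more elementary and entirely self-contained---it avoids invoking the tensor-triangular field machinery of \cite{ttrum}---but it is specific to $C_p$, leaning on the classification of indecomposables; the paper's argument, by contrast, works verbatim for any tensor-triangular field, which is exactly the generality needed later in \Cref{NOSPEC22}. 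Note also that your reduction to a single indecomposable summand is genuinely necessary: the total dimension of a non-projective module can be divisible by $p$, so the trace of $\mathrm{id}_M$ need not be a unit without passing to a summand.
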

\begin{proof}
    We know that $\xi_M \otimes M = 0$ as shown in \cite[Lemma~7.5]{BENSONCARLSONGEN} for example. It is shown in \cite[Proposition~5.1]{ttrum} that any non-zero compact object $X$ in $\tacstab(kH)$ is $\otimes$-faithful, that is if $X \otimes f = 0$ for some morphism $f$, then $f = 0$. From this, we immediately conclude that $\xi_M = 0$. 
\end{proof}
Note that any non-zero compact object being $\otimes$-faithful is part of the definition of a tensor-triangular field \cite{ttrum}, and so a similar statement holds for any tensor-triangular field. 
\begin{proposition}\label{constantc}
    Suppose $H$ is a finite group of $p$-rank one. Then there exists a constant $C$ such that for any non-projective finitely generated $kH$-module $M$ we have $\xi_M^{\otimes C} = 0$.
\end{proposition}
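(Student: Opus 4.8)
The plan is first to pin down $\textup{Spc}(\tacstab(kH)^d)$. A finite group of $p$-rank one has periodic cohomology, so $H^*(H,k)$ has Krull dimension $1$; moreover every elementary abelian $p$-subgroup of $H$ is cyclic of order $p$, and all of these are conjugate, since each lies in a Sylow $p$-subgroup, which is cyclic or generalized quaternion and hence has a unique subgroup of order $p$. By Quillen stratification, $\textup{Proj}(H^*(H,k))$ is therefore a single closed point, so by the Benson--Carlson--Rickard classification \cite{bcrthick} the category $\tacstab(kH)^d$ has exactly two thick tensor ideals, $0$ and the whole category. Consequently, for \emph{every} non-projective finitely generated $kH$-module $M$ we have $\textup{Thick}_{\otimes}(M) = \tacstab(kH)^d$, hence $k \in \textup{Thick}_{\otimes}(M)$, and \eqref{52} provides some integer $n$, a priori depending on $M$, with $\xi_M^{\otimes n} = 0$. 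The substance of the proposition is to make $n$ independent of $M$.

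To do this I would first reduce to the case that $H$ is a $p$-group. If $P$ is a Sylow $p$-subgroup of $H$, then $[H:P]$ is invertible in $k$, so restriction $\tachom_{kH}(A,B) \to \tachom_{kP}(A{\downarrow}_P^H, B{\downarrow}_P^H)$ is split injective; as restriction is tensor-triangulated it sends $\xi_M$ to $\xi_{M{\downarrow}_P^H}$, and $M{\downarrow}_P^H$ is again non-projective, so $\xi_M^{\otimes C} = 0$ if and only if $\xi_{M{\downarrow}_P^H}^{\otimes C} = 0$. Thus we may assume $H = P$ is cyclic or generalized quaternion, so $P$ has a finite period $d$ and $\Omega^d$ is isomorphic to the identity on $\tacstab(kP)$. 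Now fix a homogeneous parameter $\zeta \in H^m(P,k)$ (a periodicity class will do); since $\textup{Proj}(H^*(P,k))$ is a point, the Carlson module $L_\zeta$ has empty support, so $L_\zeta \cong 0$ in $\tacstab(kP)$ and $\hat\zeta \colon \Omega^m k \to k$ is a stable isomorphism. Using this, together with the fact that for non-projective $M$ the graded stable endomorphism ring $\bigoplus_i \tachom_{kP}(M,\Omega^{-i}M)$ is finitely generated over $H^*(P,k)$ (Evens--Venkov) with full support, the goal is to show that $k$ can be built from $M$ inside $\textup{Thick}_{\otimes}(M)$ in a bounded number of triangle steps, the bound depending only on $\deg \zeta$ and $\dim_k H^*(P,k)/(\zeta)$ and not on $M$. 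The tensor-nilpotence bookkeeping underlying \eqref{52} --- using repeatedly that $\xi_M \otimes M = 0$, exactly as in the proof of \cite[Corollary~2.11]{BalmSurj} --- then converts ``$k$ is built from $M$ in $\ell$ steps'' into $\xi_M^{\otimes \ell} = 0$, which gives the desired constant $C$.

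I expect the main obstacle to be precisely this last point: upgrading the qualitative statement $k \in \textup{Thick}_{\otimes}(M)$ to a generation of $k$ by $M$ whose length is uniform in $M$. The existence of \emph{some} finite length is immediate from the classification of ideals, but extracting an explicit, $M$-independent bound is where the $p$-rank one hypothesis --- equivalently, $\textup{Proj}(H^*(P,k))$ being a single point, equivalently Krull dimension one --- must be exploited quantitatively, via an explicit Koszul-type resolution of a syzygy of $k$ in terms of $M \otimes M^*$ assembled from the single parameter $\zeta$. Along the way one should also check the auxiliary inputs used above: the vanishing $L_\zeta \cong 0$, the finite generation of the graded stable endomorphism ring, and the precise form of the nilpotence bookkeeping.
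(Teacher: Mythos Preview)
Your approach correctly identifies that for $p$-rank one groups the spectrum is a point, so every non-projective $M$ tensor-generates the whole category and \emph{some} power of $\xi_M$ vanishes. You also correctly isolate the crux: upgrading this to a bound independent of $M$. But your proposal does not actually achieve this upgrade. The sketch you give --- reduce to a Sylow $p$-subgroup, invoke a periodicity class $\zeta$ so that $L_\zeta \cong 0$, and then assemble a uniform Koszul-type generation of $k$ from $M \otimes M^*$ --- remains programmatic. You do not explain how the single parameter $\zeta$ produces, for an \emph{arbitrary} non-projective $M$, a construction of $k$ from $M$ in a number of steps bounded independently of $M$; the fact that $\hat\zeta$ is a stable isomorphism says nothing about $M$, and the finite generation of the stable endomorphism ring of $M$ over $H^*(P,k)$ does not by itself give uniform control on generation length. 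This is exactly the content of the proposition, and you have left it open while acknowledging it as the main obstacle.

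The paper's proof is much shorter and sidesteps this difficulty entirely. It observes that restriction is tensor-triangulated, so $\xi_M{\downarrow}_E^H = \xi_{M{\downarrow}_E^H}$; since every elementary abelian $p$-subgroup $E$ of $H$ is cyclic of order $p$, the preceding \Cref{ximiszero} gives $\xi_M{\downarrow}_E^H = 0$ outright. One then invokes \cite[Theorem~2.4]{BENSONCARLSONGEN}, which supplies exactly the missing uniformity: for any finite group $H$ there is a constant $C$, depending only on $H$, such that any stable morphism which restricts to zero on all elementary abelian subgroups has vanishing $C$-fold tensor power. This external nilpotence theorem is the ingredient your argument lacks; with it there is no need to reduce to a Sylow subgroup, analyse periodicity, or track generation lengths.
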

\begin{proof}
    First, we note that the triangle \eqref{ximeq} is preserved by restriction, by which we mean that for a subgroup $F \leq H$ we have $\xi_M{\downarrow}_F^H = \xi_{M{\downarrow}_F^H}$. In particular, for any elementary abelian subgroup $F\leq H$, which is necessarily cyclic of prime order, we know from \Cref{ximiszero} that $\xi_M{\downarrow}_F^H = 0$. The result now follows from \cite[Theorem~2.4]{BENSONCARLSONGEN}.
\end{proof}
We note that this answers a question of Benson and Carlson \cite[Question~1.3]{BENSONCARLSONGEN} in this case. See \Cref{deftensorgenerat} for the definition of tensor generation.
\begin{corollary}
    Let $H$ be a finite group of $p$-rank one. Then there exists a constant $C$ depending only on $H$ such that for every non-projective finitely generated module $M$, any other finitely generated $kH$-module $N$ may be tensor generated from $M$ in at most $C$ steps. 
\end{corollary}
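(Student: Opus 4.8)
The plan is to obtain the corollary as an essentially immediate consequence of \Cref{constantc}, once the number of steps needed to tensor generate $N$ from $M$ has been rephrased in terms of the vanishing of a tensor power of the morphism $\xi_M$ of \eqref{ximeq}. Recall from \Cref{deftensorgenerat} and \cite[Corollary~2.11]{BalmSurj}---the same input that yields \eqref{52}---that a dualisable object $N$ is tensor generated from $M$ in at most $n$ steps precisely when $\xi_M^{\otimes n}\otimes N = 0$. Thus the task reduces to producing a single integer, depending only on $H$, for which this vanishing holds for every non-projective finitely generated $M$ and every finitely generated $N$ simultaneously.

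First I would note that when $H$ is finite every finitely generated $kH$-module is dualisable in $\tacstab(kH)$, so all the modules appearing in the statement lie in $\tacstab(kH)^d$ and the discussion above applies to them. Next I would invoke \Cref{constantc}, which provides a constant $C$, depending only on $H$, such that $\xi_M^{\otimes C} = 0$ as a morphism in $\tacstab(kH)$ for every non-projective finitely generated $kH$-module $M$. Since tensoring a zero morphism with any object again gives a zero morphism, it follows that $\xi_M^{\otimes C}\otimes N = 0$ for every finitely generated $kH$-module $N$, and hence, by the characterisation recalled above, $N$ is tensor generated from $M$ in at most $C$ steps. (As a sanity check this forces $\textup{Thick}_{\otimes}(M) = \tacstab(kH)^d$ for every non-projective $M$, consistent with the classification of thick tensor ideals, since $\textup{Proj}(H^*(H,k))$ consists of a single point when $H$ has $p$-rank one: the maximal elementary abelian $p$-subgroups are all cyclic of order $p$ and form a single conjugacy class.)

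The only point that needs care is the bookkeeping identifying the $n$-th stage of the tensor generation filtration of \Cref{deftensorgenerat} with the condition $\xi_M^{\otimes n}\otimes N = 0$ on the nose, so that the uniform bound $\xi_M^{\otimes C} = 0$ genuinely translates into a bound of at most $C$ steps rather than some slightly larger constant; this is exactly the content of \cite[Corollary~2.11]{BalmSurj}, and beyond keeping track of indices there is nothing further to do.
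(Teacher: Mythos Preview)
Your overall strategy matches the paper's: invoke \Cref{constantc} to get $\xi_M^{\otimes C}=0$ uniformly, then translate this vanishing into a bound on the number of tensor-generation steps. The paper's proof is the one-liner ``immediate from \Cref{constantc} and \cite[Theorem~8.10]{BENSONCARLSONGEN}'', so the shape is the same.

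However, the translation step is misattributed in your write-up. You claim that ``$N$ is tensor generated from $M$ in at most $n$ steps precisely when $\xi_M^{\otimes n}\otimes N=0$'' and that this is ``exactly the content of \cite[Corollary~2.11]{BalmSurj}''. It is not: that corollary (equivalently \eqref{52}) only says $N\in\textup{Thick}_\otimes(M)$ if and only if $\xi_M^{\otimes n}\otimes N=0$ for \emph{some} $n$; it gives no relation between the particular exponent and the number of steps in the sense of \Cref{deftensorgenerat}. The implication you actually need, namely that $\xi_M^{\otimes n}\otimes N=0$ forces tensor generation in at most $n$ steps, requires its own short argument via the octahedral axiom applied to the factorisation $\xi_M^{\otimes n}=(\xi_M\otimes\textup{id})\circ(\textup{id}\otimes\xi_M^{\otimes n-1})$. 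In the paper this appears later as \Cref{tengenmsteps} (a mild generalisation of \cite[Proposition~8.9]{BENSONCARLSONGEN}), and for the corollary itself the paper simply cites \cite[Theorem~8.10]{BENSONCARLSONGEN}. Your final paragraph acknowledges that some bookkeeping is needed here but then points to the wrong source; once you replace the reference to \cite[Corollary~2.11]{BalmSurj} by \cite[Theorem~8.10]{BENSONCARLSONGEN} (or supply the octahedral argument), the proof is complete. Note also that the biconditional you assert is stronger than required and is not obviously true on the nose; only the forward direction is used.
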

\begin{proof}
    This is immediate from \Cref{constantc} and \cite[Theorem~8.10]{BENSONCARLSONGEN}.
\end{proof}
We can now determine the spectrum for various groups.
\begin{theorem}\label{specfortrees}
    Let $\mathcal{H}$ be a finite collection of finite groups of $p$-rank one, and for each $n \in \mathbb{N}$ let $H_n \in\mathcal{H}$. Suppose $G = \bigast\limits_{n\in\mathbb{N}}H_n$. Then $\textup{Spc}(\tacstab(kG)^d)$ is homeomorphic to the Stone-\v{C}ech compactification of $\mathbb{N}$. 
\end{theorem}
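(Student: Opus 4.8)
The plan is to reduce the computation to the case of a free product of cyclic groups of prime order, where each stable module category $\tacstab(kH_n)$ behaves like a tensor-triangular field, and then identify the spectrum with $\beta\mathbb{N}$. By Stone duality (\Cref{Stonedual}), it suffices to describe the distributive lattice $\textup{Thickid}^f(G^d)$ and show it is isomorphic to the lattice of clopen subsets of $\beta\mathbb{N}$, equivalently the Boolean algebra $\mathcal{P}(\mathbb{N})$ of all subsets of $\mathbb{N}$. The first step is to use \Cref{constantc}: since each $H_n$ has $p$-rank one and $\mathcal{H}$ is finite, there is a \emph{uniform} constant $C$ such that $\xi_M^{\otimes C} = 0$ for every non-projective finitely generated $kH_n$-module $M$, across all $n$. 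Combined with the description \eqref{52}, this forces $\textup{Thick}_{\otimes}(M_n) \in \{0, \tacstab(kH_n)^d\}$ for each dualisable object; so each $\textup{Thickid}^f(H_n^d)$ is the two-element lattice, and the spectrum of each factor is a point.

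Next I would analyse the map $\Phi\colon \textup{Thickid}^f(G^d) \to \prod_{n \in \mathbb{N}}\textup{Thickid}^f(H_n^d) \cong \mathcal{P}(\mathbb{N})$ induced by the restriction functors, sending $\textup{Thick}_{\otimes}(M)$ to the set $\{n : M{\downarrow}_{H_n}^G \not\cong 0\}$. \Cref{oneusrj} already gives that $\Phi$ is an epimorphism, hence (by \Cref{reflectingthings}(ii)) surjective; the real work is injectivity. For this I would show that a dualisable object $M$ is determined, up to the thick tensor ideal it generates, by its ``support set'' $S(M) = \{n : M{\downarrow}_{H_n}^G \neq 0\}$: concretely, I claim $\textup{Thick}_{\otimes}(M) = \textup{Thick}_{\otimes}(N)$ as soon as $S(M) = S(N)$. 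One direction to pin down is that $M \in \textup{Thick}_{\otimes}(N)$ whenever $S(M) \subseteq S(N)$; using the uniform bound $C$ and \eqref{52}, it is enough to check that $\xi_N^{\otimes C}\otimes M = 0$, which can be verified after restriction to each $H_n$ via \Cref{hbil}-style detection (Chouinard/elementary-abelian detection, here just the $H_n$ themselves since every finite subgroup is conjugate into some $H_n$): on $H_n$ with $n \in S(N)$ we have $\xi_N{\downarrow}_{H_n}^G = \xi_{N{\downarrow}_{H_n}^G} = 0$ by \Cref{ximiszero} (or \Cref{constantc}), killing the tensor factor, and on $n \notin S(N) \supseteq S(M)$ the object $M{\downarrow}_{H_n}^G$ itself vanishes. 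This shows $\Phi$ is injective, hence an isomorphism of distributive lattices by \Cref{reflectingthings}(iii).

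Finally, translating back through Stone duality: $\textup{Spc}(\tacstab(kG)^d)$ is the spectral space dual to $\textup{Thickid}^f(G^d)^{op} \cong \mathcal{P}(\mathbb{N})^{op} \cong \mathcal{P}(\mathbb{N})$ (the Boolean algebra is isomorphic to its opposite). The spectral space associated to the Boolean algebra $\mathcal{P}(\mathbb{N})$ under Stone duality is exactly the Stone space of $\mathcal{P}(\mathbb{N})$, which is the Stone–\v{C}ech compactification $\beta\mathbb{N}$. This gives the claimed homeomorphism. I would also remark that, at the level of points, the prime ideals of $\tacstab(kG)^d$ correspond to ultrafilters on $\mathbb{N}$, with the principal ultrafilters giving the points coming from $\textup{Spc}(\tacstab(kH_n)^d)$ and the free ultrafilters giving the ``points at infinity'', matching the description of $\beta\mathbb{N}$.

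The main obstacle is the injectivity of $\Phi$, i.e.\ showing that the thick tensor ideal generated by a dualisable object $M$ depends only on the set $S(M)$ of indices where $M$ restricts nontrivially. This is where the uniform $p$-rank-one hypothesis is essential — without the single constant $C$ from \Cref{constantc} one cannot control $\xi_M^{\otimes n}$ simultaneously across all the infinitely many factors, and indeed \Cref{NOSPECFORTREES} shows the analogous map fails to be surjective, let alone bijective, once the $H_n$ have $p$-rank $\geq 2$. The detection argument via restriction to the $H_n$ (valid since every finite subgroup of $G$ is subconjugate to some $H_n$, so the $H_n$ form a detecting family in the sense used for \Cref{corlomain}) is the technical heart, but with the uniform bound in hand it reduces to the two computations above.
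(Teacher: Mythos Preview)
Your approach is essentially the paper's: show the restriction map $\Phi\colon\textup{Thickid}^f(G^d)\to\prod_n\textup{Thickid}^f(H_n^d)$ is a bijection (surjectivity from \Cref{oneusrj}, injectivity from the uniform nilpotence bound of \Cref{constantc} combined with \eqref{52}), then apply Stone duality and identify the spectral coproduct of points with $\beta\mathbb{N}$.

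There is one genuine gap in your injectivity argument. You need the \emph{morphism} $\xi_N^{\otimes C}\otimes M$ to vanish once you know its restriction to every $H_n$ is zero, but \Cref{hbil} and \Cref{corlomain} give only object-level detection (joint conservativity of the restrictions), and joint conservativity does not in general imply joint faithfulness on morphisms. The paper closes this by invoking G\'omez's product decomposition of $\tacstab(kG)$, which does give morphism-level detection for these free products; equivalently, you can tensor with the stable isomorphism $k\cong\bigoplus_n k{\uparrow}_{H_n}^G$ coming from the tree action (see the proof of \Cref{produccomp}) and use the projection formula. A minor related slip: for $n\in S(N)$ you write $\xi_N{\downarrow}_{H_n}^G=0$, but \Cref{ximiszero} only covers $H_n=C_p$; for general $p$-rank one groups you only get $(\xi_N^{\otimes C}){\downarrow}_{H_n}^G=0$ via \Cref{constantc}, which is what you actually set up and what suffices. (Also, in your closing remark, \Cref{NOSPECFORTREES} shows the lattice map fails to be \emph{injective}, not surjective.)
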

\begin{proof}
   We begin by showing that the map $\textup{Thickid}^f(G^d) \to \prod\limits_{n \in \mathbb{N}}\textup{Thickid}^f(H_n^d)$ is injective, which implies by \Cref{reflectingthings} that it is a monomorphism. 
    \par 
    Suppose that $M$ and $X$ are $kG$-modules such that $\textup{Thick}_{\otimes}(M{\downarrow}_{H_n}^G) = \textup{Thick}_{\otimes}(X{\downarrow}_{H_n}^G)$ for each $n \in \mathbb{N}$. We claim that $\xi_M^{\otimes n}\otimes X = 0$ for some $n \geq 0$. Assuming this is true, we conclude from \cite[Corollary~2.11]{BalmSurj}, i.e. the equality in \eqref{52}, that $X \in \textup{Thick}_{\otimes}(M)$; reversing the roles of $X$ and $M$ shows that $\textup{Thick}_{\otimes}(M) = \textup{Thick}_{\otimes}(X)$ as desired. 
    \par 
    We now show the claim. Since $\mathcal{H}$ is a finite collection of groups, we may use \Cref{constantc} to find some constant $C$ such that for any $H \in \mathcal{H}$ and any finitely generated $kH$-module $N$, which is not projective, we have $\xi_N^{\otimes C} = 0$. If $N$ is projective, and $Y \in \textup{Thick}_{\otimes}(N)$, then $Y$ is projective and so $\xi_N \otimes Y = 0$. 
    \par 
    Therefore, we know that $(\xi_M^{\otimes C}\otimes X){\downarrow}_{H_n}^G = 0$ for each $H_n$. Note here that we are using that $M{\downarrow}_{H_n}^G$ is (stably) finitely generated by \Cref{riginpro}. It now follows from \cite[Theorem~3.3]{gomez2024picardgroupstablemodule} that $\xi_M^{\otimes C}\otimes X = 0$ as desired. 
    \par 
    We have shown that the map $\textup{Thickid}^f(G^d) \to \prod\limits_{n \in \mathbb{N}}\textup{Thickid}^f(H_n^d)$ is injective and we know from \Cref{oneusrj} that it is surjective; we conclude that this map is bijective in the category of sets. We know from \Cref{reflectingthings} that this map is therefore an isomorphism of distributive lattices. Furthermore, we know from \cite[Lemma~3.14]{barthel2023descenttensortriangulargeometry} that the forgetful functor from the category of distributive lattices to the category of sets creates limits. 
     \par 
     From this, we conclude that $\textup{Thickid}^f(G^d)$ is isomorphic to the product $\prod\limits_{n \in \mathbb{N}}\textup{Thickid}^f(H_n^d)$ in the category of distributive lattices. Stone duality as in \Cref{Stonedual} then shows that $\textup{Spc}(\tacstab(kG)^d)$ is homeomorphic to the spectral coproduct $\bigsqcup\limits_{n\in\mathbb{N}}\textup{Spc}(\tacstab(kH_n)^d)$. 
    \par 
    Now, we know that $\textup{Spc}(\tacstab(kH_n)^d) = \{\ast\}$. Therefore, the spectrum of dualisable objects in $\tacstab(kG)$ is the spectral coproduct $\bigsqcup\limits_{n\in\mathbb{N}}\{\ast\}$; this is shown in \cite[Example~10.1.5]{Dickmann_Schwartz_Tressl_2019} to be the Stone-\v{C}ech compactification of $\mathbb{N}$ as claimed.
\end{proof}
The above situation is very special; one of the key properties is that the spectrum of the stable category of a cyclic group is a point. As soon as we have a non-trivial, proper thick tensor ideal in each $H_n$, we cannot determine the spectrum of the stable category for the infinite free product in this way, even as a spectral space. 
\par 
We need some preparation in order to show this; for the following definitions see \cite[Section~8]{BENSONCARLSONGEN} and \cite[Section~2]{SteSte}. 
\begin{definition}\label{deftensorgenerat}
    Let $M$ and $X$ be $kG$-modules. We say that $X$ is generated from $M$ in one step if $X$ is a summand of a direct sum of shifts of $M$. We then inductively say that $X$ is generated from $M$ in $n$ steps if there exists a triangle 
    \[N_1 \to N_2 \to X \oplus Z \to\]
    with $N_1$ generated from $M$ in $n-1$ steps, $N_2$ generated from $M$ in one step, and $Z$ arbitrary. 
    \par 
    We say $X$ is tensor generated from $M$ in $n$ steps if $X$ is generated from modules of the form $M\otimes Y$ in $n$ steps, where $Y$ is arbitrary. 
    \par
    Finally, we say a thick tensor ideal $\mathcal{T}$ is strongly generated by $M$ if every module $X \in \mathcal{T}$ is generated from $M$ in at most $n$ steps, for some fixed $n \geq 1$.
\end{definition}
\begin{lemma}\label{tengenmsteps}
    Let $H$ be a finite group and suppose $M$ and $X$ are finitely generated $kH$-modules such that $\xi_M^{\otimes n}\otimes X = 0$ for some $n$. Then $M$ tensor generates $X$ in at most $n$ steps.
\end{lemma}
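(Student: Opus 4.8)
The plan is to induct on $n$, using the triangle \eqref{ximeq} for $M$ as the engine. Recall that $\xi_M: C_M \to k$ fits into a triangle $C_M \xrightarrow{\xi_M} k \to M\otimes M^* \to \Omega^{-1}(C_M)$, and that tensoring a triangle with any object gives a triangle. The base case $n=0$ says $X \cong 0$ (up to stable isomorphism), which is generated from $M$ in one step vacuously (the zero module is a summand of the zero sum of shifts of $M$), so the statement is trivially true; more useful is to regard $n=1$ as the base, where $\xi_M \otimes X = 0$ forces the map $k\otimes X \to M\otimes M^*\otimes X$ to be a split monomorphism, so $X$ is a summand of $M\otimes (M^*\otimes X)$, hence tensor generated from $M$ in one step.

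For the inductive step, suppose $\xi_M^{\otimes n}\otimes X = 0$ and the result is known for $n-1$. Tensor the triangle \eqref{ximeq} with $\xi_M^{\otimes(n-1)}\otimes X$ to obtain a triangle
\[
C_M^{\otimes n}\otimes X \xrightarrow{\ \xi_M^{\otimes n}\otimes X\ } \xi_M^{\otimes(n-1)}\otimes X \longrightarrow M\otimes M^*\otimes \xi_M^{\otimes(n-1)}\otimes X \longrightarrow.
\]
Wait — I need to be careful about what $\xi_M^{\otimes(n-1)}\otimes X$ denotes as an object; write $C = C_M$ and note $\xi_M^{\otimes(n-1)}$ is a morphism $C^{\otimes(n-1)}\to k$ whose cone sits in a triangle $C^{\otimes(n-1)}\otimes X \to X \to Q_{n-1}\otimes X\to$ where $Q_{n-1}$ is generated from $M$ in $n-1$ steps (this itself is an easy sub-induction using that the cone of $\xi_M$ is $M\otimes M^*$ and octahedral axioms). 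The hypothesis $\xi_M^{\otimes n}\otimes X = 0$ means the composite $C^{\otimes n}\otimes X \to C^{\otimes(n-1)}\otimes X \xrightarrow{\xi_M^{\otimes(n-1)}\otimes X}$... this vanishing, combined with the triangle $C\otimes(\,\cdot\,)\to (\,\cdot\,)\to M\otimes M^*\otimes(\,\cdot\,)$ applied to $C^{\otimes(n-1)}\otimes X$, shows that $C^{\otimes(n-1)}\otimes X$ is a summand of $M\otimes M^* \otimes C^{\otimes(n-1)}\otimes X$, i.e. generated from $M$ in one step. Feeding this into the triangle $C^{\otimes(n-1)}\otimes X \to X \to Q_{n-1}\otimes X\to$ exhibits $X$ (in fact $X\oplus$ something, or rotating, with an arbitrary third term permitted by Definition \ref{deftensorgenerat}) as generated from a "one-step" piece and an "$(n-1)$-step" piece, which is exactly the inductive clause in the definition of generated in $n$ steps — and every object involved is of the form $M\otimes(-)$, so this is tensor generation.

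I would organize the write-up around two lemmas: first, that $\xi_M^{\otimes m}\otimes X = 0$ implies $C_M^{\otimes m}\otimes X$ is generated from $M\otimes(-)$ in one step (immediate from \eqref{ximeq} and the splitting criterion $f=0 \Rightarrow$ the next map is a split mono); second, the sub-induction identifying the cone of $\xi_M^{\otimes m}$ with something generated from $M$ in $m$ steps. The main obstacle is purely bookkeeping: matching the recursive shape of "generated in $n$ steps" in Definition \ref{deftensorgenerat} — with its allowance for an arbitrary summand $Z$ — against the triangles produced by iterating \eqref{ximeq}, and making sure the octahedral rearrangements are stated cleanly rather than hand-waved. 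No hard input beyond the formal triangulated-category manipulations and the finite generatedness (so that the objects stay in $\tacstab(kH)^d$) is needed; the finiteness hypothesis on $H$ and on $M,X$ is only there to keep everything inside the rigid subcategory where \eqref{ximeq} and Definition \ref{deftensorgenerat} make sense.
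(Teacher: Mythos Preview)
Your outer induction has a genuine gap. You factor $\xi_M^{\otimes n}\otimes X$ as the composite
\[
C_M^{\otimes n}\otimes X \xrightarrow{\ \xi_M\otimes C_M^{\otimes(n-1)}\otimes X\ } C_M^{\otimes(n-1)}\otimes X \xrightarrow{\ \xi_M^{\otimes(n-1)}\otimes X\ } X,
\]
and from the vanishing of this composite you conclude that $C_M^{\otimes(n-1)}\otimes X$ is a summand of $M\otimes M^*\otimes C_M^{\otimes(n-1)}\otimes X$. But that conclusion would require the \emph{first} map $\xi_M\otimes C_M^{\otimes(n-1)}\otimes X$ to vanish, and the vanishing of a composite says nothing about either factor individually. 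The same error appears in your proposed ``first lemma'': from $\xi_M^{\otimes m}\otimes X=0$ you only get that $X$ (not $C_M^{\otimes m}\otimes X$) is a summand of $\textup{cone}(\xi_M^{\otimes m})\otimes X$, and for $m>1$ this cone is not of the form $M\otimes(-)$.

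The fix is already sitting inside your proposal: your ``sub-induction'' that $\textup{cone}(\xi_M^{\otimes m})$ is tensor generated from $M$ in $m$ steps, via the octahedral axiom applied to the factorisation of $\xi_M^{\otimes m}$, is correct and is exactly what the paper does. Once you know this, the hypothesis $\xi_M^{\otimes n}\otimes X=0$ makes $X$ a summand of $\textup{cone}(\xi_M^{\otimes n})\otimes X$, and you are done in one line. So drop the outer induction entirely and promote the sub-induction to the main argument.
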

\begin{proof}
    This is a generalisation of \cite[Proposition~8.9]{BENSONCARLSONGEN}. If $\xi_M^{\otimes n}\otimes X = 0$ for some $n$, then we know that $X$ is a summand of $\textup{cone}(\xi_M^{\otimes n})\otimes X$. Therefore, it suffices to show that $M$ tensor generates $\textup{cone}(\xi_M^{\otimes n})$ in at most $n$ steps. 
    \par 
    We show this by induction. The case $n = 1$ is clear, since $\textup{cone}(\xi_M) = M \otimes M^*$ by definition. So, assume $n > 1$ and that it holds for all $i < n$. For simplicity, we write $\xi_M: N \to k$. We have that $\xi_M^{\otimes n} = (\xi_M \otimes \textup{id}_{k^{\otimes n-1}})\circ(\textup{id}_{N}\otimes \xi_M^{\otimes n-1})$. Applying the octahedral axiom to the obvious triangles resulting from this relation gives an exact triangle of the form $N \otimes \textup{cone}(\xi_M^{\otimes n-1}) \to \textup{cone}(\xi_M^{\otimes n}) \to \textup{cone}(\xi_M) \to$, and the inductive hypothesis proves the claim. 
\end{proof}
\begin{lemma}\label{loeqy}
    Let $H$ be a finite $p$-group and suppose we have finitely generated $kH$-modules $M$ and $X$ such that $M$ tensor generates $X$ in $n$ steps. Then $M$ generates $X$ in at most $\ell n$ steps, where $\ell$ is the radical length of $kH$.
\end{lemma}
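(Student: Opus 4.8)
The plan is to reduce the statement to two facts: that tensoring $M$ with a single arbitrary $kH$-module costs at most $\ell$ steps, and that the number of steps needed to generate an object behaves subadditively along triangles. Throughout, write $\langle M\rangle_n$ for the class of $kH$-modules generated from $M$ in at most $n$ steps, so that $\langle M\rangle_1$ consists of summands of direct sums of shifts of $M$ and $\langle M\rangle_n$ is formed from $\langle M\rangle_{n-1}$ and $\langle M\rangle_1$ by taking cones of morphisms (up to the spurious summand $Z$ allowed in \Cref{deftensorgenerat}). One checks by induction that each $\langle M\rangle_n$ is closed under shifts, summands and direct sums, and, unwinding the definition, that $\langle M\rangle_n$ is exactly the summand-closure of the $n$-fold extension product $\langle M\rangle_1 \star \cdots \star \langle M\rangle_1$.

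For the first fact I would argue as follows. Since $H$ is a finite $p$-group, $kH$ is a local ring with unique simple module $k$, and $J^{\ell}=0$ where $J=\operatorname{rad}(kH)$; note this holds as an identity of ideals, so $J^{\ell}Y=0$ for \emph{any} $kH$-module $Y$, finitely generated or not. Hence the radical filtration $Y\supseteq JY\supseteq J^2Y\supseteq\cdots\supseteq J^{\ell}Y=0$ has length at most $\ell$, with each layer $J^iY/J^{i+1}Y$ a $kH/J$-module, that is, a direct sum of copies of $k$. Tensoring over $k$ with $M$ is exact, so it produces a filtration of $M\otimes Y$ of length at most $\ell$ whose layers $M\otimes(J^iY/J^{i+1}Y)$ are direct sums of copies of $M$ and hence lie in $\langle M\rangle_1$. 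Assembling this filtration via $\ell$ iterated triangles (at each stage rotating a short exact sequence $0\to(\text{sum of copies of }M)\to F_{i+1}\to F_i\to 0$ into a triangle of the form $\Omega F_i\to(\text{sum of copies of }M)\to F_{i+1}\to F_i$) shows $M\otimes Y\in\langle M\rangle_{\ell}$.

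For the second fact I would use the standard statement that the iterated extension products satisfy $\langle M\rangle_a\star\langle M\rangle_b\subseteq\langle M\rangle_{a+b}$; this is a formal consequence of the octahedral axiom and the associativity of the extension product, and the one-step case is exactly the manipulation already carried out in the proof of \Cref{tengenmsteps}. Concretely this gives: for any morphism $A\to B$ with $A\in\langle M\rangle_a$ and $B\in\langle M\rangle_b$, its cone lies in $\langle M\rangle_{a+b}$ (rotate $A\to B\to\text{cone}\to\Omega^{-1}A$ so that the cone sits in the middle and read off that it lies in $\langle M\rangle_b\star\langle M\rangle_a$). Granting this, the lemma follows by induction on $n$. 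If $M$ tensor generates $X$ in $n$ steps there is a triangle $N_1\to N_2\to X\oplus Z\to\Omega^{-1}N_1$ with $N_1$ tensor generated from $M$ in $n-1$ steps, hence in $\langle M\rangle_{\ell(n-1)}$ by the inductive hypothesis, and $N_2$ generated in one step from modules of the form $M\otimes Y$, hence in $\langle M\rangle_{\ell}$ by the first fact together with the closure properties of $\langle M\rangle_{\ell}$; subadditivity then puts $X\oplus Z$, and therefore $X$, in $\langle M\rangle_{\ell n}$.

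The main obstacle is the second fact: although it is morally clear, making the octahedral bookkeeping precise — keeping track of the extra summands $Z$ and checking that it is the cone, rather than a fibre, that lands in the correct $\langle M\rangle$ — needs some care. An alternative to invoking the extension-product formalism, and one that stays closer to the style of the surrounding lemmas, is to prove the cone estimate directly by induction on the step count $b$ of the target object, peeling off one layer of that object at a time and applying the octahedral axiom at each stage.
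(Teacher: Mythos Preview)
Your proof is correct and follows essentially the same approach as the paper: the paper's proof is a one-line citation to \cite[Lemma~9.1]{BENSONCARLSONGEN} together with the radical-filtration observation that $M\otimes Y\in\langle M\rangle_\ell$, leaving the induction on $n$ and the subadditivity of step-counts along triangles implicit. You have simply spelled out those implicit steps (and your observation that $J^\ell Y=0$ holds for arbitrary $Y$, not just finitely generated ones, is a harmless strengthening).
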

\begin{proof}
    This follows just as \cite[Lemma~9.1]{BENSONCARLSONGEN}. The point is that for any finitely generated $Y$, the radical filtration of $Y$ has length at most $\ell$ and each quotient is isomorphic to a direct sum of copies of $k$. From this, we see that $M \otimes Y$ may be generated from $M$ in at most $\ell$ steps. 
\end{proof}
Given this, the following is essentially a consequence of \cite[Theorem~4.1]{SteSte}, which shows that, under quite general conditions, the only thick tensor ideals to have strong generators are the zero ideal and the full category.
\begin{theorem}\label{NOGENERATE}
    Suppose $H$ is a finite $p$-group such that there exists a non-trivial, proper thick tensor ideal of the stable category. Then for each $n \in \mathbb{N}$ there exists finitely generated $kH$-modules $M$ and $X$ such that $M$ tensor generates $X$ in greater than $n$ steps. 
\end{theorem}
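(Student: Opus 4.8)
The plan is to deduce this from \cite[Theorem~4.1]{SteSte} by using \Cref{loeqy} to translate between tensor generation and ordinary generation. Recall that for a finite group $H$ the category $\tacstab(kH)^c$ of compact objects is exactly the stable category of finitely generated $kH$-modules; it is essentially small and every object in it is dualisable, so \cite[Theorem~4.1]{SteSte} is applicable and tells us that a thick tensor ideal of $\tacstab(kH)^c$ admits a strong generator (in the sense of \Cref{deftensorgenerat}) only if it is the zero ideal or the whole category.

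First I would use the hypothesis to manufacture a suitable finitely generated module. Let $\mathcal{T}$ be the given non-trivial, proper thick tensor ideal. Since $\mathcal{T} \neq 0$ in the stable category, it contains (after passing to a summand, which stays in $\mathcal{T}$ as it is thick) a finitely generated non-projective module $M$. As $\mathcal{T}$ is a thick tensor ideal we get $0 \neq \textup{Thick}_{\otimes}(M) \subseteq \mathcal{T}$, and $\mathcal{T}$ is proper, so $\textup{Thick}_{\otimes}(M)$ is itself a non-trivial proper thick tensor ideal. By \cite[Theorem~4.1]{SteSte} it is not strongly generated; in particular it is not strongly generated by $M$.

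Now fix $n \in \mathbb{N}$ and let $\ell$ be the radical length of $kH$. Because $M$ does not strongly generate $\textup{Thick}_{\otimes}(M)$, there is a finitely generated $X \in \textup{Thick}_{\otimes}(M)$ which is not generated from $M$ in at most $\ell n$ steps. On the other hand $X \in \textup{Thick}_{\otimes}(M)$, and the tensor generation filtration exhausts $\textup{Thick}_{\otimes}(M)$ (this is standard and is how the filtration is set up in \cite[Section~8]{BENSONCARLSONGEN} and \cite[Section~2]{SteSte}), so $M$ tensor generates $X$ in some finite number $m$ of steps. If we had $m \leq n$, then \Cref{loeqy} would give that $M$ generates $X$ in at most $\ell m \leq \ell n$ steps, a contradiction. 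Hence $m > n$, and the pair $(M,X)$ witnesses the claim.

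The only genuine work beyond this bookkeeping is checking that $\tacstab(kH)^c$ satisfies the hypotheses under which \cite[Theorem~4.1]{SteSte} is stated; this, together with the existence of a non-trivial proper thick tensor ideal (equivalently, that $H$ has $p$-rank at least two, so that $\textup{Thick}_{\otimes}(M)$ above is proper and non-trivial), is the place where the assumptions on $H$ are really used. I would also take a moment to note that the argument produces an $X$ genuinely lying in $\textup{Thick}_{\otimes}(M)$, so it establishes the statement under the stronger reading that the minimal number of tensor-generation steps exceeds $n$, not merely that $X$ fails to be generated in $\leq n$ steps.
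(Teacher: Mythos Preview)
Your proposal is correct and follows essentially the same route as the paper: pick $M$ in a non-trivial proper thick tensor ideal, invoke \cite[Theorem~4.1]{SteSte} to see that $\textup{Thick}_{\otimes}(M)$ has no strong generator, and then use \Cref{loeqy} to convert a bound on tensor-generation steps into a bound on ordinary generation steps, yielding the contradiction. Your formulation is slightly more explicit in first choosing the threshold $\ell n$ and then producing $X$, and in verifying that $X$ is indeed tensor generated by $M$ in finitely many steps, but the argument is the same as the paper's.
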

\begin{proof}
    Let $\mathcal{T}$ be a non-trivial proper thick tensor ideal of the stable category of finitely generated $kH$-modules and take $M \in \mathcal{T}$. Note that $\textup{Thick}_{\otimes}(M)$ is itself a non-trivial proper thick tensor ideal. Hence, from \cite[Theorem~4.1]{SteSte}, we know that $\textup{Thick}_{\otimes}(M)$ does not have a strong generator.
    \par 
    Take $n \in \mathbb{N}$ and suppose that for all $X \in \textup{Thick}_{\otimes}(M)$, we have that $X$ can be tensor generated from $M$ in $\leq n$ steps. From \Cref{loeqy} we see that $X$ can be generated from $M$ in $\leq \ell n$ steps; this implies that $M$ is a strong generator of $\textup{Thick}_{\otimes}(M)$, which is a contradiction. This shows that there must exist some $X$ which is tensor generated from $M$ in greater than $n$ steps.
\end{proof}
\begin{theorem}\label{NOSPECFORTREES}
    Let $G = \bigast\limits_{n\in\mathbb{N}}H_n$ where $H_n$ is a finite $p$-group of $p$-rank at least two for each $n \in \mathbb{N}$. Then the natural map $\textup{Thickid}^f(G^d) \to \prod\limits_{n \in \mathbb{N}}\textup{Thickid}^f(H_n^d)$ is not a monomorphism of distributive lattices.
\end{theorem}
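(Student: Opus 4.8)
The strategy is to show the displayed map is not injective; by \Cref{reflectingthings} this is equivalent to it not being a monomorphism. So it suffices to produce two dualisable $kG$-modules $M$ and $X$ with $\textup{Thick}_{\otimes}(M{\downarrow}_{H_n}^G) = \textup{Thick}_{\otimes}(X{\downarrow}_{H_n}^G)$ for every $n \in \mathbb{N}$ but with $X \notin \textup{Thick}_{\otimes}(M)$, so that $\textup{Thick}_{\otimes}(M)$ and $\textup{Thick}_{\otimes}(X)$ are distinct elements of $\textup{Thickid}^f(G^d)$ having the same image in $\prod_{n}\textup{Thickid}^f(H_n^d)$.

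We first build the local pieces. Fix $n \in \mathbb{N}$. Since $H_n$ has $p$-rank at least two, $\textup{Proj}(H^*(H_n,k))$ has positive dimension, so $\tacstab(kH_n)$ admits a non-trivial proper thick tensor ideal (by \cite{bcrthick}). Applying \Cref{NOGENERATE} with parameter $n$ gives finitely generated $kH_n$-modules $M_n$ and $Y_n$ such that $M_n$ tensor generates $Y_n$ in strictly more than $n$ steps; note $Y_n \in \textup{Thick}_{\otimes}(M_n)$, since anything tensor generated from $M_n$ in finitely many steps lies in $\textup{Thick}_{\otimes}(M_n)$. Put $X_n := M_n \oplus Y_n$. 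Then $\textup{Thick}_{\otimes}(X_n) = \textup{Thick}_{\otimes}(M_n)$, while $M_n$ still tensor generates $X_n$ in more than $n$ steps, because $Y_n$ is a summand of $X_n$ and tensor generation of a module passes to its summands (this is the role of the arbitrary term $Z$ in \Cref{deftensorgenerat}). Using the gluing construction of \cite[Section~7]{MSstabcat} exactly as in \Cref{oneusrj}, choose $kG$-modules $M$ and $X$ with $M{\downarrow}_{H_n}^G \cong M_n$ and $X{\downarrow}_{H_n}^G \cong X_n$ for all $n$; both are dualisable by \Cref{riginpro}, as all the restrictions are finitely generated modules over finite groups. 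By construction $\textup{Thick}_{\otimes}(M{\downarrow}_{H_n}^G) = \textup{Thick}_{\otimes}(M_n) = \textup{Thick}_{\otimes}(X_n) = \textup{Thick}_{\otimes}(X{\downarrow}_{H_n}^G)$ for every $n$, so the two thick tensor ideals have the same image.

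It remains to check $X \notin \textup{Thick}_{\otimes}(M)$. Suppose to the contrary that $X \in \textup{Thick}_{\otimes}(M)$. By \eqref{52} there is an integer $N \geq 0$ with $\xi_M^{\otimes N} \otimes X = 0$. Restriction is a tensor-triangulated functor preserving duals, so (just as in the proof of \Cref{constantc}) the coevaluation triangle \eqref{ximeq} for $M$ restricts to the one for $M_n$, giving $\xi_M{\downarrow}_{H_n}^G = \xi_{M_n}$; applying $-{\downarrow}_{H_n}^G$ to $\xi_M^{\otimes N}\otimes X = 0$ then yields $\xi_{M_n}^{\otimes N}\otimes X_n = 0$ for every $n$. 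Since $M_n$ and $X_n$ are finitely generated, \Cref{tengenmsteps} shows that $M_n$ tensor generates $X_n$ in at most $N$ steps, for every $n$. Taking $n = N$ contradicts the defining property of $X_N$. Hence $X \notin \textup{Thick}_{\otimes}(M)$, so $\textup{Thick}_{\otimes}(M)$ and $\textup{Thick}_{\otimes}(X)$ are distinct with the same image, and the map is not injective, hence not a monomorphism of distributive lattices by \Cref{reflectingthings}.

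The main obstacle is the quantifier interplay packaged in the last paragraph: the failure of the map amounts to the fact that a module lying, after restriction to each $H_n$, in the thick tensor ideal generated by $M$ need not lie in $\textup{Thick}_{\otimes}(M)$ globally, because a global membership would furnish a single integer $N$ bounding the number of tensor-generation steps uniformly over all $H_n$, whereas the chosen $X_n$ force this number to grow without bound. The substantive inputs, namely that proper thick tensor ideals over finite $p$-groups have no strong generator and that $\xi$ detects membership via bounded tensor generation, are already available as \Cref{NOGENERATE} and \Cref{tengenmsteps} together with \cite[Corollary~2.11]{BalmSurj}; the remaining work is the diagonal assembly and the bookkeeping that restriction carries the coevaluation triangle of $M$ to that of $M_n$.
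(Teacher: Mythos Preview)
Your proof is correct and in fact more streamlined than the paper's. Both arguments begin the same way: invoke \Cref{NOGENERATE} to produce local modules $M_n$ and $Y_n$ over $H_n$ with $Y_n \in \textup{Thick}_{\otimes}(M_n)$ but requiring more than $n$ tensor-generation steps, glue via \cite[Section~7]{MSstabcat}, and use the $\xi_M$ criterion \eqref{52} together with \Cref{tengenmsteps} to show the glued $X$ does not lie in $\textup{Thick}_{\otimes}(M)$. The divergence is in how one forces the \emph{images} in $\prod_n \textup{Thickid}^f(H_n^d)$ to coincide. You handle this up front by setting $X_n = M_n \oplus Y_n$, so that $\textup{Thick}_{\otimes}(X_n) = \textup{Thick}_{\otimes}(M_n)$ on the nose, while the summand observation from \Cref{deftensorgenerat} ensures the generation-step lower bound is inherited by $X_n$. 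The paper instead keeps $X_n = Y_n$, where one may have $\textup{Thick}_{\otimes}(Y_n) \subsetneq \textup{Thick}_{\otimes}(M_n)$, and must therefore perform a second construction: it passes to the pair $\textup{cone}(\xi_M \otimes X)$ and $X \oplus C_M \otimes X$, and uses several further manipulations with \cite[Propositions~2.9--2.10]{BalmSurj} to show these two objects have equal restricted thick ideals but distinct global ones. Your direct-sum trick eliminates this entire second half; the paper's route, on the other hand, exhibits an explicit pair of objects built functorially from $M$ and $X$ (without modifying the local choices), which may be of independent interest but is not needed for the bare statement.
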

\begin{proof}
    First note that for a finite group of $p$-rank at least two, we know from Benson, Carlson, and Rickard \cite{bcrthick} that there exists a non-trivial proper thick tensor ideal of the stable category. For each $n$, we let $M_n$ and $X_n$ be $kH_n$ modules as in \Cref{NOGENERATE}, so $X_n \in \textup{Thick}_{\otimes}(M_n)$ but $M_n$ tensor generates $X_n$ in greater than $n$ steps. By \cite[Section~7]{MSstabcat}, we may find $kG$-modules $M$ and $X$ such that $M{\downarrow}_{H_n}^G \cong M_n$ and $X{\downarrow}_{H_n}^G \cong X_n$ for each $n$. We see from \Cref{riginpro} that $M$ and $X$ are both dualisable. 
    \par 
    We know from construction that $X{\downarrow}_{H_n}^G \in \textup{Thick}_{\otimes}(M{\downarrow}_{H_n}^G)$ for each $n$, and we claim that $X \not\in\textup{Thick}_{\otimes}(M)$. By Balmer's result \eqref{52}, it suffices to show that $\xi_M^{\otimes n}\otimes X \neq 0$ for any $n$. Suppose not, and so for some $d > 0$ we have $\xi_M^{\otimes d}\otimes X = 0$. Choose any $l > d$ and we see that $0=(\xi_M^{\otimes d}\otimes X){\downarrow}_{H_l}^G = \xi^{\otimes d}_{M_l}\otimes X_l$. This implies from \Cref{tengenmsteps} that $M_l$ tensor generates $X_l$ in at most $d$ steps; however since $l > d$ this contradicts our choice of $M_l$ and $X_l$. 
    \par 
    Consider the triangle 
    \begin{equation}\label{ximeq2}\begin{tikzcd}
     C_M\otimes X \arrow{r}{f} & X \arrow{r} & \textup{cone}(f) \arrow{r} & \Omega^{-1}(C_M\otimes X)\end{tikzcd}
\end{equation}
    where we have written $f = \xi_M\otimes X$ for notation ease. Note that $\textup{cone}(f) = M \otimes M^* \otimes X$.
    \par 
    Now, take some $H_n$ and consider the restriction of the triangle to $H_n$. Since $X{\downarrow}_{H_n}^G \in \textup{Thick}_{\otimes}(M{\downarrow}_{H_n}^G)$, we know from \eqref{52} that $(f{\downarrow}_{H_n}^G)^{\otimes m} = (\xi_{M{\downarrow}_{H_n}^G} \otimes X{\downarrow}_{H_n}^G)^{\otimes m} = 0$ for some $m \geq 1$. This implies that the triangle splits, and so $\textup{Thick}_{\otimes}(\textup{cone}((f{\downarrow}_{H_n}^G)^{\otimes m})) = \textup{Thick}_{\otimes}((X{\downarrow}_{H_n}^G)^{\otimes m} \oplus ((C_M \otimes M){\downarrow}_{H_n}^G)^{\otimes m})$. 
    \par 
    Note that $(f{\downarrow}_{H_n}^G) \otimes \textup{cone}((f{\downarrow}_{H_n}^G)) = 0$ because $\textup{cone}((f{\downarrow}_{H_n}^G)) = (M^*\otimes M\otimes X){\downarrow}_{H_n}^G$ and $\xi_{M{\downarrow}_{H_n}^G} \otimes M{\downarrow}_{H_n}^G = 0$ by the unit-counit relation, see for example the proof of \cite[Corollary~2.11]{BalmSurj}. It then follows from \cite[Proposition~2.10]{BalmSurj} that $\textup{Thick}_{\otimes}(\textup{cone}(f{\downarrow}_{H_n}^G)) = \textup{Thick}_{\otimes}(\textup{cone}((f{\downarrow}_{H_n}^G)^{\otimes m}))$. Furthermore, since $C_M$ and $X$ are dualisable, we know that $\textup{Thick}_{\otimes}((X{\downarrow}_{H_n}^G)^{\otimes m}\oplus ((C_M \otimes X){\downarrow}_{H_n}^G)^{\otimes m}) = \textup{Thick}_{\otimes}((X\oplus C_M \otimes X){\downarrow}_{H_n}^G)$.
    \par 
    In particular, we have $\textup{Thick}_{\otimes}(\textup{cone}(f){\downarrow}_{H_n}^G) = \textup{Thick}_{\otimes}((X \oplus C_M \otimes X){\downarrow}_{H_n}^G)$ for each $n \in \mathbb{N}$. Note that we have used the fact that $\textup{cone}(f{\downarrow}_{H_n}^G) = \textup{cone}(f){\downarrow}_{H_n}^G$.
    \par 
    We claim that $\textup{Thick}_{\otimes}(\textup{cone}(f)) \neq \textup{Thick}_{\otimes}(X \oplus C_M \otimes X)$. More specifically, we show that $X \notin\textup{Thick}_{\otimes}(\textup{cone}(f))$. 
    \par 
   The fact that $f \otimes \textup{cone}(f) = 0$ implies that $\textup{Thick}_{\otimes}(\textup{cone}(f)) \subseteq \{Z\ |\ f^{\otimes m}\otimes Z = 0 \textup{ for some } m \geq 1\}$, noting that the latter subcategory is a thick tensor ideal \cite[Proposition~2.9]{BalmSurj}. So, if $X \in \textup{Thick}_{\otimes}(\textup{cone}(f))$, then $f^{\otimes m}\otimes X = 0$ for some $m \geq 1$. However, recall that $f = \xi_M \otimes X$ and so this implies by \eqref{52} that $X^{\otimes m + 1} \in \textup{Thick}_{\otimes}(M)$. Since $X$ is dualisable, this implies that $X \in \textup{Thick}_{\otimes}(M)$, which is a contradiction.
    \par 
    Altogether, this shows that the map is not injective in the category of sets. We conclude from \Cref{reflectingthings} that the map is not a monomorphism of distributive lattices. 
\end{proof}
\begin{remark}
    Under Stone duality, the map of \Cref{NOSPECFORTREES} corresponds to a map from the spectral coproduct $\bigsqcup\limits_{n\in\mathbb{N}}\textup{Spc}(\tacstab(kH_n)^d) \to \textup{Spc}(\tacstab(kG)^d)$, which is induced by restriction. \Cref{NOSPECFORTREES} shows that this map is not an epimorphism. Note that the restriction functors are jointly conservative, that is $M \cong 0$ if and only if $M{\downarrow}_{H_n}^G \cong 0$ for each $H_n$. We see that the rigidly-compactly generated hypothesis in \cite[Theorem~1.3]{surjinttgeom} is strictly necessary, even accounting for the possibility of taking spectral coproducts rather than topological coproducts.
\end{remark}
We can use \Cref{specfortrees} to give an example of a tensor-triangulated category which is not stratified by the Balmer spectrum of dualisable objects, in the sense of \cite[Definition~10.30]{barthel2023cosupport}. For the rest of the section, we let $G=  \bigast\limits_{n \in \mathbb{N}} H_n$ where $H_n = C_p$ for each $n$, for simplicity.
\par 
We briefly recall what it means to be stratified. Let $\mathscr{T}$ be a perfectly generated, non-closed tensor-triangulated category and assume that $\mathscr{T}$ is weakly noetherian. It is shown in \cite[Definition~10.24]{barthel2023cosupport} that there exists a Balmer-Favi type support, which takes values in the spectrum of dualisable objects. We then say that $\mathscr{T}$ is stratified if this support induces a bijection between the localising $\mathscr{T}^d$-submodules of $\mathscr{T}$ and subsets of $\textup{Spc}(\mathscr{T}^d)$. 
\par 
Note that a localising $\mathscr{T}^d$-submodule is a localising subcategory closed under tensoring with all dualisable objects. We first show that the category of interest fits into this setup. 
\begin{lemma}
    The stable category is a perfectly generated, non-closed tensor-triangulated category with $\textup{Spc}(\tacstab(kG)^d)$ weakly noetherian. Furthermore, every localising $\tacstab(kG)^d$-submodule is in fact a localising tensor ideal. 
\end{lemma}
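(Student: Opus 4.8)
The plan is to verify the three assertions in order, using results already established in the excerpt together with standard facts about the stable module category.

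\textbf{Perfectly generated and non-closed.} First I would note that $\tacstab(kG)$ is compactly generated by \Cref{comkin}, hence perfectly generated; being a closed tensor-triangulated category (as recalled in Section~3), it has an internal hom, so it is a candidate for the framework of \cite[Section~10]{barthel2023cosupport}. The word ``non-closed'' here is being used loosely to mean ``not rigidly-compactly generated'', i.e.\ the compact and dualisable objects need not agree; this follows since, by \Cref{produccomp}, the tensor unit $k$ is not compact (a compact object restricts nontrivially to only finitely many $H_n$, but $k{\downarrow}_{H_n}^G \cong k \neq 0$ for all $n$), so $k$ is dualisable but not compact.

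\textbf{Weakly noetherian spectrum.} By \Cref{specfortrees} (applied with $\mathcal{H} = \{C_p\}$), $\textup{Spc}(\tacstab(kG)^d)$ is homeomorphic to $\beta\mathbb{N}$. I would then recall the definition of weakly noetherian from \cite{barthel2023cosupport}: every point must be \emph{weakly visible}, meaning it is the intersection of a Thomason-closed set and the complement of a Thomason-closed set. In $\beta\mathbb{N}$, equipped with the Balmer-spectrum topology, the Thomason subsets correspond under Stone duality to the ideals of the distributive lattice $\textup{Thickid}^f(G^d) \cong \prod_{n}\textup{Thickid}^f(C_p^d) \cong \prod_n\{0,1\}$, i.e.\ to subsets of $\mathbb{N}$; the closed points $\{n\}$ for $n \in \mathbb{N}$ are themselves Thomason-closed (they are clopen), so they are trivially weakly visible, and the remaining points of $\beta\mathbb{N} \setminus \mathbb{N}$ can be written as the intersection of the whole space (Thomason) with the complement of a suitable Thomason set; alternatively, I would invoke the general principle (see \Cref{remarkprod} / \Cref{NOSPEC22} in the paper) that an infinite product of tensor-triangular fields has weakly noetherian spectrum. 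The main work is matching conventions: I expect the cleanest route is to observe directly that $\beta\mathbb{N}$ with this topology has a basis of clopen quasi-compact sets indexed by finite and cofinite subsets, and each singleton and its ``complementary'' limit points are weakly visible.

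\textbf{Submodules are ideals.} Finally, for the claim that every localising $\tacstab(kG)^d$-submodule is automatically a localising tensor ideal: since $G = \bigast_n C_p$ has every finite subgroup conjugate into some $C_p = H_n$, and $k{\uparrow}_{H_n}^G$ is dualisable by \Cref{compinri}, a localising subcategory closed under $-\otimes D$ for all dualisable $D$ is in particular closed under $-\otimes k{\uparrow}_{H_n}^G$ for all $n$. From the short exact sequence $0 \to P \to \bigoplus_n k{\uparrow}_{H_n}^G \to k \to 0$ with $P$ projective (used in the proof of \Cref{produccomp}), one has a stable isomorphism $X \cong \bigoplus_n X{\downarrow}_{H_n}^G{\uparrow}_{H_n}^G \cong \bigoplus_n (X \otimes k{\uparrow}_{H_n}^G)$ for every $kG$-module $X$; hence for any $Y$ in the submodule, $X \otimes Y \cong \bigoplus_n (X \otimes Y \otimes k{\uparrow}_{H_n}^G)$, and each summand $Y \otimes (X{\downarrow}_{H_n}^G{\uparrow}_{H_n}^G) \cong (Y \otimes k{\uparrow}_{H_n}^G) \otimes X{\downarrow}_{H_n}^G$ — here I would be careful and instead argue that $Y \otimes k{\uparrow}_{H_n}^G \cong (Y{\downarrow}_{H_n}^G){\uparrow}_{H_n}^G$ lies in the submodule and then, after restricting to $H_n$ where $k$ generates, deduce membership of $X \otimes Y$ using \Cref{detectedelsub} exactly as in the proof of \Cref{genbyobj}. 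The main obstacle is this last point: one must promote ``closed under tensoring with the specific dualisable generators $k{\uparrow}_{H_n}^G$'' to ``tensor ideal'', and the tool for that is the detection/reconstruction machinery of \Cref{detectedelsub} together with the fact that over $C_p$ every localising subcategory closed under $-\otimes k$ is everything, so restriction-induction of submodule objects stays in the submodule. I expect this reduction step to be the technical heart of the lemma.
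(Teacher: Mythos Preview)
Your overall strategy is correct, but you overcomplicate Parts 2 and 3 and miss the one-line arguments the paper uses.

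For weak noetherianness, the paper simply observes that $\beta\mathbb{N}$ is Hausdorff, hence $T_1$, and then cites the general fact (\cite[Remark~2.4]{BHSstrat}) that every $T_1$ spectral space is weakly noetherian. Your direct verification that each point is weakly visible can be made to work (for $p\in\beta\mathbb{N}$ take $V=\beta\mathbb{N}$ and $W=\bigcup\{U\text{ clopen}:p\notin U\}$, a union of Thomason sets), but it is unnecessary once you remember the $T_1$ criterion; your sketch as written is also vague at exactly the point where the non-principal ultrafilter case needs care.

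For the submodule statement, the paper's argument is much shorter than yours: by \Cref{compinri} every compact object is dualisable, so a localising $\tacstab(kG)^d$-submodule $\mathcal{L}$ is closed under tensoring with every compact. Then for any $L\in\mathcal{L}$ the subcategory $\{X:L\otimes X\in\mathcal{L}\}$ is localising and contains all compacts, hence equals $\tacstab(kG)$. Your route via the tree sequence, Frobenius reciprocity, and the detection machinery of \Cref{detectedelsub} does go through (using that over $C_p$ one has $\loc(W)=\locm(W)$ because $k$ generates), but it is far more work than needed and you yourself flag it as ``the technical heart''---in fact it is a triviality once you notice that the compact generators are already dualisable, a fact you even quoted in Part~1.
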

\begin{proof}
    We have described the spectrum of dualisable objects in \Cref{specfortrees}. To see that this is weakly noetherian, first we recall from \cite[Remark~2.4]{BHSstrat} that every $T_1$ spectral space is weakly noetherian. It remains to note that the Stone-\v{C}ech compactification is, by definition, Hausdorff and hence $T_1$.
    \par 
    For the statement about localising $\tacstab(kG)^d$-submodules, it suffices to recall from \Cref{compinri} that each compact object is dualisable and so every localising $\tacstab(kG)^d$-submodule is closed under tensoring with compacts, and hence closed under tensoring with any module. 
\end{proof}
\begin{lemma}\label{lococr}
    The set of localising tensor ideals of $\tacstab(kG)$ is in one to one correspondence with subsets of $\mathbb{N}$.
\end{lemma}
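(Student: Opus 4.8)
The plan is to reduce everything to the classification of localising tensor ideals already established, namely \Cref{corlomain} together with the support-theoretic reformulation in \Cref{newsix}, and then observe that when $G = \bigast_{n\in\mathbb{N}}C_p$ the ``local'' data at each $H_n = C_p$ is as coarse as possible.

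First I would recall that, by \Cref{corlomain}, a localising tensor ideal $\mathcal{L}$ of $\tacstab(kG)$ is determined by the tuple $(\mathcal{L}{\downarrow}_{H_n}^G)_n$ of localising tensor ideals of the $\tacstab(kH_n) = \tacstab(kC_p)$; more precisely the restriction map $\loc(G) \to \lim_{E\in\mathcal{A}_{\mathcal{E}}(G)}\loc(E)$ is a bijection, and since every nontrivial finite subgroup of $G$ is conjugate to one of the $H_n = C_p$ (the edge stabilisers in the tree being trivial), the indexing category $\mathcal{A}_{\mathcal{E}}(G)$ contributes nothing beyond the groups $H_n$ themselves and the trivial group — there are no inclusions or conjugations between distinct $H_n$'s, so the limit is just the product $\prod_{n\in\mathbb{N}}\loc(H_n)$. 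Next I would use that for $H_n = C_p$ the stable module category $\tacstab(kC_p)$ is a tensor-triangular field: its only localising tensor ideals are $0$ and the whole category, so $\loc(H_n)$ is a two-element set. Hence $\loc(G) \cong \prod_{n\in\mathbb{N}}\{0,\ast\}$, which is exactly the power set of $\mathbb{N}$; concretely, a subset $S\subseteq\mathbb{N}$ corresponds to the localising tensor ideal generated by $\{k{\uparrow}_{H_n}^G : n\in S\}$, equivalently the subcategory of modules $M$ with $M{\downarrow}_{H_n}^G \cong 0$ for all $n\notin S$.

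Alternatively, and perhaps more cleanly, one can argue via support: by \Cref{newsix} the map $\mathcal{L}\mapsto \textup{Supp}_G(\mathcal{L})$ is a bijection between localising tensor ideals and (support-closed) subsets of $\bigcup_{E}\rho_E^*(\textup{Proj}(H^*(E,k)))$, and for $E = C_p$ one has $\textup{Proj}(H^*(C_p,k)) = \{\ast\}$ a single point, so the ambient support space is just $\mathbb{N}$ (a discrete collection of points, one per $H_n$) and every subset is support-closed since the space carries no nontrivial specialisation. Either route gives the claimed bijection with subsets of $\mathbb{N}$.

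The only point requiring a little care — the ``main obstacle'', such as it is — is justifying that the inverse limit over $\mathcal{A}_{\mathcal{E}}(G)$ collapses to a plain product, i.e.\ that there are no morphisms in $\mathcal{A}_{\mathcal{E}}(G)$ linking the copies of $C_p$ sitting inside distinct free factors and thus no compatibility conditions to impose. This is where one uses the structure of the Bass--Serre tree for the free product: distinct conjugates of the $H_n$ intersect trivially, so the only elementary abelian $p$-subgroups are the trivial group and the (conjugates of the) $C_p \le H_n$, and conjugation/inclusion never carries one $H_n$ into another. Granting this, the computation is immediate and the lemma follows.
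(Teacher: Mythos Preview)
Your proposal is correct and your second route (via support and \Cref{newsix}) is essentially the paper's own proof: the paper simply cites the classification theorem from the companion paper to identify the localising tensor ideals with subsets of $\bigsqcup_{n\in\mathbb{N}}\{\ast\}\cong\mathbb{N}$, after noting that $\textup{Spc}(\tacstab(kC_p)^d)$ is a single point. Your first route via \Cref{corlomain} is a mild repackaging of the same classification, and your discussion of why the limit over $\mathcal{A}_{\mathcal{E}}(G)$ collapses to a product is a correct unpacking of a step the paper leaves implicit in its citation.
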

\begin{proof}
First, we recall that for a cyclic group $H_n$, the spectrum of $\tacstab(kH_n)^d$ is a single point. We know from \cite[Theorem~5.10]{kendall2024costratificationstablecategoriesinfinite} that the set of localising tensor ideals is in one to one correspondence with subsets of $\bigsqcup\limits_{n\in\mathbb{N}}\{\ast\}$, which we may view as $\mathbb{N}$ with the discrete topology.  
\end{proof}
\begin{theorem}\label{NOSTRAT}
    The category $\tacstab(kG)$ is not stratified by its spectrum of dualisable objects. 
\end{theorem}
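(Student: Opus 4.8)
The plan is to obtain a contradiction from a cardinality count. Stratification of $\tacstab(kG)$ by $\textup{Spc}(\tacstab(kG)^d)$, in the sense of \cite[Definition~10.30]{barthel2023cosupport}, would mean that the Balmer--Favi support sets up a bijection between the localising $\tacstab(kG)^d$-submodules of $\tacstab(kG)$ and \emph{all} the subsets of $\textup{Spc}(\tacstab(kG)^d)$. We have already assembled, in the preceding lemmas, everything needed to see that these two collections cannot be in bijection.

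Concretely, I would proceed as follows. First, the lemma preceding \Cref{lococr} places us in the setting in which stratification is defined (perfectly generated, non-closed, with weakly noetherian spectrum of dualisable objects) and identifies the localising $\tacstab(kG)^d$-submodules with the localising tensor ideals; by \Cref{lococr} the latter are in bijection with the subsets of $\mathbb{N}$, so there are precisely $2^{\aleph_0}$ of them. Secondly, by \Cref{specfortrees} (applied with every $H_n = C_p$) the spectrum $\textup{Spc}(\tacstab(kG)^d)$ is homeomorphic to $\beta\mathbb{N}$, the Stone--\v{C}ech compactification of $\mathbb{N}$; since $\mathbb{N}$ admits $2^{\aleph_0}$ ultrafilters we have $|\beta\mathbb{N}| \geq 2^{\aleph_0}$ (in fact $|\beta\mathbb{N}| = 2^{2^{\aleph_0}}$), so the power set of $\textup{Spc}(\tacstab(kG)^d)$ has cardinality at least $2^{2^{\aleph_0}}$. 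As $2^{\aleph_0} < 2^{2^{\aleph_0}}$ by Cantor's theorem, the support function cannot be a bijection, and hence $\tacstab(kG)$ is not stratified by its spectrum of dualisable objects.

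There is little room for an obstacle here: the only point requiring attention is to be sure that the notion of stratification being invoked genuinely requires the support to surject onto all subsets of the spectrum, and not merely onto those subsets that happen to be realised as supports — this is precisely where the surplus of subsets of $\beta\mathbb{N}$ bites. For intuition it is worth recording that the failure is already visible pointwise: the supports of the known localising tensor ideals are controlled by subsets of the dense discrete copy $\mathbb{N}\subseteq\beta\mathbb{N}$, so no localising submodule can have support equal to a singleton at a non-principal ultrafilter, and these are exactly the points the cardinality count detects. The same reasoning applies, more generally, to any infinite product of tensor-triangular fields.
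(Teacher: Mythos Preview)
Your proposal is correct and follows essentially the same route as the paper's own proof: both argue by a cardinality mismatch, using \Cref{lococr} to count the localising tensor ideals as $2^{\aleph_0}$ and \Cref{specfortrees} to identify the spectrum with $\beta\mathbb{N}$, whose power set has cardinality $2^{2^{\aleph_0}}$. Your write-up is slightly more detailed (invoking Cantor's theorem explicitly and adding the pointwise intuition about non-principal ultrafilters), but the argument is the same.
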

\begin{proof}
To be stratified means that the Balmer-Favi support gives a bijection from the localising tensor ideals of $\tacstab(kG)$ to subsets of $\textup{Spc}(\tacstab(kG)^d)$. However, we see from \Cref{lococr} that the set of localising tensor ideals has cardinality $2^{\aleph_0}$, whereas from \Cref{specfortrees} we know that the spectrum is the Stone-\v{C}ech compactification of $\mathbb{N}$, which has cardinality $2^{2^{\aleph_0}}$. Therefore, no such bijection can exist. 
\end{proof}
We can generalise this as follows. 
\begin{theorem}\label{NOSPEC22}
    Let $\mathcal{T} = \prod\limits_{n\in\mathbb{N}}\mathcal{T}_n$, where each $\mathcal{T}_n$ is a tensor-triangular field. Then $\textup{Spc}(\mathcal{T}^d)$ is homeomorphic to the Stone-\v{C}ech compactification of $\mathbb{N}$, and $\mathcal{T}$ is not stratified by its spectrum of dualisable objects. 
\end{theorem}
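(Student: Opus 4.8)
The plan is to run the argument of \Cref{specfortrees} in the abstract. Write $\mathcal{T}=\prod_{n\in\mathbb{N}}\mathcal{T}_n$ with the componentwise tensor-triangulated structure; then internal hom, the coevaluation triangle \eqref{ximeq}, and dualisability are all computed componentwise, so $\mathcal{T}^d=\prod_{n\in\mathbb{N}}\mathcal{T}_n^d$ and the projections $\mathcal{T}\to\mathcal{T}_n$ restrict to tensor-triangulated functors $\mathcal{T}^d\to\mathcal{T}_n^d$. The goal is to show that the induced map of distributive lattices $\textup{Thickid}^f(\mathcal{T}^d)\to\prod_{n\in\mathbb{N}}\textup{Thickid}^f(\mathcal{T}_n^d)$ is an isomorphism. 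Since each $\mathcal{T}_n$ is a tensor-triangular field, $\textup{Thickid}^f(\mathcal{T}_n^d)$ is the two-element lattice, so this identifies $\textup{Thickid}^f(\mathcal{T}^d)$ with the power-set lattice of $\mathbb{N}$, and Stone duality (\Cref{Stonedual}) then gives that $\textup{Spc}(\mathcal{T}^d)$ is the spectral coproduct $\bigsqcup_{n\in\mathbb{N}}\textup{Spc}(\mathcal{T}_n^d)=\bigsqcup_{n\in\mathbb{N}}\{\ast\}$, which is the Stone-\v{C}ech compactification $\beta\mathbb{N}$ of $\mathbb{N}$ by \cite[Example~10.1.5]{Dickmann_Schwartz_Tressl_2019}.

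Surjectivity of the map is trivial in this setting: a tuple $(\textup{Thick}_{\otimes}(M_n))_n$ is hit by $\textup{Thick}_{\otimes}(M)$ for $M=(M_n)_n\in\mathcal{T}^d$, with no realisation result such as \cite[Section~7]{MSstabcat} needed. For injectivity, suppose $M=(M_n)_n$ and $X=(X_n)_n$ lie in $\mathcal{T}^d$ with $\textup{Thick}_{\otimes}(M_n)=\textup{Thick}_{\otimes}(X_n)$ for all $n$; as the only thick tensor ideals of a tensor-triangular field are $0$ and the whole category, this says exactly that $M_n=0\iff X_n=0$. By the $\otimes$-faithfulness of nonzero objects in a tensor-triangular field \cite{ttrum} --- the same input as in \Cref{ximiszero} --- we get $\xi_{M_n}=0$ whenever $M_n\neq 0$, while $\xi_{M_n}\otimes X_n=0$ automatically when $M_n=0$ since then $X_n=0$; hence $\xi_M\otimes X=0$, and \eqref{52} (that is, \cite[Corollary~2.11]{BalmSurj}) yields $X\in\textup{Thick}_{\otimes}(M)$. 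By symmetry $\textup{Thick}_{\otimes}(M)=\textup{Thick}_{\otimes}(X)$. Note this is cleaner than \Cref{specfortrees}: because $\xi_{M_n}$ vanishes in a single step there is no uniform bound over the factors to control, and hence no finiteness hypothesis on the collection is required. Bijectivity in $\mathbf{Set}$, together with \Cref{reflectingthings} and the fact that the forgetful functor $\mathbf{DLat}\to\mathbf{Set}$ creates limits \cite[Lemma~3.14]{barthel2023descenttensortriangulargeometry}, promotes this to an isomorphism of distributive lattices, completing the identification $\textup{Spc}(\mathcal{T}^d)\cong\beta\mathbb{N}$.

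For the failure of stratification I would argue by cardinality as in \Cref{NOSTRAT}. First one checks $\mathcal{T}$ fits the framework of \cite[Section~10]{barthel2023cosupport}: it is perfectly generated (take the union of the componentwise inclusions of perfect generating sets of the $\mathcal{T}_n$) and $\textup{Spc}(\mathcal{T}^d)=\beta\mathbb{N}$ is weakly noetherian, being Hausdorff and hence $T_1$ (cf.\ \cite[Remark~2.4]{BHSstrat}), so a Balmer--Favi support exists. It then suffices to show there are fewer localising $\mathcal{T}^d$-submodules of $\mathcal{T}$ than subsets of $\beta\mathbb{N}$. For each $n$ the object $e_n$ with $n$-th component $\mathbbm{1}$ and all other components $0$ is dualisable; for a localising $\mathcal{T}^d$-submodule $\mathcal{L}$, the image of $\mathcal{L}$ under the $n$-th projection is a localising $\mathcal{T}_n^d$-submodule of $\mathcal{T}_n$, hence --- since $\mathcal{T}_n$ is rigidly-compactly generated with its compacts equal to its dualisables and generating, so that localising $\mathcal{T}_n^d$-submodules coincide with localising tensor ideals, and the latter are trivial for a tensor-triangular field --- is either $0$ or $\mathcal{T}_n$. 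Setting $S_{\mathcal{L}}=\{n: e_n\otimes\mathcal{L}\neq 0\}$, closure of $\mathcal{L}$ under coproducts and under tensoring with the $e_n$ forces $\mathcal{L}=\{X\in\mathcal{T}: e_n\otimes X=0\text{ for all }n\notin S_{\mathcal{L}}\}$, so $\mathcal{L}\mapsto S_{\mathcal{L}}$ is a bijection onto subsets of $\mathbb{N}$. Thus there are $2^{\aleph_0}$ localising $\mathcal{T}^d$-submodules, whereas $\beta\mathbb{N}$ has cardinality $2^{2^{\aleph_0}}$ and so more than $2^{\aleph_0}$ subsets; no bijection as demanded by \cite[Definition~10.30]{barthel2023cosupport} can exist, and $\mathcal{T}$ is not stratified by $\textup{Spc}(\mathcal{T}^d)$.

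The main obstacle is bookkeeping rather than a genuinely new idea: one must pin down exactly which features of a tensor-triangular field are invoked --- $\otimes$-faithfulness of nonzero objects, triviality of its thick tensor ideals and (via rigid-compact generation) of its localising submodules, and the resulting one-point spectrum --- and then carry out the routine verification that countable products preserve the hypotheses needed both for the \Cref{specfortrees}-style lattice argument and for the stratification setup of \cite[Section~10]{barthel2023cosupport}.
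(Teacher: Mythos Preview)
Your proposal is correct and follows essentially the same route as the paper: both establish bijectivity of $\textup{Thickid}^f(\mathcal{T}^d)\to\prod_n\textup{Thickid}^f(\mathcal{T}_n^d)$ via the $\xi_M$-vanishing argument (using $\otimes$-faithfulness in a tensor-triangular field), apply Stone duality to identify the spectrum with $\beta\mathbb{N}$, and conclude non-stratification by the cardinality mismatch. The only cosmetic difference is that you classify the localising $\mathcal{T}^d$-submodules directly with the idempotents $e_n$, whereas the paper invokes \cite[Lemma~3.17]{kendall2024costratificationstablecategoriesinfinite} for the count of localising tensor ideals.
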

\begin{proof}
    Note that the map $\textup{Thickid}^f(\mathcal{T}^d) \to \prod\limits_{n\in\mathbb{N}}\textup{Thickid}^f(\mathcal{T}_n^d)$ is clearly surjective. To see that it is injective, we can argue just as \Cref{specfortrees}. The key is that if $X\in \mathcal{T}_n$ is a non-zero dualisable object, then $\xi_X = 0$, using that $X$ is $\otimes$-faithful by definition of a tensor-triangular field and that $\xi_X \otimes X = 0$, which is shown in the proof of \cite[Corollary~2.11]{BalmSurj}. If $X \cong 0$ in $\mathcal{T}_n$, then for any $Y \in \textup{Thick}_{\otimes}(X)$, we have that $Y \cong 0$ and so $\xi_X\otimes Y = 0$ in this case. 
    \par 
    Suppose that $X,Y \in \mathcal{T}$ are such that $\textup{Thick}_{\otimes}(X_n) = \textup{Thick}_{\otimes}(Y_n)$, where we write $X_n$ is the image of $X$ in $\mathcal{T}_n$ under the canonical projection map, and similarly for $Y_n$. By the previous paragraph, we conclude that $\xi_{X_n} \otimes Y_n = 0$ for each $n \in \mathbb{N}$, and so $\xi_X \otimes Y = 0$. This implies by \cite[Corollary~2.11]{BalmSurj} that $Y \in \textup{Thick}_{\otimes}(X)$. Reversing the roles of $X$ and $Y$ shows that $\textup{Thick}_{\otimes}(X) = \textup{Thick}_{\otimes}(Y)$. 
    \par We conclude from \Cref{reflectingthings} that the map is an isomorphism, and so there is an isomorphism of spectral spaces $\bigsqcup\limits_{n\in\mathbb{N}}\textup{Spc}(\mathcal{T}_n^d) \to \textup{Spc}(\mathcal{T}^d)$. The spectrum of a tensor-triangular field is a point \cite[Proposition~5.15]{ttrum} and so $\textup{Spc}(\mathcal{T}^d)$ is the spectral coproduct of points. This is the Stone-\v{C}ech compactification of $\mathbb{N}$ as shown in \cite[Example~10.1.5]{Dickmann_Schwartz_Tressl_2019}.
    \par 
    To see that the category is not stratified, first note that since the spectrum of each $\mathcal{T}_n$ is a point, the localising tensor ideals are in one to one correspondence with $\{0,1\}$. We conclude from \cite[Lemma~3.17]{kendall2024costratificationstablecategoriesinfinite} that the set of localising tensor ideals is in one to one correspondence with $\prod\limits_{n\in\mathbb{N}}\{0,1\}$. This is in bijection with the powerset of $\mathbb{N}$ and so has cardinality $2^{\aleph_0}$. An argument on cardinalities as in \Cref{NOSTRAT} shows that the category cannot be stratified.
\end{proof}
\begin{remark}\label{remarkprod}
    In fact, the arguments of \Cref{NOSPECFORTREES} also work more generally. Inspecting the arguments, we see that we need $\mathcal{T} = \prod\limits_{n\in\mathbb{N}}\mathcal{T}_n$, where each $\mathcal{T}_n$ is a rigidly-compactly generated tensor-triangulated category with connected Balmer spectrum whose compact part is strongly generated by the tensor unit and which contains a non-trivial, proper thick tensor ideal; note that these conditions are required in order to be able to apply the results of \cite{SteSte}. Under these conditions, the projection functors $p_n: \mathcal{T} \to \mathcal{T}_n$ form a jointly conservative family of functors which induce a monomorphic but not epimorphic map $\bigsqcup\limits_{n\in\mathbb{N}}\textup{Spc}(\mathcal{T}_n^d) \to \textup{Spc}(\mathcal{T}^d)$.
\end{remark}
\section{$\mathrm{H}_1\mathfrak{F}$ groups of type $\mathrm{FP}_{\infty}$}
Throughout this section we let $G$ be a $\mathrm{H}_1\mathfrak{F}$ group of type $\mathrm{FP}_{\infty}$ and $k$ be a field of characteristic $p > 0$. Recall that a group $G$ is of type $\textup{FP}_{\infty}$ if the trivial module $k$ has a projective resolution consisting of finitely generated projective modules. 
\begin{lemma}
    The tensor unit is compact.
\end{lemma}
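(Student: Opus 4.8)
The plan is to show that the tensor unit $k$ is a compact object of $\tacstab(kG)$ by exhibiting it as a retract (in the stable category) of a finitely built object from the compact generators, or more directly by checking that $\tachom_{kG}(k,-)$ commutes with coproducts. The natural approach exploits the hypothesis that $G$ has type $\textup{FP}_\infty$: the trivial module $k$ admits a projective resolution $\cdots \to P_1 \to P_0 \to k \to 0$ with each $P_i$ finitely generated projective. Since $G$ is $\textup{H}_1\mathfrak{F}$, it has finite virtual cohomological dimension behaviour only in the proper sense, so one cannot in general truncate this resolution; instead, the key is that the stable category is built from a complete resolution / acyclic complex of projectives, and type $\textup{FP}_\infty$ forces the relevant complex to be \emph{degreewise finitely generated}.

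First I would recall from \cite{kendall2024stablemodulecategorymodel} the concrete model for $\tacstab(kG)$, in which objects are represented by (totally acyclic, or suitably complete) complexes and the tensor unit is represented by a complete resolution $\mathbf{t}k$ of $k$. Using type $\textup{FP}_\infty$, this complete resolution can be chosen degreewise finitely generated: splice a finitely generated projective resolution of $k$ with a finitely generated \emph{injective/}coresolution, the latter existing because over an $\textup{H}_1\mathfrak{F}$ group the relevant Gorenstein-injective or cofibrant replacements are available from the model structure. Degreewise finite generation is exactly the condition that makes $\tachom_{kG}(\mathbf{t}k,-)$ commute with coproducts: a morphism from a bounded-below complex of finitely generated projectives into a coproduct $\bigoplus_\alpha M_\alpha$ factors through a finite subcoproduct in each degree, and a standard eventually-stabilising / degreewise argument (as in the proof that perfect complexes are compact) upgrades this to the full homotopy category, hence to the stable category.

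The step I expect to be the main obstacle is verifying that the complete/Tate resolution of $k$ can genuinely be taken degreewise finitely generated in the $\textup{H}_1\mathfrak{F}$ setting — the $\textup{FP}_\infty$ hypothesis handles the projective half cleanly, but the ``coresolution half'' of a complete resolution requires knowing that $k$ has finitely generated cosyzygies in the appropriate exact/model-categorical sense, which is not formal and likely needs an input from \cite{kendall2024stablemodulecategorymodel} about the structure of the stable category for type $\textup{FP}_\infty$ groups (e.g. that $k$ lies in the thick subcategory generated by the compact generators $k{\uparrow}_E^G$, or that the singularity-type category is compactly generated with $k$ compact). Once degreewise finiteness is in hand, the compactness of $k$ follows by the usual argument; alternatively, if the model gives directly that $\tacstab(kG)$ agrees with a compactly generated homotopy category $\kinj{\,kG})$ or similar in which the image of $k$ is obviously compact, I would simply cite that identification and be done.
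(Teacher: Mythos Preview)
The paper's proof is a single citation to \cite[Proposition~5.19]{kendall2024stablemodulecategorymodel}; your closing fallback --- ``simply cite that identification and be done'' --- is precisely what happens, so in that sense your proposal lands in the right place.

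Your direct argument, however, has two issues beyond the gap you already flag. First, a complete resolution of $k$ is unbounded in \emph{both} directions, so the phrase ``bounded-below complex of finitely generated projectives'' is a slip, and it matters: the standard perfect-complex argument for compactness genuinely uses boundedness on one side. A chain map from a doubly infinite complex of finitely generated projectives into an infinite coproduct factors through a finite sub-sum in each individual degree, but there is no reason these finite subsets stabilise across all of $\mathbb{Z}$, so the map need not globally factor, even up to homotopy. Second, the gap you do flag --- that the negative half of the complete resolution can be taken degreewise finitely generated --- is exactly the nontrivial content one has to import from the earlier paper; it does not follow from $\textup{FP}_\infty$ alone without further input about the model structure for $\textup{H}_1\mathfrak{F}$ groups. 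A cleaner direct route (and closer to how such results are typically proved) is to show that complete cohomology $\widehat{H}^*(G,-)\cong \tachom_{kG}(k,\Omega^{-*}(-))$ commutes with filtered colimits via the Mislin colimit description, using that every syzygy of $k$ is again of type $\textup{FP}_\infty$; but again this is the substance packaged into the cited proposition, not something to reprove here.
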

\begin{proof}
    This follows from \cite[Proposition~5.19]{kendall2024stablemodulecategorymodel}.
\end{proof}
Note that this implies that every dualisable object is compact. 
\begin{proposition}\label{fiszero}
    Suppose $f:M \to N$ is a morphism in $\tacstab(kG)$ with $M$ and $N$ dualisable, which is such that $f{\downarrow}_E^G = 0$ for each finite elementary abelian subgroup $E \leq G$. Then $f^{\otimes n} = 0$ for some $n \geq 0$.
\end{proposition}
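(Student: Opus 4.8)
The plan is to show that $f$ is $\otimes$-nilpotent by placing the tensor unit inside the $\otimes$-nilpotence ideal generated by $f$; the two inputs are that $\mathbbm{1}$ is compact (as $G$ has type $\fpinf$) and that the induced modules $k{\uparrow}_E^G$ generate $\tacstab(kG)$.

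First I would check that $f\otimes\textup{id}_{k{\uparrow}_E^G}=0$ for every finite elementary abelian subgroup $E\leq G$. Indeed, the projection formula gives a natural isomorphism $X\otimes k{\uparrow}_E^G\cong (X{\downarrow}_E^G){\uparrow}_E^G$ (the isomorphism used in the proof of \Cref{hop}), and under it $f\otimes\textup{id}_{k{\uparrow}_E^G}$ corresponds to $(f{\downarrow}_E^G){\uparrow}_E^G$, which vanishes by hypothesis.

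Next, set $\mathcal{J}=\{X\in\tacstab(kG)\mid f^{\otimes m}\otimes X=0\text{ for some }m\geq 1\}$. This is a thick tensor ideal of $\tacstab(kG)$: it is closed under shifts, summands, and tensoring with arbitrary objects, and it is closed under triangles as in \cite[Proposition~2.9]{BalmSurj} (concretely, given a triangle $A\to B\to C\to$ with $f^{\otimes a}\otimes A=0$ and $f^{\otimes c}\otimes C=0$, an octahedral diagram chase yields $f^{\otimes(a+c)}\otimes B=0$). By the first step, $k{\uparrow}_E^G\in\mathcal{J}$ for every $E\in\mathcal{E}(G)$. Now $\mathbbm{1}=k$ is compact, and by \Cref{comkin} the modules $k{\uparrow}_E^G$, $E\in\mathcal{E}(G)$, form a set of compact generators of $\tacstab(kG)$; since a compact object of a compactly generated triangulated category lies in the thick subcategory generated by any set of compact generators (as used via \cite[Proposition~1.1.2]{KPLat} in the proof of \Cref{produccomp}), we conclude $\mathbbm{1}\in\textup{Thick}(\{k{\uparrow}_E^G\mid E\in\mathcal{E}(G)\})\subseteq\mathcal{J}$, whence $f^{\otimes n}=f^{\otimes n}\otimes\textup{id}_{\mathbbm{1}}=0$ for some $n$.

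The only step needing care is the closure of $\mathcal{J}$ under triangles: \cite[Proposition~2.9]{BalmSurj} is stated in a rigid setting, so I would either isolate the rigidity-free part of that argument or supply the short octahedral verification indicated above. Everything else is formal, and notably dualisability of $M$ and $N$ enters only through the ambient setup.
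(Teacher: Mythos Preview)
Your proof is correct and follows essentially the same route as the paper: define the $\otimes$-nilpotence ideal $\mathcal{J}=\{X\mid f^{\otimes m}\otimes X=0\text{ for some }m\}$, observe via the projection formula that it contains each $k{\uparrow}_E^G$, invoke \cite[Proposition~2.9]{BalmSurj} to see it is a thick tensor ideal, and then use compactness of $\mathbbm{1}$ together with \Cref{comkin} to place $\mathbbm{1}$ inside it. The paper's version is terser (it leaves the projection-formula step and the passage from ``compact generators'' to ``$\mathbbm{1}\in\thick(\{k{\uparrow}_E^G\})$'' implicit), and your caution about the rigidity hypothesis in \cite[Proposition~2.9]{BalmSurj} is well placed but ultimately unnecessary, since that closure-under-triangles argument is purely formal and does not use duality.
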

\begin{proof}
    This follows from \cite[Theorem~13.3]{bensoninf}. We provide a different proof. First, note that we know from \cite[Proposition~2.9]{BalmSurj} that the full subcategory of $\tacstab(kG)$ consisting of modules $X$ such that $f^{\otimes n}\otimes X = 0$ for some $n \geq 0$ is a thick tensor ideal. By assumption it contains $k{\uparrow}_E^G$ for each elementary abelian subgroup $E \leq G$ and so, by \Cref{comkin}, it must also contain $k$.  
\end{proof}
The key input in determining the Balmer spectrum is a result of Henn \cite{Hennq}, which shows that Quillen Stratification holds for such groups.
\begin{definition}
    A homomorphism $\phi: A \to B$ of $k$-algebras is called a uniform $F$-isomorphism if there exists a natural number $n$ such that if $x$ is in $\textup{ker}(\phi)$ then $x^{p^n} = 0$ and for every $y \in B$ we have $y^{p^n} = \phi(y')$ for some $y' \in A$.
\end{definition}
From now on we let $\mathcal{E}(G)$ be the collection of elementary abelian $p$-subgroups of $G$. For the following, recall the definition of the category $\mathcal{A}_{\mathcal{E}}(G)$ from \Cref{aegcatdef}.
\begin{theorem}\label{thm:henn} \cite[Appendix]{Hennq} 
    Let $G$ be an $\textup{H}_1\mathfrak{F}$ group of type $\textup{FP}_{\infty}$. Then the natural map $H^*(G,k) \to \lim\limits_{E \in \mathcal{A}_{\mathcal{E}}(G)} H^*(E,k)$ is a uniform $F$-isomorphism, there are only finitely many conjugacy classes of finite elementary abelian $p$-subgroups, and $H^*(G,k)$ is a finitely generated algebra over $k$.
\end{theorem}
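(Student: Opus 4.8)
The plan is to reduce Quillen's stratification for $G$ to his theorem for finite groups, by realising $H^*(G,k)$ as the equivariant cohomology of the finite-dimensional contractible complex witnessing $G \in \hf$ and then running the isotropy spectral sequence relative to the (finite) cell stabilisers. First I would record the two roles of the hypotheses. Being $\hf$, the group $G$ acts cellularly on a finite-dimensional contractible CW-complex $X$ with all cell stabilisers finite; since $X$ is contractible, the Borel construction $EG\times_G X$ is homotopy equivalent to $BG$, so there is a natural isomorphism $H^*(G,k)\cong H^*_G(X,k)$. The $\fpinf$ hypothesis should then be used to arrange a genuinely finite situation, namely that $G$ admits a cocompact model for the classifying space for proper actions (equivalently, that $k$ has a finite-type resolution over $kG$ by finitely generated permutation modules on finite-stabiliser sets); this is precisely what fails for the infinite free products of \Cref{freeprodsec}. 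From the cocompact model one reads off at once that $G$ has only finitely many conjugacy classes of finite subgroups, hence only finitely many of finite elementary abelian $p$-subgroups, which is the second assertion.

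Next I would run the isotropy (equivariant cohomology) spectral sequence of the skeletal filtration of $X$,
\[
E_1^{s,t} \;=\; \prod\limits_{\sigma}\, H^t(G_\sigma, k) \;\Longrightarrow\; H^{s+t}_G(X,k) \;=\; H^{s+t}(G,k),
\]
where $\sigma$ runs over a set of representatives of the $G$-orbits of $s$-cells and $G_\sigma$ is the (finite) stabiliser; once $X$ is cocompact and finite-dimensional this $E_1$-page is a \emph{finite} product of cohomology rings of finite groups. I would then compare this spectral sequence, via restriction, with the analogous one computing $\lim\limits_{E \in \mathcal{A}_{\mathcal{E}}(G)} H^*(E,k)$ (or, more crudely, with the constant system whose $E_1$-page is $\prod_\sigma \lim\limits_{E\le G_\sigma}H^*(E,k)$). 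On $E_1$-pages the comparison map is a product of the Quillen maps $H^*(G_\sigma,k)\to \lim\limits_{E\le G_\sigma}H^*(E,k)$ for the finite groups $G_\sigma$, each a uniform $F$-isomorphism by Quillen's theorem for finite groups; because there are only finitely many conjugacy classes of stabilisers, a single exponent works for all of them. One then checks that a uniform $F$-isomorphism on $E_1$ induces one on every later page and on $E_\infty$, and — since $X$ is finite-dimensional, so the filtration on $H^*(G,k)$ has length bounded by $\dim X$ — that a uniform $F$-isomorphism on the associated graded forces one on $H^*(G,k)$ itself, the bound on the number of columns being what keeps the exponent uniform. This yields the first assertion.

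For finite generation of $H^*(G,k)$ I would note that, with the cocompact model in hand, the $E_1$-page above is a finitely generated $k$-algebra (finitely many factors, each $H^*(G_\sigma,k)$ Noetherian by Evens--Venkov), that $H^*(G,k)$ acts on the spectral sequence making $E_\infty$ a finitely generated module over a Noetherian quotient, and that the bounded filtration then forces $H^*(G,k)$ to be Noetherian. Alternatively, one can deduce it directly from the uniform $F$-isomorphism already established: $\lim\limits_{E}H^*(E,k)$ is a finitely generated $k$-algebra (finitely many $E$, each $H^*(E,k)$ a polynomial-times-exterior algebra), and a uniform $F$-isomorphism onto a subalgebra of a Noetherian ring, over which the target is finite, forces the source to be Noetherian; the "uniform'' clause is what makes the nilpotent kernel bounded, so that $H^*(G,k)$ is a finite extension of its image.

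The main obstacle is the first step: extracting from the $\fpinf$ hypothesis the finiteness input used everywhere afterwards — that $G$ has a cocompact model for the classifying space for proper actions, or at least a finite-type resolution of $k$ at chain level, and hence only finitely many conjugacy classes of finite elementary abelian $p$-subgroups. Once that is known, the remainder is bookkeeping with the isotropy spectral sequence, the only delicate part of which is verifying that the process genuinely produces a \emph{uniform} $F$-isomorphism: one must control the Quillen exponents across the finitely many stabiliser types simultaneously and across the bounded range of filtration degrees, and check that these controls survive the passage $E_1 \rightsquigarrow E_\infty \rightsquigarrow H^*(G,k)$.
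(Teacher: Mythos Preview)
The paper does not prove this theorem; it is quoted verbatim as a result of Henn, with only the citation \cite[Appendix]{Hennq} in place of a proof. So there is no argument in the paper to compare against, and your sketch should be read as an attempted reconstruction of Henn's proof rather than of anything the authors wrote.

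As a reconstruction, your outline is broadly the right shape: realise $H^*(G,k)$ as equivariant cohomology of the $\hf$ complex, run the isotropy spectral sequence, and feed in Quillen's theorem for the finite stabilisers. You have also correctly located the genuine difficulty, namely the passage from ``$G$ is $\hf$ of type $\fpinf$'' to the finiteness needed to make the $E_1$-page a \emph{finite} product and to bound the Quillen exponents uniformly. Two cautions on that step. First, ``$\fpinf$ over $k$'' does not by itself hand you a cocompact model for $\underline{E}G$; what one actually uses (and what Henn uses) is that an $\hf$ group of type $\fpinf$ has only finitely many conjugacy classes of elementary abelian $p$-subgroups and that the restriction maps make each $H^*(E,k)$ a finite $H^*(G,k)$-module --- these finiteness statements come from the $\fpinf$ resolution at the chain level, not from upgrading the geometric model. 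Second, your comparison spectral sequence is loosely described: the target $\lim_{E}H^*(E,k)$ is not literally computed by an isotropy spectral sequence on $X$, so the clean way to run the argument is to compare $H^*_G(X,k)$ with $\prod_\sigma H^*(G_\sigma,k)$ first and then invoke Quillen for each $G_\sigma$, rather than setting up two parallel spectral sequences. With those adjustments the remainder of your bookkeeping (uniform $F$-isomorphism through the pages, bounded filtration, Noetherianity) is standard and goes through as you say.
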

As a consequence, we have the following cf. \cite[Section~12]{quill2}.
\begin{corollary}\label{cor:fisohomeo}
    For $G$ an $\textup{H}_1\mathfrak{F}$ group of type $\textup{FP}_{\infty}$ there is a homeomorphism \[\underset{{E \in \mathcal{A}_{\mathcal{E}}(G)}}{\textup{colim}}\textup{Proj}(H^*(E,k)) \to \textup{Proj}(H^*(G,k))\]
\end{corollary}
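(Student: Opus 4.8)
The plan is to route the homeomorphism through the graded $k$-algebra $R := \lim_{E\in\mathcal{A}_{\mathcal{E}}(G)}H^*(E,k)$, using the three assertions of \Cref{thm:henn} separately: the natural map $\rho\colon H^*(G,k)\to R$ is a uniform $F$-isomorphism; there are only finitely many conjugacy classes of finite elementary abelian $p$-subgroups (and since these subgroups are finite, the relevant Hom-sets in $\mathcal{A}_{\mathcal{E}}(G)$ are finite, so $\mathcal{A}_{\mathcal{E}}(G)$ has a finite skeleton and the limit defining $R$ is a \emph{finite} limit); and $H^*(G,k)$ is a finitely generated algebra, so $R$ is Noetherian. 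This is Quillen's argument from \cite[Section~12]{quill2}, adapted to the present hypotheses.

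\emph{Step 1.} A uniform $F$-isomorphism $\phi\colon A\to B$ of graded-commutative $k$-algebras, with exponent $p^n$, induces a homeomorphism $\textup{Proj}(B)\to\textup{Proj}(A)$. Indeed $\ker\phi$ is nil and every homogeneous $y\in B$ has $y^{p^n}\in\textup{im}\,\phi$, so after reduction $\bar A\hookrightarrow\bar B$ is injective and $\bar B$ is integral over $\bar A$ (each $y$ satisfies a monic $T^{p^n}-\phi(a)$); hence $\textup{Spec}(\phi)$ is surjective by lying over, injective because $y^{p^n}\in\mathfrak q$ forces $y\in\mathfrak q$ (so a prime of $B$ is determined by its contraction), and closed because it is integral. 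The same argument restricted to the complements of the irrelevant ideals gives the statement on $\textup{Proj}$. Applying this to $\rho$ yields $\textup{Proj}(\rho)\colon\textup{Proj}(R)\xrightarrow{\ \sim\ }\textup{Proj}(H^*(G,k))$.

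\emph{Step 2.} It remains to identify $\textup{Proj}(R)$ with $\textup{colim}_{E\in\mathcal{A}_{\mathcal{E}}(G)}\textup{Proj}(H^*(E,k))$ compatibly with the restriction maps. Choose representatives $E_1,\dots,E_r$ of the conjugacy classes; then $R$ is the subring of $\prod_i H^*(E_i,k)$ cut out by the relations coming from the morphisms of $\mathcal{A}_{\mathcal{E}}(G)$, so $\bigcap_i\ker(R\to H^*(E_i,k))=0$. Each restriction $R\to H^*(E_i,k)$ is module-finite: $H^*(E_i,k)$ is finite over the image of $H^*(G,k)$ (an Evens–Venkov-type finiteness, using \Cref{thm:henn}), and this image is contained in the image of $R$. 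Hence each $\textup{Proj}(H^*(E_i,k))\to\textup{Proj}(R)$ is finite, so closed, and they are jointly surjective since $\bigcup_iV(\ker(R\to H^*(E_i,k)))=V(0)$. One then shows that two homogeneous primes, of $H^*(E_i,k)$ and $H^*(E_j,k)$, lie over the same point of $\textup{Proj}(R)$ precisely when they are linked by a zig-zag of inclusions and conjugations — the double-coset computation of \cite[Section~12]{quill2}. Since $\bigsqcup_i\textup{Proj}(H^*(E_i,k))$ is a finite coproduct of Noetherian spectral spaces, the induced continuous bijection from the colimit to $\textup{Proj}(R)$ is closed, hence a homeomorphism; composing with $\textup{Proj}(\rho)$ from Step 1 gives the corollary.

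The main obstacle is that Step 2 has essentially no formal content — $\textup{Spec}$ does not convert limits of rings into colimits of spaces in general — so it genuinely reproduces Quillen's theorem, and the two delicate points are the module-finiteness of the maps $R\to H^*(E_i,k)$ and the double-coset description of the fibres of $\textup{Proj}(R)$. Both are carried out in \cite[Section~12]{quill2} for finite groups and transport once \Cref{thm:henn} is in hand; the only role of the $\textup{H}_1\mathfrak F$ and type $\textup{FP}_{\infty}$ hypotheses is to guarantee that Henn's Quillen stratification, \Cref{thm:henn}, applies.
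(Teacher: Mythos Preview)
Your proposal is correct and follows the same route the paper indicates: the paper does not give a proof but simply writes ``cf.\ \cite[Section~12]{quill2}'', i.e.\ Quillen's argument transported via Henn's theorem, which is exactly what you spell out. Your identification of the two genuinely nontrivial points (module-finiteness of $R\to H^*(E_i,k)$ and the double-coset description of the fibres) is accurate, and the paper itself later invokes the first of these in the proof of \Cref{prop2} in the same way you do.
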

\begin{remark}\label{supremark}
    For $G$ an $\textup{H}_1\mathfrak{F}$ group of type $\textup{FP}_{\infty}$, we may define our support to take values in $\textup{Proj}(H^*(G,k))$ by combining the support we defined in \Cref{subsectionofsupport} with the homeomorphism of \Cref{cor:fisohomeo}. We see that under this identification, the support of $M$ is simply the union over all elementary abelian $p$-subgroups $E \leq G$ of $\textup{res}^*_{G,E}(\textup{Supp}_E(M{\downarrow}_E^G))$, where $\textup{res}^*_{G,E}$ is the map induced by the subgroup inclusion $E \leq G$. 
\end{remark}
Mimicking the construction of Carlson's $L_{\zeta}$ modules for finite groups \cite[Section~5.9]{bensonbook} allows us to realise closed subsets of $\textup{Proj}(H^*(G,k))$ as the support of dualisable objects. 
\begin{proposition}\label{carlsonzeta}
    Suppose $\mathcal{V}$ is a closed subset of $\textup{Proj}(H^*(G,k))$. Then $\mathcal{S} = \textup{Supp}_G(M)$ for a dualisable object $M$. 
\end{proposition}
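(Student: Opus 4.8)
The plan is to realise $\mathcal{V}$ as the support of a finite tensor product of Carlson-type modules, reducing everything to the finite-group theory via restriction. By \Cref{thm:henn} the algebra $H^*(G,k)$ is finitely generated, hence Noetherian, so I would write $\mathcal{V} = \{\mathfrak{p} \in \textup{Proj}(H^*(G,k))\ |\ \zeta_1, \dots, \zeta_r \in \mathfrak{p}\}$ for finitely many homogeneous $\zeta_i \in H^{n_i}(G,k)$ of positive degree (taking the $\zeta_i$ to generate the irrelevant ideal if $\mathcal{V} = \emptyset$, and $M = k$ if $\mathcal{V}$ is everything). For each $i$, view $\zeta_i$ as a morphism $\Omega^{n_i}k \to k$ in $\tacstab(kG)$ and let $L_{\zeta_i}$ be its fibre, so that there is a triangle
\[ L_{\zeta_i} \to \Omega^{n_i}k \xrightarrow{\zeta_i} k \to \Omega^{-1}L_{\zeta_i}. \]
Both $k$ and $\Omega^{n_i}k$ are dualisable, and the dualisable objects are closed under cones and retracts, so $L_{\zeta_i}$ is dualisable; hence so is $M := L_{\zeta_1} \otimes \cdots \otimes L_{\zeta_r}$, since the dualisable objects are closed under tensor products. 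This $M$ is the candidate.

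By the tensor product formula \Cref{prop:supportprop}(v), $\textup{Supp}_G(M) = \bigcap_{i=1}^r \textup{Supp}_G(L_{\zeta_i})$, and $\bigcap_i \{\mathfrak{p}\ |\ \zeta_i \in \mathfrak{p}\} = \mathcal{V}$ by construction, so it is enough to prove $\textup{Supp}_G(L_\zeta) = \mathcal{V}(\zeta) := \{\mathfrak{p}\ |\ \zeta \in \mathfrak{p}\}$ for a single homogeneous $\zeta$ of positive degree. Using the description of support from \Cref{supremark}, $\textup{Supp}_G(L_\zeta) = \bigcup_E \textup{res}^*_{G,E}\bigl(\textup{Supp}_E(L_\zeta{\downarrow}_E^G)\bigr)$, the union over the finite elementary abelian $E \leq G$.

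Fix such an $E$. Restriction is tensor-triangulated and commutes with $\Omega$, so applying ${\downarrow}_E^G$ to the defining triangle exhibits $L_\zeta{\downarrow}_E^G$ as the fibre of $\textup{res}_{G,E}(\zeta)\colon \Omega^{n}k \to k$ over $kE$. If $\textup{res}_{G,E}(\zeta) \neq 0$ this is precisely the Carlson module $L_{\textup{res}_{G,E}(\zeta)}$, whose support is the hypersurface $\{\mathfrak{q}\ |\ \textup{res}_{G,E}(\zeta) \in \mathfrak{q}\}$ by the standard finite-group computation (see \cite[Section~5.9]{bensonbook}); if $\textup{res}_{G,E}(\zeta) = 0$ the triangle splits, $L_\zeta{\downarrow}_E^G$ has a shift of $k$ as a summand, and its support is all of $\textup{Proj}(H^*(E,k))$ — in both cases $\textup{Supp}_E(L_\zeta{\downarrow}_E^G) = \{\mathfrak{q}\ |\ \textup{res}_{G,E}(\zeta) \in \mathfrak{q}\}$. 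Since $\zeta \in \textup{res}^*_{G,E}(\mathfrak{q})$ if and only if $\textup{res}_{G,E}(\zeta) \in \mathfrak{q}$, pushing this set forward along $\textup{res}^*_{G,E}$ gives $\mathcal{V}(\zeta) \cap \textup{im}(\textup{res}^*_{G,E})$. Taking the union over all $E$ and invoking \Cref{cor:fisohomeo} — which in particular says $\bigcup_E \textup{im}(\textup{res}^*_{G,E}) = \textup{Proj}(H^*(G,k))$ — yields $\textup{Supp}_G(L_\zeta) = \mathcal{V}(\zeta)$, completing the plan.

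I expect the argument to be mostly bookkeeping once Henn's theorem is available: the two genuine inputs are Noetherianity of $H^*(G,k)$ (to express $\mathcal{V}$ as a finite vanishing locus) and the surjectivity in \Cref{cor:fisohomeo} (to reassemble $\mathcal{V}(\zeta)$ from its restrictions to elementary abelians). The only points needing a little care are the degenerate case $\textup{res}_{G,E}(\zeta) = 0$, handled above, and the verification that $L_\zeta$ and its tensor powers are dualisable, which follows formally since the dualisable objects form a thick $\otimes$-subcategory containing $k$.
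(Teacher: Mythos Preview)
Your proposal is correct and follows essentially the same approach as the paper: construct Carlson modules $L_{\zeta}$ as fibres of $\zeta\colon\Omega^n k\to k$, compute their support by restricting to finite elementary abelian subgroups and invoking the known finite-group result, reassemble via \Cref{cor:fisohomeo}, and then use the tensor product formula together with Noetherianity of $H^*(G,k)$ to handle a general closed set. If anything, you are slightly more careful than the paper in explicitly verifying dualisability of the $L_{\zeta_i}$ and in treating the degenerate case $\textup{res}_{G,E}(\zeta)=0$.
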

\begin{proof}
First, suppose that $\zeta \in H^*(G,k)$ is a homogeneous element of degree $n$ and hence corresponds to a map $\zeta: \Omega^n(k) \to k$. This in turn corresponds to a morphism in $\tacstab(kG)$, and completing this to a triangle gives an exact triangle of the form 
\[\begin{tikzcd}
L_{\zeta} \arrow{r}& \Omega^n(k) \arrow{r}{\zeta} & k \arrow{r} & \Omega^{-1}(L)\end{tikzcd}\]
For any finite subgroup $F \leq G$, restricting this triangle to $F$ is isomorphic to the exact triangle corresponding to $\zeta{\downarrow}_F^G$, i.e we have a stable isomorphism $L_{\zeta}{\downarrow}_F^G \cong L_{\zeta{\downarrow}_F^G}$. It follows from \cite[Lemma~2.6]{BIKstrat} that $\textup{Supp}_F(L_{\zeta{\downarrow}_F^G}) = V(\mathcal{\zeta}{\downarrow}_F^G)$, the set of homogeneous prime ideals containing $\zeta{\downarrow}_F^G$. Therefore, we see that $\textup{Supp}_F(L_{\zeta}{\downarrow}_F^G) = V(\zeta{\downarrow}_F^G)$.
\par 
We claim that this implies that $\textup{Supp}_G(L_{\zeta}) = V(\zeta)$. The key input is \Cref{cor:fisohomeo} since from this we see that every $\mathfrak{p} \in \textup{Proj}(H^*(G,k))$ is equal to $\textup{res}^*_{G,E}(\mathfrak{q})$ for some $E \leq G$ an elementary abelian subgroup and $\mathfrak{q} \in \textup{Proj}(H^*(E,k))$. It follows that $\zeta \in \mathfrak{p}$ if and only if $\zeta{\downarrow}_E^G \in \mathfrak{q}$. Hence, we have the equality $V(\zeta) = \bigcup\limits_{E \in \mathcal{E}(G)}\textup{res}^*_{G,E}(V(\zeta{\downarrow}_E^G))$. By what we have shown above, and \Cref{supremark}, the right hand side is exactly $\textup{Supp}_G(L_{\zeta})$.
\par 
    Now, suppose $\mathcal{V} = V(I)$, the Zariski closure of some homogeneous ideal $I$. Then $I$ is generated by homogeneous elements $\zeta_1,\dots,\zeta_n$, which is a finite set of generators by \Cref{thm:henn}. From the tensor product property of support given in \Cref{prop:supportprop}, and what we have shown above, it follows that $\textup{Supp}_G(L_{\zeta_1} \otimes \dots \otimes L_{\zeta_n}) = \textup{Supp}_G(L_{\zeta_1}) \cap \dots \cap\textup{Supp}_G(L_{\zeta_n}) = \mathcal{V}$ as desired.
\end{proof}
Let $E\leq F \leq G$ be elementary abelian subgroups and consider some $\textup{Thick}_{\otimes}(M) \in \textup{Thickid}^f(F^d)$. We know from \cite[Remark~11.11]{barthel2023descenttensortriangulargeometry} that $\textup{Thick}_{\otimes}(M) \mapsto \textup{Thick}_{\otimes}(M{\downarrow}_E^F)$ is a homomorphism of distributive lattices. Furthermore, conjugation also induces homomorphisms of distributive lattices in the obvious way. Altogether, this means we have a functor $\mathcal{A}_{\mathcal{E}}(G)^{op} \to \mathbf{DLat}$, defined on objects by $E \mapsto\textup{Thickid}^f(E^d)$. 
\par 
As shown in \cite[Lemma~3.14]{barthel2023descenttensortriangulargeometry}, the forgetful functor from the category of distributive lattices to the category of sets creates limits. We will use this implicitly in what follows. 
\par 
Consider the induced map $\textup{Thickid}^f(G^d) \to \prod\limits_{E \in \mathcal{E}(G)}\textup{Thickid}^f(E^d)$. It is not hard to check that this has image in $\lim\limits_{E \in \mathcal{A}_{\mathcal{E}}(G)}\textup{Thickid}^f(E^d)$. That is, we have a map \[\textup{Thickid}^f(G^d) \to \lim\limits_{E \in \mathcal{A}_{\mathcal{E}}(G)}\textup{Thickid}^f(E^d)\]
which is induced by restriction. We are interested in showing this is an isomorphism.
\par 
For the following, recall the notation $\xi_M$ from \eqref{ximeq}, as the morphism which arises from completing the unit of the adjunction $k \to M^*\otimes M$ to a triangle, when $M$ is dualisable.
\begin{proposition}\label{prop1}
    Suppose $G$ is an $\textup{H}_1\mathfrak{F}$ group of type $\textup{FP}_{\infty}$. Then the map $\textup{Thickid}^f(G^d) \to \lim\limits_{E \in \mathcal{A}_{\mathcal{E}}(G)} \textup{Thickid}^f(E^d)$ is injective. 
\end{proposition}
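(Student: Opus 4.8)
The plan is to combine the description of the finite elements of $\textup{Thickid}(\tacstab(kG)^d)$ with Balmer's characterisation \eqref{52} of the thick tensor ideal generated by a dualisable object, controlling the various restrictions uniformly by means of Henn's finiteness statement in \Cref{thm:henn}.

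By \cite[Theorem~3.1.9]{KPLat} every finite element of $\textup{Thickid}(\tacstab(kG)^d)$ is of the form $\textup{Thick}_{\otimes}(M)$ for a dualisable $M$, and the map in question sends it to the tuple $(\textup{Thick}_{\otimes}(M{\downarrow}_E^G))_{E}$; since the limit injects into the product, injectivity amounts to the following. If $M$ and $X$ are dualisable $kG$-modules with $\textup{Thick}_{\otimes}(M{\downarrow}_E^G) = \textup{Thick}_{\otimes}(X{\downarrow}_E^G)$ for every finite elementary abelian subgroup $E \leq G$, then $\textup{Thick}_{\otimes}(M) = \textup{Thick}_{\otimes}(X)$. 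By symmetry it is enough to prove $X \in \textup{Thick}_{\otimes}(M)$, and by \eqref{52} it suffices to produce an integer $n$ with $\xi_M^{\otimes n} \otimes X = 0$.

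First I would observe that restriction is tensor-triangulated and preserves the triangle \eqref{ximeq}, so $\xi_M{\downarrow}_E^G = \xi_{M{\downarrow}_E^G}$ for each subgroup $E$. As $X{\downarrow}_E^G \in \textup{Thick}_{\otimes}(M{\downarrow}_E^G)$, applying \eqref{52} in $\tacstab(kE)$ gives an integer $n_E$ with $(\xi_M^{\otimes n_E} \otimes X){\downarrow}_E^G = 0$. The point is then to make this uniform: by \Cref{thm:henn} there are only finitely many conjugacy classes of finite elementary abelian $p$-subgroups of $G$, and both $\xi_M{\downarrow}_E^G$ and $X{\downarrow}_E^G$ are unchanged up to isomorphism under conjugation, so taking $n$ to be the maximum of the $n_E$ over a set of conjugacy class representatives yields $(\xi_M^{\otimes n} \otimes X){\downarrow}_E^G = 0$ for every finite elementary abelian subgroup $E \leq G$.

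Finally I would globalise using \Cref{fiszero}. The morphism $g = \xi_M^{\otimes n} \otimes \textup{id}_X \colon C_M^{\otimes n} \otimes X \to X$ is a morphism between dualisable objects — $C_M$ is dualisable since it lies in the triangle \eqref{ximeq} with $k$ and $M \otimes M^*$ dualisable, and $X$ is dualisable by hypothesis — and it restricts to $0$ on every finite elementary abelian subgroup, so \Cref{fiszero} gives $g^{\otimes m} = 0$ for some $m$, i.e. $\xi_M^{\otimes nm} \otimes X^{\otimes m} = 0$. By \eqref{52} this says $X^{\otimes m} \in \textup{Thick}_{\otimes}(M)$, and since every object of $\tacstab(kG)^d$ is dualisable, thick tensor ideals of $\tacstab(kG)^d$ are radical by \cite[Remark~4.3]{Balmspec}, whence $X \in \textup{Thick}_{\otimes}(M)$. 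The main obstacle is the uniformity step: without Henn's bound on the number of conjugacy classes the integers $n_E$ need not be bounded, and one genuinely needs both halves of the hypothesis on $G$ — the finiteness of conjugacy classes (for the uniform $n$) and the $\textup{FP}_{\infty}$ condition (which underlies \Cref{fiszero}).
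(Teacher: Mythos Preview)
Your proof is correct and follows essentially the same route as the paper's own argument: reduce to showing $X\in\textup{Thick}_{\otimes}(M)$ via \eqref{52}, use that restriction carries $\xi_M$ to $\xi_{M{\downarrow}_E^G}$, invoke Henn's finiteness of conjugacy classes to obtain a uniform exponent, and then apply \Cref{fiszero} to globalise before using radicality. Your write-up is in fact slightly more explicit than the paper's in justifying why $C_M$ is dualisable (so that \Cref{fiszero} applies) and in citing \cite[Remark~4.3]{Balmspec} for the radical step.
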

\begin{proof}
    Suppose we have $M,N$ such that $\textup{Thick}_{\otimes}(M{\downarrow}_E^G) = \textup{Thick}_{\otimes}(N{\downarrow}_E^G)$ for each elementary abelian $E \leq G$. 
    \par 
    Let $X \in \textup{Thick}_{\otimes}(M)$. Then for any finite subgroup $F \leq G$, we have that $X{\downarrow}_F^G \in \textup{Thick}_{\otimes}(N{\downarrow}_F^G)$. Using \cite[Corollary~2.11]{BalmSurj} we know that $\xi_{N{\downarrow}_F^G}^{\otimes n}\otimes X{\downarrow}_F^G = 0$ for some $n \geq 1$.
    \par
    Restriction is a tensor-triangulated functor, and so we know that $\xi^{\otimes n}_{N{\downarrow}_F^G} = (\xi^{\otimes n}_{N}){\downarrow}_F^G$. In particular, we have that $0 = \xi^{\otimes n}_{N{\downarrow}_F^G}\otimes X{\downarrow}_F^G = (\xi^{\otimes n}_{N}\otimes X){\downarrow}_F^G$. 
    \par 
    Since there are only finite many conjugacy classes of finite elementary abelian subgroups, we may take some $m > 0$ such that $ (\xi^{\otimes m}_{N}\otimes X^{\otimes m}){\downarrow}_E^G = 0$ for each finite elementary abelian subgroup $E \leq G$. It follows from \Cref{fiszero} that $ \xi^{\otimes l}_{N}\otimes X^{\otimes l} = 0$ for some $l \geq 0$. 
    \par 
    This implies, again by \cite[Corollary~2.11]{BalmSurj}, that $X^{\otimes l} \in \textup{Thick}_{\otimes}(N)$, and so $X \in \textup{Thick}_{\otimes}(N)$. This shows the inclusion $\textup{Thick}_{\otimes}(M) \subseteq \textup{Thick}_{\otimes}(N)$. The other inclusion holds by swapping the roles of $M$ and $N$ in the above argument. 
\end{proof}

\begin{proposition}\label{prop2}
Suppose $G$ is an $\textup{H}_1\mathfrak{F}$ group of type $\textup{FP}_{\infty}$. Then the natural map $\textup{Thickid}^f(G^d) \to \lim\limits_{E \in \mathcal{A}_{\mathcal{E}}(G)} \textup{Thickid}^f(E^d)$ is surjective.
\end{proposition}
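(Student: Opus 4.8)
The plan is to reduce the statement, via the Benson--Carlson--Rickard classification \cite{bcrthick} over each finite elementary abelian subgroup, to a gluing problem for closed subsets of $\textup{Proj}(H^*(G,k))$, and then to solve that gluing problem using the Quillen-type homeomorphism of \Cref{cor:fisohomeo} together with the realisation result \Cref{carlsonzeta}.

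First I would unpack what an element of the limit is. Since the forgetful functor from distributive lattices to sets creates limits \cite[Lemma~3.14]{barthel2023descenttensortriangulargeometry}, such an element is a family $(\mathcal{T}_E)_{E\in\mathcal{E}(G)}$ with $\mathcal{T}_E = \textup{Thick}_{\otimes}(M_E)$ for a finitely generated $kE$-module $M_E$, compatible under the restriction and conjugation maps. By \cite{bcrthick} (together with Balmer's identification of the spectrum) the support map $\textup{Thick}_{\otimes}(M_E)\mapsto\textup{Supp}_E(M_E)$ is an isomorphism from $\textup{Thickid}^f(E^d)$ onto the lattice of closed subsets of $\textup{Proj}(H^*(E,k))$, and under this identification the transition maps become preimage maps along the morphisms $\textup{res}^*_{F,E}\colon\textup{Proj}(H^*(E,k))\to\textup{Proj}(H^*(F,k))$. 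So the data is equivalent to a compatible family of closed subsets $\mathcal{V}_E = \textup{Supp}_E(M_E)$, meaning $(\textup{res}^*_{F,E})^{-1}(\mathcal{V}_F) = \mathcal{V}_E$ whenever $E\leq F$, together with the analogous conjugation-equivariance.

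Next I would glue this family. By \Cref{cor:fisohomeo} the space $\textup{Proj}(H^*(G,k))$ is the colimit of the $\textup{Proj}(H^*(E,k))$ over $\mathcal{A}_{\mathcal{E}}(G)$ with the maps $\textup{res}^*_{G,E}$ as structure maps; in particular its topology is the final (colimit) topology. I set $\mathcal{V} = \bigcup_{E}\textup{res}^*_{G,E}(\mathcal{V}_E)$, which is a finite union since there are only finitely many conjugacy classes by \Cref{thm:henn}. The key point is that $(\textup{res}^*_{G,E})^{-1}(\mathcal{V}) = \mathcal{V}_E$ for every $E$: the inclusion $\supseteq$ is immediate from the definition, and for $\subseteq$ one uses that two points identified in the colimit are connected by a zig-zag of the diagram maps, along each of which membership in the family is preserved by the compatibility hypothesis. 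Since each $\mathcal{V}_E$ is closed, the final-topology description shows $\mathcal{V}$ is closed in $\textup{Proj}(H^*(G,k))$.

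Finally I would invoke \Cref{carlsonzeta} to produce a dualisable $kG$-module $M$ with $\textup{Supp}_G(M) = \mathcal{V}$, and claim that $\textup{Thick}_{\otimes}(M)\in\textup{Thickid}^f(G^d)$ maps to $(\mathcal{T}_E)_E$. By \Cref{supremark} and the naturality of finite-group support under restriction, the family $(\textup{Supp}_E(M{\downarrow}_E^G))_E$ is itself a compatible family of closed subsets whose glued subset is $\textup{Supp}_G(M) = \mathcal{V}$; since a compatible family is recovered from its glued subset by the identity $(\textup{res}^*_{G,E})^{-1}(\mathcal{V}) = \mathcal{V}_E$ established above, we get $\textup{Supp}_E(M{\downarrow}_E^G) = \mathcal{V}_E = \textup{Supp}_E(M_E)$ for all $E$, and then \cite{bcrthick} gives $\textup{Thick}_{\otimes}(M{\downarrow}_E^G) = \textup{Thick}_{\otimes}(M_E) = \mathcal{T}_E$. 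This proves surjectivity. I expect the main obstacle to be the middle step: verifying that a compatible family of closed subsets really does descend to a closed subset of $\textup{Proj}(H^*(G,k))$ recovered by pullback, i.e. the careful bookkeeping with the colimit and its identifications. Everything else is either standard (the Benson--Carlson--Rickard classification and the naturality of support) or already in hand (\Cref{carlsonzeta}, \Cref{cor:fisohomeo}).
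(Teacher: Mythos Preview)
Your argument is correct and follows the same overall architecture as the paper's proof: translate the limit element into a compatible family of closed subsets via the Benson--Carlson--Rickard classification, glue to a closed subset $\mathcal{V}\subseteq\textup{Proj}(H^*(G,k))$, realise $\mathcal{V}$ as the support of a dualisable module via \Cref{carlsonzeta}, and then verify that this module restricts correctly. The differences lie in how two of these steps are executed.

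For closedness of $\mathcal{V}$, you use the final-topology description of the colimit from \Cref{cor:fisohomeo} together with the identity $(\textup{res}^*_{G,E})^{-1}(\mathcal{V})=\mathcal{V}_E$; the paper instead observes that each $\textup{res}^*_{G,E}$ is a closed map (since $H^*(E,k)$ is module-finite over $H^*(G,k)$ by \Cref{thm:henn}) and that the union is finite. For the verification that $\textup{Thick}_{\otimes}(M{\downarrow}_F^G)=\mathcal{T}_F$, you argue purely topologically: the family $(\textup{Supp}_E(M{\downarrow}_E^G))_E$ is compatible by naturality of support under restriction between elementary abelian groups, and a compatible family is recovered from its glued subset by pullback. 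The paper takes a more algebraic detour: it first lifts to localising tensor ideals via \Cref{newsix}, then uses the Mackey formula and \Cref{isol} to compute $\locm(X{\downarrow}_F^G)$ explicitly, and finally descends back to thick ideals via Neeman--Thomason. Your route is more conceptual and avoids the Mackey bookkeeping; the paper's route stays closer to the module-theoretic machinery already set up and does not appeal to the (standard but uncited in the paper) naturality statement $\textup{Supp}_E(N{\downarrow}_E^F)=(\textup{res}^*_{F,E})^{-1}(\textup{Supp}_F(N))$ for $E\leq F$ elementary abelian. Both are valid; yours is arguably cleaner, while the paper's is more self-contained within its own framework.
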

\begin{proof}
    Suppose we have an element of the limit $(\textup{Thick}_{\otimes}(N_E))_{E \in \mathcal{E}(G)}$, where each $N_E$ is a finitely generated $kE$-module. We then consider the subset $\mathcal{V}\subseteq \textup{Proj}(H^*(G,k))$ given by the union over conjugacy classes of all finite elementary abelian subgroups $E \leq G$ of $\textup{res}^*_{G,E}(\textup{Supp}_E(N_E))$. Note that $H^*(E,k)$ is finitely generated as a module over $\lim\limits_{F \in \mathcal{A}_{\mathcal{E}}(G)} H^*(F,k)$ and hence by the $F$-isomorphism of \Cref{thm:henn}, we see that $H^*(E,k)$ is finitely generated as a module over $H^*(G,k)$ via the restriction map. Finite morphisms are closed and so this implies that $\textup{res}^*_{G,E}$ is a closed map. Using that there are only a finite number of conjugacy classes of finite elementary abelian subgroups, we conclude that $\mathcal{V}$ is a closed subset. 
    \par 
    From \Cref{carlsonzeta} we know that we can find a dualisable module $X$ such that $\textup{Supp}_G(X) = \mathcal{V}$. Note that by \Cref{inducedsupport} we know that $\textup{Supp}_G(\bigoplus\limits_{E \in \mathcal{E}(G)} N_E{\uparrow}_E^G) = \mathcal{V}$ and so by the classification of localising tensor ideals as in \Cref{newsix}, we know that $\locm(X) = \locm(\bigoplus\limits_{E \in \mathcal{E}(G)} N_E{\uparrow}_E^G)$. 
    \par 
    Let $F \leq G$ be an elementary abelian subgroup. 
    We know from \Cref{isol} that $\locm(X{\downarrow}_F^G) = \locm(\bigoplus\limits_{E \in \mathcal{E}(G)} N_E{\uparrow}_E^G{\downarrow}_F^G)$. 
    \par 
    We claim that $\locm(\bigoplus\limits_{E \in \mathcal{E}(G)} N_E{\uparrow}_E^G{\downarrow}_F^G)=\locm(N_F)$. The Mackey double coset formula shows that $\locm(N_F) \subseteq \locm(\bigoplus\limits_{E \in \mathcal{E}(G)} N_E{\uparrow}_E^G{\downarrow}_F^G)$ and so we are left to show the other inclusion.
    \par 
    For any $E \in \mathcal{E}(G)$, consider the module $N_E{\uparrow}_E^G{\downarrow}_F^G \cong \bigoplus\limits_{g \in [F\backslash G/E]} {}^gN_E{\downarrow}_{F^g\cap E}^E{\uparrow}_{F\cap {}^gE}^F$. Since we took an element of the limit originally, we know that \[\locm(N_E{\downarrow}_{F^g\cap E}^E) = \locm(N_{F^g\cap E})\]
    and
    \[\locm({}^gN_{F^g\cap E}) = \locm(N_{F\cap {}^gE})\]
    Furthermore, using the fact that $\locm(N_{F\cap {}^gE}) = \locm(N_F{\downarrow}_{F\cap {}^gE}^F)$ we see that 
    \[\locm(N_{F\cap{}^gE}{\uparrow}_{F\cap{}^gE}^F) = \locm(N_F{\downarrow}_{F\cap{}^gE}^F{\uparrow}_{F\cap{}^gE}^F)\]
    From this, we conclude that $\locm({}^gN_E{\downarrow}_{F^g\cap E}^E{\uparrow}_{F\cap{}^gE}^F) = \locm(N_F{\downarrow}_{F\cap{}^gE}^F{\uparrow}_{F\cap{}^gE}^F) \subseteq \locm(N_F)$. 
    \par 
    Altogether, this shows that $\bigoplus\limits_{g\in[F\backslash G/E]} {}^gN_E{\downarrow}_{F^g\cap E}^E{\uparrow}_{F\cap {}^gE}^F \in \locm(N_F)$ as desired. This is true for all $E \in \mathcal{E}(G)$ and so we have shown that $\locm(\bigoplus\limits_{E \in \mathcal{E}(G)} N_E{\uparrow}_E^G{\downarrow}_F^G)=\locm(N_F)$. 
    \par Summarising, we have shown that we have a dualisable object $X$ such that $\locm(X{\downarrow}_F^G) = \locm(N_F)$ for each elementary abelian subgroup $F \leq G$. We know that $X{\downarrow}_F^G$ and $N_F$ are all dualisable and hence compact objects, and so we may apply the Neeman-Thomason localisation theorem as in \cite[Corollary~2.1]{HallRydh} to see that $\textup{Thick}_{\otimes}(X{\downarrow}_F^G)=\textup{Thick}_{\otimes}(N_F)$, from which surjectivity follows.
\end{proof}
Stone duality now allows us to determine the Balmer spectrum immediately. 
\begin{theorem}\label{BALMER}
    The Balmer spectrum of $\tacstab(kG)^d$ is homeomorphic to $\underset{{E \in \mathcal{A}_{\mathcal{E}}(G)}}{\textup{colim}}\textup{Proj}(H^*(E,k))$, where the colimit is taken in the category of spectral spaces.
\end{theorem}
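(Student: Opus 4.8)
The plan is to deduce the theorem formally from \Cref{prop1}, \Cref{prop2}, and Stone duality; the substantive work is already contained in those two propositions, so what remains is essentially bookkeeping.

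First, \Cref{prop1} and \Cref{prop2} together show that the restriction-induced map $\textup{Thickid}^f(G^d) \to \lim_{E \in \mathcal{A}_{\mathcal{E}}(G)} \textup{Thickid}^f(E^d)$ is a bijection of sets. Since the forgetful functor from distributive lattices to sets creates limits by \cite[Lemma~3.14]{barthel2023descenttensortriangulargeometry}, this bijection is automatically an isomorphism in $\mathbf{DLat}$; equivalently, $\textup{Thickid}^f(G^d)$ is the limit in $\mathbf{DLat}$ of the diagram $\mathcal{A}_{\mathcal{E}}(G)^{op} \to \mathbf{DLat}$, $E \mapsto \textup{Thickid}^f(E^d)$, with projection maps those induced by the restriction functors $\tacstab(kG)^d \to \tacstab(kE)^d$ (which preserve dualisable objects since restriction is monoidal).

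Next I would feed this through Stone duality. The assignment $\mathscr{T} \mapsto \textup{Spc}(\mathscr{T})$ is, by definition, the composite of $\mathscr{T} \mapsto \textup{Thickid}^f(\mathscr{T})$ with a fixed contravariant equivalence $\mathbf{DLat} \to \mathbf{Spec}$ (Stone duality of \Cref{Stonedual} precomposed with the covariant self-equivalence $(-)^{op}$ of $\mathbf{DLat}$). A contravariant equivalence sends limits to colimits, so it both guarantees that $\textup{colim}_{E \in \mathcal{A}_{\mathcal{E}}(G)} \textup{Spc}(\tacstab(kE)^d)$ exists in $\mathbf{Spec}$ and identifies it with the image of the limit above. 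Applying it to the isomorphism from the previous paragraph therefore produces a homeomorphism $\textup{Spc}(\tacstab(kG)^d) \cong \textup{colim}_{E \in \mathcal{A}_{\mathcal{E}}(G)} \textup{Spc}(\tacstab(kE)^d)$, under which the structure maps of the colimit are the maps on Balmer spectra induced by restriction.

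Finally, for a finite elementary abelian $p$-group $E$ the dualisable objects of $\tacstab(kE)$ are precisely the objects of the ordinary finitely generated stable module category, so the thick subcategory theorem of Benson, Carlson, and Rickard \cite{bcrthick} identifies $\textup{Spc}(\tacstab(kE)^d)$ with $\textup{Proj}(H^*(E,k))$ compatibly with the maps induced by inclusions and conjugations; substituting this into the colimit yields the claimed homeomorphism. The only delicate point in this argument is keeping the various opposite-category conventions straight — $\textup{Spc}$ is built from $\textup{Thickid}^f(-)^{op}$, the indexing category is $\mathcal{A}_{\mathcal{E}}(G)^{op}$, and Stone duality is itself contravariant — so as to be certain that the limit of distributive lattices really becomes the asserted colimit of spectral spaces with the restriction-induced structure maps, rather than a diagram with its arrows reversed.
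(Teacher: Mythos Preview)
Your proposal is correct and follows essentially the same route as the paper: combine \Cref{prop1} and \Cref{prop2} to obtain an isomorphism of distributive lattices $\textup{Thickid}^f(G^d) \cong \lim_{E} \textup{Thickid}^f(E^d)$, then apply Stone duality (\Cref{Stonedual}) to convert this limit into the asserted spectral colimit. You are in fact more explicit than the paper's two-line proof in spelling out the Benson--Carlson--Rickard identification $\textup{Spc}(\tacstab(kE)^d)\cong\textup{Proj}(H^*(E,k))$ and in tracking the variance conventions, both of which the paper leaves implicit; one small remark is that the passage from ``bijection of sets'' to ``isomorphism in $\mathbf{DLat}$'' is really just \Cref{reflectingthings}(iii) (a bijective lattice homomorphism has a lattice-homomorphism inverse) rather than a consequence of limit creation per se.
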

\begin{proof}
    We have shown in \Cref{prop1} and \Cref{prop2} that the map $\textup{Thickid}^f(G^d) \to \lim\limits_{E \in \mathcal{A}_{\mathcal{E}}(G)} \textup{Thickid}^f(E^d)$ is an isomorphism of distributive lattices. Stone duality as in \Cref{Stonedual} now allows us to conclude. 
\end{proof}
\begin{corollary}\label{bspec2}
    The Balmer spectrum of $\tacstab(kG)^d$ is homeomorphic to $\underset{{E \in \mathcal{A}_{\mathcal{E}}(G)}}{\textup{colim}}\textup{Proj}(H^*(E,k))$, where the colimit is taken in the category of topological spaces. 
\end{corollary}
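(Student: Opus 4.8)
The plan is to obtain \Cref{bspec2} from \Cref{BALMER} by showing that, for the diagram $E \mapsto \textup{Proj}(H^*(E,k))$ indexed by $\mathcal{A}_{\mathcal{E}}(G)$, the colimit taken in $\mathbf{Spec}$ agrees with the colimit taken in $\mathbf{Top}$. The two facts that make this work are already available: by \Cref{thm:henn} there are only finitely many conjugacy classes of finite elementary abelian $p$-subgroups, and since conjugations are isomorphisms in $\mathcal{A}_{\mathcal{E}}(G)$, the full subcategory on a set of conjugacy representatives is a finite category equivalent to $\mathcal{A}_{\mathcal{E}}(G)$, so the colimit is really a finite colimit; and by \Cref{cor:fisohomeo} the topological colimit is homeomorphic to $\textup{Proj}(H^*(G,k))$, which is a Noetherian, hence spectral, space because $H^*(G,k)$ is a finitely generated $k$-algebra by \Cref{thm:henn}. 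Under this identification the canonical maps out of the pieces are the restriction maps $\textup{res}^*_{G,E}$, and each of these is a finite morphism of schemes (as $H^*(E,k)$ is module-finite over $H^*(G,k)$ via restriction, again by \Cref{thm:henn}), hence spectral.

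Granting this, I would argue directly that the topological colimit satisfies the universal property defining the colimit in $\mathbf{Spec}$. Given a spectral space $Y$ and a compatible family of spectral maps $g_E \colon \textup{Proj}(H^*(E,k)) \to Y$, the topological colimit property produces a unique continuous map $f \colon \textup{Proj}(H^*(G,k)) \to Y$ compatible with the $g_E$; this $f$ is automatically spectral, since for a quasi-compact open $V \subseteq Y$ the preimage $f^{-1}(V)$ is the union, over the finitely many conjugacy representatives $E$, of the images in $\textup{Proj}(H^*(G,k))$ of the quasi-compact opens $g_E^{-1}(V)$, and hence is quasi-compact. Therefore the canonical comparison map from the topological colimit to the spectral colimit of \Cref{BALMER} is an isomorphism in $\mathbf{Spec}$, in particular a homeomorphism, and \Cref{BALMER} then gives the statement.

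The only points requiring a little care, none of them a genuine obstacle, are verifying that $\mathcal{A}_{\mathcal{E}}(G)$ is equivalent to a finite category (one checks that between any two elementary abelian subgroups there are finitely many morphisms, each being an injection composed with a conjugation coset, both finite in number) and that the maps $\textup{res}^*_{G,E}$ are spectral, for which module-finiteness over $H^*(G,k)$ suffices. The conceptual point to keep in sight is that the equality of the two colimits is not formal: it relies essentially on the diagram being finite, so that a finite union of quasi-compact sets is again quasi-compact; for an infinite diagram of spectral spaces the topological colimit need not even be spectral.
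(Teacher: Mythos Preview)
Your argument is correct and reaches the same conclusion as the paper, but by a different route. The paper rewrites the colimit over $\mathcal{A}_{\mathcal{E}}(G)$ as a coequalizer of two maps between finite coproducts $\bigsqcup \textup{Proj}(H^*(E,k))$, invokes Hochster's result that finite spectral coproducts agree with topological ones, notes the resulting coproducts are noetherian, and then applies \cite[Proposition~4.12]{barthel2023descenttensortriangulargeometry}, which says the comparison map from the topological to the spectral coequalizer is a homeomorphism precisely when the topological coequalizer is $T_0$; this last condition follows from \Cref{cor:fisohomeo}. You instead verify the universal property of the spectral colimit directly on $\textup{Proj}(H^*(G,k))$, using that the structure maps $\textup{res}^*_{G,E}$ are spectral and that the induced continuous map $f$ out of the colimit is automatically spectral. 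Your approach is more self-contained, avoiding the external criterion, at the cost of checking the universal property by hand; the paper's approach packages the comparison into a single citable statement.

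One small simplification you could make: your finite-union argument for quasi-compactness of $f^{-1}(V)$ is valid, but unnecessary. Since $\textup{Proj}(H^*(G,k))$ is noetherian (as you already observed via \Cref{thm:henn}), every open subset is quasi-compact, and hence \emph{every} continuous map from it to a spectral space is spectral. This also subsumes your verification that the maps $\textup{res}^*_{G,E}$ are spectral, so the module-finiteness of $H^*(E,k)$ over $H^*(G,k)$ is not actually needed at this step.
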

\begin{proof}
    We first note that since there are only finitely many conjugacy classes of elementary abelian subgroups by \Cref{thm:henn}, the category $\mathcal{A}_{\mathcal{E}}(G)$ is equivalent to a finite category. The spectral colimit is therefore a coequalizer of the form $\begin{tikzcd}
        \bigsqcup\textup{Proj}(H^*(E,k)) \arrow[r,shift left=.75ex]\arrow[r,shift right=.75ex]& \bigsqcup\textup{Proj}(H^*(E,k)),\end{tikzcd}$ where each spectral coproduct is finite. By \cite[Theorem~7]{Hochster}, we know that finite spectral coproducts may be calculated as the topological coproduct. Since $\textup{Proj}(H^*(E,k))$ is noetherian for a finite elementary abelian $p$-group $E$, we conclude that each of these coproducts are also noetherian. 
    \par 
    It is shown in \cite[Proposition~4.12]{barthel2023descenttensortriangulargeometry} that under these conditions, the map from the topological coequalizer to the spectral coequalizer (or its image under the forgetful functor) is a homeomorphism if and only if the topological coequalizer is a $T_0$ space. However, the topological coequalizer is homeomorphic to $\textup{Proj}(H^*(G,k))$ and hence is $T_0$. From this, it follows that the spectral colimit is homeomorphic to the topological colimit, and the result is now immediate from \Cref{BALMER}. 
\end{proof}
\subsection{The telescope conjecture for $\mathrm{H}_1\mathfrak{F}$ groups of type $\mathrm{FP}_{\infty}$}
We can now show that the telescope conjecture does hold for $\mathrm{H}_1\mathfrak{F}$ groups of type $\mathrm{FP}_{\infty}$. 
\begin{proposition}\label{rigidgenerate}
    Suppose $G$ is an $\textup{H}_1\mathfrak{F}$ group of type $\textup{FP}_{\infty}$. Let $\mathcal{L}$ be a localising tensor ideal generated by modules of the form $M{\uparrow}_E^G$ for $M$ a finitely generated $kE$-module and $E \leq G$ a finite elementary abelian subgroup. Then $\mathcal{L}$ is generated as a localising tensor ideal by a set of dualisable objects. 
\end{proposition}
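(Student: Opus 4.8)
The plan is to combine the classification of localising tensor ideals by support with Carlson's $L_\zeta$ construction. Write $\mathcal{L} = \locm(\{M_i{\uparrow}_{E_i}^G\}_{i \in I})$, where each $E_i \leq G$ is a finite elementary abelian subgroup and each $M_i$ is a finitely generated $kE_i$-module. First I would observe that each generator has \emph{closed} support in $\textup{Proj}(H^*(G,k))$. Indeed, $\textup{Supp}_{E_i}(M_i)$ is a Zariski-closed subset of $\textup{Proj}(H^*(E_i,k))$ because $M_i$ is finitely generated, and --- exactly as in the proof of \Cref{prop2} --- Henn's theorem \Cref{thm:henn} shows that $H^*(E_i,k)$ is finitely generated as a module over $H^*(G,k)$ via the restriction map, so that $\rho^*_{E_i}$ is a closed map. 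By \Cref{inducedsupport} it follows that $\mathcal{V}_i := \textup{Supp}_G(M_i{\uparrow}_{E_i}^G) = \rho^*_{E_i}(\textup{Supp}_{E_i}(M_i))$ is closed in $\textup{Proj}(H^*(G,k))$ (if $M_i$ is projective then $\mathcal{V}_i = \emptyset$, which causes no trouble).

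Next I would apply \Cref{carlsonzeta} to each closed subset $\mathcal{V}_i$ to obtain a dualisable object $X_i$ with $\textup{Supp}_G(X_i) = \mathcal{V}_i$; the set $\{X_i\}_{i\in I}$ will be the required set of dualisable generators. To see this, additivity of support \Cref{prop:supportprop}(iv) gives
\[\textup{Supp}_G\Big(\bigoplus_{i\in I}X_i\Big) = \bigcup_{i\in I}\mathcal{V}_i = \bigcup_{i\in I}\textup{Supp}_G(M_i{\uparrow}_{E_i}^G) = \textup{Supp}_G\Big(\bigoplus_{i\in I}M_i{\uparrow}_{E_i}^G\Big),\]
so by the support classification \Cref{newsix} we obtain $\locm\big(\bigoplus_{i}X_i\big) = \locm\big(\bigoplus_i M_i{\uparrow}_{E_i}^G\big)$. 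Since localising tensor ideals are closed under coproducts and summands, the left-hand side equals $\locm(\{X_i\}_{i\in I})$ and the right-hand side equals $\mathcal{L}$, whence $\mathcal{L} = \locm(\{X_i\}_{i\in I})$ is generated by the set of dualisable objects $\{X_i\}_{i\in I}$.

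The hard part --- or rather the only part with any real content --- is the closedness claim of the first paragraph, since this is where the hypotheses that $G$ is $\textup{H}_1\mathfrak{F}$ of type $\textup{FP}_\infty$ enter, through Henn's theorem and the resulting finiteness of $H^*(G,k)$. Once one knows that the support of a module induced from a finitely generated module on a finite elementary abelian subgroup is closed in $\textup{Proj}(H^*(G,k))$, everything else is a formal manipulation of the support dictionary established earlier. In particular, note that neither the finiteness of the index set $I$ nor the finiteness of the set of conjugacy classes of finite elementary abelian $p$-subgroups is needed for this argument --- we are allowed to output an arbitrarily large set of dualisable generators.
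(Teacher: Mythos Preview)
Your proof is correct and follows essentially the same approach as the paper: both use \Cref{inducedsupport} together with the closedness of the restriction maps (via Henn's theorem) to see that each generator has closed support, then invoke \Cref{carlsonzeta} to realise each closed piece as the support of a dualisable object, and finally appeal to the support classification \Cref{newsix}. The paper's proof is more terse (it simply asserts that $\mathcal{L}$ has specialisation closed support and that this suffices by \Cref{carlsonzeta}), but your version spells out the same steps more explicitly.
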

\begin{proof}
     We see from \Cref{inducedsupport} that $\mathcal{L}$ must have specialisation closed support. Therefore, it suffices to show that every closed subset of $\textup{Proj}(H^*(G,k))$ is the support of a dualisable object, which we have already shown in \Cref{carlsonzeta}. 
\end{proof}
\begin{corollary}\label{tel1}
Let $G$ be an $\textup{H}_1\mathfrak{F}$ group of type $\textup{FP}_{\infty}$. Then every smashing ideal is generated by a set of dualisable objects.
\end{corollary}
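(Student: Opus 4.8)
The plan is to deduce this immediately from the classification of smashing ideals in Section~4 together with \Cref{rigidgenerate}. First I would invoke \Cref{smagenco}: for any $\lhf$ group, and in particular for our $G$, every smashing ideal $\mathcal{S}$ of $\tacstab(kG)$ is generated as a localising tensor ideal by a set of (compact) modules of the form $M{\uparrow}_E^G$, where $E \leq G$ ranges over finite elementary abelian subgroups and $M$ is a finitely generated $kE$-module. Recall that this step is where the telescope conjecture for finite groups enters, via \Cref{smpr} applied to $\textup{Ind}^{-1}_{G,E}(\mathcal{S})$.

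Now the hypotheses of \Cref{rigidgenerate} are satisfied by such an $\mathcal{S}$, so \Cref{rigidgenerate} tells us that $\mathcal{S}$ is in fact generated, as a localising tensor ideal, by a set of dualisable objects. Stringing the two statements together gives the corollary. Concretely, the generating set produced is obtained by realising the (specialisation-closed) support of $\mathcal{S}$ as a union of closed subsets of $\textup{Proj}(H^*(G,k))$ and applying \Cref{carlsonzeta} to each; since there are only finitely many conjugacy classes of finite elementary abelian $p$-subgroups by \Cref{thm:henn} and $H^*(G,k)$ is noetherian, this is genuinely a set.

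I do not expect an obstacle here: all of the work has already been done in the proofs of \Cref{smagenco} (which reduces to the finite-group telescope conjecture) and \Cref{rigidgenerate} (which rests on \Cref{inducedsupport} and the $L_\zeta$-type construction of \Cref{carlsonzeta}), so the corollary itself is a one-line combination. The only point worth a remark is that \Cref{smagenco} gives \emph{compact} generators while the corollary asks for \emph{dualisable} ones — and these differ for $\textup{H}_1\mathfrak{F}$ groups of type $\textup{FP}_\infty$ — so the content is precisely that \Cref{rigidgenerate} lets us trade the induced compact generators for dualisable generators with controlled thick tensor ideal closures, which is exactly what the subsequent proof of the telescope conjecture will need.
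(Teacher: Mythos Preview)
Your proposal is correct and matches the paper's own proof essentially line for line: the paper also simply invokes \Cref{smagenco} to get generators of the form $M{\uparrow}_E^G$ and then applies \Cref{rigidgenerate} to upgrade these to dualisable generators. Your additional commentary about the underlying mechanism (specialisation-closed support, \Cref{carlsonzeta}, finiteness from \Cref{thm:henn}) is accurate but goes beyond what the paper spells out at this point.
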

\begin{proof}
    Let $\mathcal{S}$ be smashing. From \Cref{smagenco} we know that $\mathcal{S}$ is generated by objects of the form $M{\uparrow}_E^G$, where $E\leq G$ is a finite elementary abelian subgroup and $M$ is a finitely generated $kE$-module. Then from \Cref{rigidgenerate} we see that in fact $\mathcal{S}$ is generated by a set of dualisable objects as desired. 
\end{proof}
\begin{theorem}\label{telcon}
    Let $G$ be an $\textup{H}_1\mathfrak{F}$ group of type $\textup{FP}_{\infty}$. Then a localising tensor ideal of $\tacstab(kG)$ is smashing if and only if it is generated by a set of dualisable objects.
\end{theorem}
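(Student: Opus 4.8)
The plan is to prove the two implications separately; both are essentially already in hand from earlier in the section. For the forward direction --- a smashing ideal is generated by a set of dualisable objects --- there is nothing to add, since this is exactly \Cref{tel1}, which combines \Cref{smagenco} (every smashing ideal is generated by induced modules $M{\uparrow}_E^G$ with $M$ finitely generated) with \Cref{rigidgenerate} (such ideals have specialisation-closed support, and every closed subset of $\textup{Proj}(H^*(G,k))$ is realised as the support of a dualisable object via the $L_\zeta$-construction of \Cref{carlsonzeta}).

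For the converse, suppose $\mathcal{L}$ is a localising tensor ideal with $\mathcal{L} = \locm(\mathcal{D})$ for some set $\mathcal{D}$ of dualisable objects. The idea is to exhibit $\mathcal{L}$ as the image under the telescope map of a thick tensor ideal of $\tacstab(kG)^d$, so that \Cref{tel2} applies directly. Set $\mathcal{T} = \textup{Thick}_{\otimes}(\mathcal{D})$, the thick tensor ideal of $\tacstab(kG)^d$ generated by $\mathcal{D}$ (this makes sense as $\mathcal{D}\subseteq\tacstab(kG)^d$ and the latter is tensor-triangulated). The next step is to check $\locm(\mathcal{T}) = \locm(\mathcal{D})$: the inclusion $\locm(\mathcal{D})\subseteq\locm(\mathcal{T})$ is clear since $\mathcal{D}\subseteq\mathcal{T}$, while for the reverse, $\locm(\mathcal{D})$ is a localising tensor ideal containing $\mathcal{D}$ and hence is closed under suspension, cones, retracts and --- being a tensor ideal --- under tensoring with all objects, in particular all dualisable ones; these are exactly the operations generating $\mathcal{T}$ from $\mathcal{D}$, so $\mathcal{T}\subseteq\locm(\mathcal{D})$ and therefore $\locm(\mathcal{T})\subseteq\locm(\mathcal{D})$. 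Now \Cref{tel2} gives that $\locm(\mathcal{T})$ is smashing, hence $\mathcal{L}$ is smashing.

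A more hands-on alternative for the converse, avoiding $\mathcal{T}$ entirely, is to repeat the computation in the proof of \Cref{tel2}: using \Cref{lop} and the adjunction $-\otimes D \dashv D^{*}\otimes -$ for dualisable $D$, one obtains $\locm(\mathcal{D})^{\perp} = \bigcap_{D\in\mathcal{D}}\ker(D^{*}\otimes -)$, and the right-hand side is manifestly a localising tensor ideal; since $\locm(\mathcal{D})$ is Bousfield (by the corollary to \Cref{genbyobj}), condition (i) of \Cref{smashingdefinition} gives that it is smashing. I do not expect a genuine obstacle here: the substantive content lies entirely in \Cref{tel1} (and so ultimately in Henn's theorem \Cref{thm:henn} and the $L_\zeta$-modules) for the hard direction, whereas the converse is formal. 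The one point deserving a line of care is the identification $\locm(\textup{Thick}_{\otimes}(\mathcal{D})) = \locm(\mathcal{D})$ --- that passing from a set of dualisable generators to the thick tensor ideal they generate inside $\tacstab(kG)^d$ does not enlarge the localising tensor ideal they generate.
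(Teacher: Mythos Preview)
Your proof is correct and follows essentially the same route as the paper, which simply cites \Cref{tel2} and \Cref{tel1} for the two directions. Your additional care in verifying $\locm(\textup{Thick}_{\otimes}(\mathcal{D})) = \locm(\mathcal{D})$ to connect a generating set of dualisable objects to the hypothesis of \Cref{tel2} is a reasonable elaboration of what the paper leaves implicit.
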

\begin{proof}
    This follows immediately from \Cref{tel2} and \Cref{tel1}.
\end{proof}
\begin{corollary}\label{TELHERE}
    Let $G$ be an $\textup{H}_1\mathfrak{F}$ group of type $\textup{FP}_{\infty}$. Then the natural map $\textup{Thickid}(G^d) \to \textup{Smashid}(G)$ is bijective and so the telescope conjecture holds for $G$. 
\end{corollary}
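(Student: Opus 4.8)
The plan is to realise the natural map $\Phi\colon \textup{Thickid}(G^d)\to\textup{Smashid}(G)$, $\mathcal{T}\mapsto\locm(\mathcal{T})$, which is well defined by \Cref{tel2}, as a bijection whose inverse sends a smashing ideal $\mathcal{S}$ to $\mathcal{S}\cap\tacstab(kG)^d$.

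\textbf{Surjectivity.} This is the easy half. Given a smashing ideal $\mathcal{S}$, \Cref{tel1} supplies a set $\mathcal{D}$ of dualisable objects with $\mathcal{S}=\locm(\mathcal{D})$. I would take $\mathcal{T}=\textup{Thick}_{\otimes}(\mathcal{D})$, the thick tensor ideal of $\tacstab(kG)^d$ generated by $\mathcal{D}$ (all of whose members are dualisable, since dualisable objects are closed under the thick-tensor-ideal operations), and observe that $\mathcal{D}\subseteq\textup{Thick}_{\otimes}(\mathcal{D})\subseteq\locm(\mathcal{D})$ gives $\Phi(\mathcal{T})=\locm(\textup{Thick}_{\otimes}(\mathcal{D}))=\locm(\mathcal{D})=\mathcal{S}$.

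\textbf{Injectivity.} This is the substantive part. The key claim is that $\mathcal{T}=\locm(\mathcal{T})\cap\tacstab(kG)^d$ for every $\mathcal{T}\in\textup{Thickid}(G^d)$; granting it, $\locm(\mathcal{T}_1)=\locm(\mathcal{T}_2)$ forces $\mathcal{T}_1=\mathcal{T}_2$. The inclusion $\subseteq$ is clear. For $\supseteq$, let $X$ be a dualisable object lying in $\locm(\mathcal{T})$; note $X$ is compact, since the tensor unit is. For each finite elementary abelian $E\leq G$, \Cref{isol} gives $X{\downarrow}_E^G\in\locm(\mathcal{T}{\downarrow}_E^G)$, and since $\tacstab(kE)$ is rigidly-compactly generated and both $X{\downarrow}_E^G$ and the objects of $\mathcal{T}{\downarrow}_E^G$ are compact, the Neeman--Thomason localisation theorem (e.g.\ \cite[Corollary~2.1]{HallRydh}) yields $X{\downarrow}_E^G\in\textup{Thick}_{\otimes}(\mathcal{T}{\downarrow}_E^G)$. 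As there are only finitely many conjugacy classes of such $E$ by \Cref{thm:henn}, I can then choose a single $N\in\mathcal{T}$ (a finite direct sum of members of $\mathcal{T}$) with $\textup{Thick}_{\otimes}(X{\downarrow}_E^G)\subseteq\textup{Thick}_{\otimes}(N{\downarrow}_E^G)$ for every finite elementary abelian $E\leq G$. Finally, \Cref{prop1} asserts that the lattice homomorphism $\textup{Thickid}^f(G^d)\to\lim\limits_{E\in\mathcal{A}_{\mathcal{E}}(G)}\textup{Thickid}^f(E^d)$ is injective, hence order-reflecting, so $\textup{Thick}_{\otimes}(X)\subseteq\textup{Thick}_{\otimes}(N)\subseteq\mathcal{T}$ and therefore $X\in\mathcal{T}$.

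\textbf{The main obstacle} is precisely this injectivity: unlike the rigidly-compactly generated situation of \cite{BalmFavi}, here $\tacstab(kG)^d$ is a \emph{proper} subcategory of $\tacstab(kG)^c$, so one cannot recover $\mathcal{T}$ from $\locm(\mathcal{T})$ by a single application of Neeman--Thomason; the descent through finite elementary abelian subgroups, reassembled via \Cref{prop1} and the finiteness statement in \Cref{thm:henn}, is what takes its place. Putting the two halves together shows $\Phi$ is a bijection, which is exactly the telescope conjecture for $G$.
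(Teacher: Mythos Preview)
Your proof is correct, and the surjectivity half matches the paper's. For injectivity, however, you take a genuinely different route. The paper argues via support: from $\locm(\mathcal{T}_1)=\locm(\mathcal{T}_2)$ it deduces $\textup{Supp}_G(\mathcal{T}_1)=\textup{Supp}_G(\mathcal{T}_2)$ using \Cref{newsix}, and then invokes the full Balmer spectrum computation \Cref{bspec2} (which rests on both \Cref{prop1} and \Cref{prop2}) to conclude $\mathcal{T}_1=\mathcal{T}_2$. Your argument instead establishes the stronger statement $\mathcal{T}=\locm(\mathcal{T})\cap\tacstab(kG)^d$ directly, by descending to finite elementary abelian subgroups via \Cref{isol}, applying Neeman--Thomason there, and then reassembling using only the injectivity half \Cref{prop1} together with the finiteness from \Cref{thm:henn}. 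The upshot is that your proof is more economical in its prerequisites (it avoids \Cref{prop2}, the support machinery of \Cref{newsix}, and the spectrum identification), and it exhibits the explicit inverse $\mathcal{S}\mapsto\mathcal{S}\cap\tacstab(kG)^d$; the paper's version is terser because it can simply quote the spectrum result already in hand.
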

\begin{proof}
    It follows from \Cref{telcon} that the map is surjective. Suppose that $\mathcal{T}_1,\mathcal{T}_2$ are thick ideals of $\tacstab(kG)^d$ such that $\locm(\mathcal{T}_1) = \locm(\mathcal{T}_2)$. From \Cref{newsix} we know that $\textup{Supp}_G(\mathcal{T}_1) = \textup{Supp}_G(\mathcal{T}_2)$. Now, the computation of the Balmer spectrum in \Cref{bspec2} implies that $\mathcal{T}_1 = \mathcal{T}_2$, which shows injectivity, and hence bijectivity, of the map in the statement.
\end{proof}
\bibliographystyle{amsplain}
\bibliography{specref} 

\end{document}